\newtheorem{theorem}{Theorem}[section]
\newtheorem{proposition}[theorem]{Proposition}
\newtheorem{lemma}[theorem]{Lemma}
\newtheorem{condition}[theorem]{C}
\newtheorem{proof}{\textmd{\textit{Proof.}}}
\newtheorem{remark}[theorem]{Remark}
\newtheorem{definition}[theorem]{Definition}
\newcommand{\qedd}{\hfill \Box}
\newcommand{\R}{\ensuremath{\mathbb{R}}}
\newcommand{\Sph}{\ensuremath{\mathbb{S}}}
\newcommand{\bbC}{\ensuremath{\mathbb{C}}}
\newcommand{\E}{\ensuremath{\mathbb{E}}}
\title{Generalized Finsler structures on closed 3-manifolds
\footnote{
Mathematics Subject Classification (2000)\,:\,
 Primary 53C60; 
 Secondary 53D35
.}
\footnote{
Key words and phrases: Generalized Finsler manifolds, taut contact circles, contact topology.}
}
\author{Sorin V. Sabau \footnote{ This research was partly supported by Grant-in-Aid for Scientific Research (C) (No. 22540097), Japan Society for the Promotion of Science.}, Kazuhiro Shibuya, 
Gheorghe Piti\c s}
\date{\textit{In memory of Professor Dr. Stere Ianu\c s}}
\begin{document}


\maketitle

\begin{abstract}
An $(I,J,K)$-generalized Finsler structure on a 3-manifold is a generalization
of a Finslerian structure, introduced in order to separate and clarify 
 the local and global aspects in Finsler geometry making use of the Cartan's method of exterior differential systems (\cite{Br2002}). 

In this paper, we show that there is a close relation between $(I,J,1)$-generalized Finsler structures and a class of contact circles, namely the so-called {\it Cartan structures} (\cite{GG1995}).

This correspondence allows us to determine the topology of 3-manifolds that admit $(I,J,1)$-generalized Finsler structures and to single out classes of $(I,J,1)$-generalized Finsler structures induced by standard Cartan structures.
\end{abstract}


\section{Introduction}\label{sec: intro}

\ \ \  A classical Finsler structure $(M,F)$ is a smooth manifold $M$ endowed with a Banach norm on each tangent space $T_{x}M$ that varies smoothly with the base point all over the manifold, for any $x\in M$. A Riemannian manifold is a particular case when each of these Banach norms are induced by a quadratic form. Geometrically, this is equivalent to the choice of a unit sphere in each tangent space, such that one obtains a smooth hypersurface $\Sigma\subset TM$ which has the property that each fiber $\Sigma_{x}:=\Sigma\cap T_{x}M$ is a smooth, strictly convex hypersurface in $TM$ which surrounds the origin $O_{x}\in T_{x}M$. 

Except the preference for local computations, a peculiarity of Finsler structures is that, unlike the Riemannian case, one has no means to specify a canonical Finsler structure on a given manifold, therefore, constructing models for Finslerian structures with given geometrical properties (such as constant flag curvature) is an important topic that rises interesting questions about the local and global generality of such structures. 

A generalization of classical Finsler structures has been introduced by R. Bryant by defining the notion of $(I,J,K)$-generalized Finsler structures (see \cite{Br1996}), namely a 3-manifold $\Sigma$ endowed with a coframing satisfying some specific structure equations (see Section \ref{sec: FinslerStructures} for the precise definition). We use here only such structures on 3-manifolds, but these can be defined in any dimension (see \cite{Br2002}). Generalized Finsler structures were introduced with the specific intention of \lq micro-localization' of classical Finsler structures that allows separating the local geometrical properties of coframes satisfying certain differential geometric conditions, or solving PDE's, from the global geometrical properties of the manifolds $\Sigma$ or $M$ related with the behavior of the leaf space of certain foliations. 

There are a lot of questions and problems that this new notion brings about. For instance, the absence of results on the existence of global defined Finsler structures motivates one to study the existence of global defined generalized Finsler structures on a 3-manifold $\Sigma$ as well as the case when this is realizable as a classical Finsler structure on a surface $M$. For the case of constant flag curvature one, the only available constructions are Bryant's. In particular, making use of generalized Finsler structures, he was able to construct for the first time global defined Finsler structures, of constant flag curvature one, on spheres (\cite{Br1996}, \cite{Br2002}), proving in this way the importance of generalized Finsler structures and that a more detailed study of these is worthy.

The existence of  Finsler structures on surfaces with vanishing Landsberg scalar, i.e.  $J=0$ (see Section \ref{sec: FinslerStructures} for definition), is an old open problem in Finsler geometry (see \cite{SSS2010} for the background of the problem). A progress in solving this problem was obtained by showing the existence of non-trivial generalized Finsler structures with $J=0$ (\cite{SSS2010}, \cite{SSS2011}) proving one more time the incontestable utility of generalized Finsler structures. 

Therefore, since the essential ingredient used by Bryant in constructing generalized Finsler structures is the contact structure, it is natural to attempt the use of refined contact topology methods, that have been developed within the last 25 years. 

On the other hand, let us also recall a classical result, namely that any closed, oriented 3-manifold admits a parallelization by contact forms (see for example \cite{GG1997}, \cite{G1987}). It worth mentioning that the history behind this results has started with the following S. S. Chern's simple, but extremely fruitful question in 1966: \lq\lq Does a closed, oriented 3-manifold always admit contact structures?'' The answer is affirmative and it was given in 1971 by R. Lutz proving the existence on the 3-sphere, and by J. Martinet in the general case. 

Taking these into account, it is natural to ask the problem of existence of two or three linearly independent contact forms on 3-manifolds satisfying supplementary conditions, such as to determine the same volume, or even more, that any $\Sph^{1}$-, or $\Sph^{2}$-, linear combination to determine the same volume, respectively. In this way one obtains the notions of taut contact circle and taut contact sphere, respectively, introduced and studied by H. Geiges and J. Gonzalo in \cite{GG1995}, \cite{GG1997}, \cite{GG2002}. These notions turn out to be extremely fruitful leading to a complete classification of these structures using the 8-geometries of Thurston, moduli space dimension and many other interesting results (see especially \cite{GG1995}, \cite{GG2002}).

We believe that all these are strong enough reasons to motivate our attempt hereafter to apply H. Geiges and J. Gonzalo's results and methods in the study of global defined $(I,J,K)$-generalized Finsler structures on 3-manifolds.

\bigskip

In the present paper we study the relation between $(I,J,K)$-generalized Finsler structures on closed 3-manifolds and taut contact circles defined on the same manifold, in particular Cartan structures (see \cite{GG1995}, \cite{GG2002} for details on these structures). 

Our study goes both directions. We show that an $(I,J,K)$-generalized Finsler structure, defined on a closed 3-manifold $\Sigma$, naturally induces a taut contact circle on $\Sigma$, which is in fact a $\mathcal K$-Cartan structure, provided $K=1$ (Proposition \ref{prop: correspondence}). Conversely, if we start with a $\mathcal K$-Cartan structure, then the coframe \eqref{coframe induced IJ1GFS} is an $(I,J,1)$-generalized Finsler structure on a quotient manifold $\Sigma=G\slash\Gamma$, provided we are able to find a $\Gamma$-invariant 1-form $\varphi$ on $G$ that satisfies the structure equation \eqref{phi_struct_eq} with non-constant coefficients. Here $G$ and $\Gamma$ have the meanings in the theorem below. This approach allows us to obtain several new 
$(I,J,1)$-generalized Finsler structures on $\Sigma$ and to write explicitly their form (Section \ref{sec:4}).

The $(I,J,K)$-generalized Finsler structures are more general geometrical structures than taut contact circles and Cartan structures, however there is very few that we know about them. The present paper shows how the topology of closed 3-manifolds $\Sigma$ that admit an $(I,J,1)$-generalized Finsler structure is restricted. Indeed, here is our main result:

\begin{theorem}\label{thm: GFS classif}
Let $\Sigma$ be a closed 3-manifold. Then $\Sigma$ admits an $(I,J,1)$-generalized Finsler structure if and only if 
it is diffeomorphic to a quotient of the Lie group $G$ 
under a discrete subgroup $\Gamma$ of $G$, where $G$ is one of the following:
\begin{enumerate}
\item $\Sph^3=SU(2)$, the universal cover of $SO(3)$,
\item $\widetilde{SL}_2$, the universal cover of $PSL_2(\mathbb R)$,
\item $\widetilde{E}_2$, the universal cover of the Euclidean group, i.e. orientation preserving isometries of $\mathbb R^2$.
\end{enumerate}
\end{theorem}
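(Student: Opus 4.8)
The plan is to transport the whole statement across the dictionary set up in Proposition \ref{prop: correspondence} together with its converse, thereby reducing it to the Geiges--Gonzalo classification of closed $3$-manifolds carrying a taut contact circle (\cite{GG1995}, \cite{GG2002}). In other words, the substance of the proof is not a fresh topological argument but the verification that the two translations between $(I,J,1)$-generalized Finsler structures and $\mathcal K$-Cartan structures are compatible with passage to quotients $G/\Gamma$, after which the classification of the underlying groups is quoted.

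For the \emph{only if} part I would argue as follows. Given an $(I,J,1)$-generalized Finsler structure on the closed $3$-manifold $\Sigma$, Proposition \ref{prop: correspondence} produces on the same $\Sigma$ a $\mathcal K$-Cartan structure, that is, a taut contact circle of Cartan type. Since a generalized Finsler structure is by definition a global coframing, $\Sigma$ is automatically orientable, so the Geiges--Gonzalo theorem applies and forces $\Sigma$ to be diffeomorphic to a quotient $G/\Gamma$ of one of the three unimodular Lie groups $\Sph^3=SU(2)$, $\widetilde{SL}_2$, $\widetilde{E}_2$ by a discrete subgroup $\Gamma$. The only point to check here is that the particular $\mathcal K$-Cartan structure delivered by Proposition \ref{prop: correspondence} genuinely falls under the Geiges--Gonzalo list (taut, and normalizable to one of their left-invariant models), which one reads off from its defining structure equations; granting this, the direction is essentially a citation.

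For the \emph{if} part, suppose $\Sigma\cong G/\Gamma$ with $G$ one of the three groups. Each such $G$ carries its standard left-invariant Cartan structure by \cite{GG1995}, and, being left-invariant, this contact circle descends to the compact quotient $\Sigma$ because $\Gamma$ acts by translations. Following the converse construction indicated before the statement, it then remains to adjoin a $1$-form $\varphi$ so that the full coframe \eqref{coframe induced IJ1GFS} satisfies the $(I,J,1)$-generalized Finsler structure equations. I would first look for $\varphi$ among the left-invariant forms and solve \eqref{phi_struct_eq} at the level of the Lie algebra $\mathfrak g$ of $G$: this is a finite system of algebraic relations among structure constants, to be checked separately on each of the three algebras, and any left-invariant solution descends to $G/\Gamma$ and yields an $(I,J,1)$-structure with constant $I,J$. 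Once one verifies that this system is solvable on each of the three Lie algebras, existence follows and the theorem is proved.

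The sharper content --- realizing these structures with \emph{non-constant} coefficients, as needed for the explicit families of Section \ref{sec:4} --- requires $\varphi$ to be merely $\Gamma$-invariant rather than $G$-invariant. Here I would perturb a left-invariant solution $\varphi_0$ to $\varphi_0+\alpha$, where $\alpha$ is assembled from non-constant $\Gamma$-invariant functions (equivalently, functions well defined on the closed manifold $\Sigma$) and is tuned so that \eqref{phi_struct_eq} continues to hold. This is the main obstacle: non-constant $\Gamma$-invariant functions exist in abundance precisely because $\Sigma$ is closed, but forcing them into the rigid algebraic shape of \eqref{phi_struct_eq} while keeping $I$ or $J$ non-constant is delicate and must be carried out case by case, exploiting the specific geometry --- spherical, hyperbolic, or Euclidean --- of each group $G$.
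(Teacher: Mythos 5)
Your ``only if'' direction is exactly the paper's: pass to the pair $(\alpha^1,\alpha^2)=(\omega^1,\omega^3)$ via Proposition \ref{prop: correspondence}, note it is a taut contact circle, and quote Theorem \ref{thm: classification}. The genuine gap is in your ``if'' direction. You propose to solve \eqref{phi_struct_eq} by a left-invariant $\varphi$ at the Lie-algebra level and then perturb; but for the standard Cartan structure on $G$ the function $\mathcal K=\varepsilon$ is constant, equal to $1,0,-1$ on $SU(2)$, $\widetilde E_2$, $\widetilde{SL}_2$ respectively, and for an invariant $\varphi=I\alpha^1+J\alpha^2$ with constant $I,J$ the structure equations \eqref{str_eq_for_stand_Cartan} give
\begin{equation*}
d\varphi=I\,\alpha^2\wedge\alpha^3+J\,\alpha^3\wedge\alpha^1,
\end{equation*}
which has no $\alpha^1\wedge\alpha^2$ component at all. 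Since \eqref{phi_struct_eq} demands $d\varphi=(\varepsilon-1)\,\alpha^1\wedge\alpha^2$, the algebraic system you propose to check is solvable invariantly only when $\varepsilon=1$, i.e.\ only on $SU(2)$, and there only by $\varphi=0$, which is the trivial structure. On $\widetilde E_2$ and $\widetilde{SL}_2$ it is inconsistent, so there is no invariant solution to perturb, and the step ``once one verifies that this system is solvable on each of the three Lie algebras, existence follows'' fails for two of the three groups. This is not an accident of the ansatz: by the Bianchi identities \eqref{eq: K=1_Bianchi}, constant $I,J$ force $I=J=0$, so a constant-coefficient $(I,J,1)$-structure is a coframe satisfying the Maurer--Cartan equations of $\mathfrak{su}(2)$, and on a closed manifold this forces the universal cover to be $\Sph^3$; hence on closed quotients of $\widetilde E_2$ and $\widetilde{SL}_2$ \emph{every} $(I,J,1)$-structure has non-constant invariants. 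Non-constancy is therefore not ``sharper content'' to be deferred, as your last paragraph suggests, but is forced from the start --- this is precisely the point of the paper's remark at the end of Section 4 that invariant structures yield only trivial cases. Note also that on $T^3=\widetilde E_2/\Gamma$ the obstruction is not even removable by dropping invariance: there $\alpha^1\wedge\alpha^2=dx\wedge dy$ represents a nonzero class in $H^2(T^3;\R)$, so $d\varphi=-\alpha^1\wedge\alpha^2$ has no solution whatsoever as long as the standard Cartan structure is kept fixed.

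The paper's converse is designed to evade exactly this. It does not fix the standard Cartan structure and solve the PDE \eqref{phi_struct_eq} for $\varphi$; instead it moves within the conformal class of Cartan structures (Propositions \ref{constr 1}, \ref{constr 2}, \ref{construction prop}), trading \eqref{phi_struct_eq} for the \emph{algebraic} equation \eqref{alg eq for v} (equivalently \eqref{alg_rel_for v downstairs} downstairs on the surface $\Lambda_0$) for a conformal factor $v$, with only the open positivity condition C \ref{C3} left to arrange for an auxiliary $1$-form $\phi$ on $\Lambda_0$. The resulting coframe \eqref{induced GFS} then has non-constant invariants \eqref{IJ generali} by construction. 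Any repair of your argument must build in a mechanism of this kind that produces non-constant $I,J$ from the outset, rather than attempting to bootstrap from an invariant solution that provably does not exist.
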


In the Theorem above both trivial and non-trivial cases of $(I,J,1)$-generalized Finsler structures are included. However, we show that for each $G\in\{SU(2),\widetilde E_{2},\widetilde{SL}_{2}\}$ there exist non-trivial globally defined $(I,J,1)$-generalized Finsler structures on each 3-manifold of type $G\slash\Gamma$ for a discrete subgroup $\Gamma$ of $G$.

Concrete constructions and local forms are given in Section \ref{sec:4}. Indeed, the easier way to obtain such $(I,J,1)$-generalized Finsler structures is to start with Liouville-Cartan structures obtained from Riemannian surfaces and then apply the construction we gave in Section \ref{sec:5}. Another method is to work directly on $G$ with $\Gamma$-invariant 1-forms $\varphi$ as we do in Section \ref{sec:6.1}. 
 
 In Section \ref{sec:6} we study $(I,J,1)$-generalized Finsler structures induced by Cartan structures in the context of conformal classes of taut contact circles on a closed 3-manifold $\Sigma$. In the case of $SU(2)\slash\Gamma$ we point out a 1-to-1 correspondence of a special class of $(I,J,1)$-generalized Finsler structures, namely the $\mathcal K$-induced $(I,J,1)$-generalized Finsler structures (see Section \ref{sec:6} for definition), and $\mathcal K$-Cartan structures with $\mathcal K>0$, fact that allows to compute the moduli space dimension of these $(I,J,1)$-generalized Finsler structures in a special case. 
 
 We  remark that the construction of taut contact structures from $(I,J,K)$-generalized structures (in Proposition \ref{prop: correspondence}) is far from being the only one. We describe in Appendix another way of doing this, namely we pair $(I, 0, K)$-generalized Finsler structures with $\mathcal K$-Cartan structures linking in this way the present paper with our past work \cite{SSS2010} and \cite{SSS2011}. In this case the topology of $\Sigma$ is also restricted in a similar way as described in Theorem \ref{thm: GFS classif}.

Our present study is in the same time a generalization of the work of Bryant's (\cite{Br1996}, \cite{Br2002}) where $(I,J,1)$-generalized Finsler structures are constructed by means of a Zoll metric $(\Sph^{2},g)$ which gives a classical Finsler structure on the round sphere $\Sph^{2}$ (compare with our constructions in Section \ref{sec:6.1}). Interpreted in the context of taut contact circles our study gives a more geometrical explanation of the constructions in \cite{Br2002}. 

Finally, we point out that the present paper is only the beginning of the study of $(I,J,K)$-generalized Finsler structures on quotient manifolds $G\slash\Gamma$ and there are many things left to be clarified in the future. For a given $(I,J,K)$-generalized Finsler structure on a closed 3-manifold $\Sigma$, we are mainly interested in relating the discrete subgroups $\Gamma$ of $G$ with the Finslerian isometry groups acting on the space $M:=\Sigma\slash\{\omega^{1},\omega^{2}\}$ that is an orbifold in the general case. We have clarified in the present paper the relation between taut contact circles, more precisely $\mathcal K$-Cartan structures, and $(I,J,1)$-generalized Finsler structures on $\Sigma$, but relating other types of contact circles (see \cite{GG1997} for definitions) with $(I,J,K)$-generalized Finsler structures is still an open problem. These topics, as well as many others,  might be the subject of a forthcoming paper.

\bigskip

{\bf Acknowledgments.}
We would like to express our thanks to Prof. Hideo Shimada and Martin Guest who supported us with many useful suggestions during the preparation of this manuscript. We are also indebted to Prof. Reiko Miyaoka for suggestions that have improved the manuscript.


\section{Finsler and Generalized Finsler Structures}\label{sec: FinslerStructures}


 \quad Let  us start by recalling that a Finsler norm on a real smooth, $n$-dimensional manifold
$M$ is a function $F:TM\to \left[0,\infty \right)$ that is positive and
smooth on $\widetilde{TM}=TM\backslash\{0\}$, has the {\it homogeneity property}
$F(x,\lambda v)=\lambda F(x,v)$, for all $\lambda > 0$ and all 
$v\in T_xM$, having also the {\it strong convexity} property that the
Hessian matrix
\begin{equation}\label{Hessian matrix}
g_{ij}=\frac{1}{2}\frac{\partial^2 F^2}{\partial y^i\partial y^j}(x,y)
\end{equation}
is positive definite at any point $u=(x^i,y^i)\in \widetilde{TM}$.

 The fundamental function $F$ of a Finsler structure $(M,F)$ determines and it is determined by the (tangent) {\it indicatrix}, or the total space of the unit tangent bundle 
$\Sigma_F:=\{u\in TM:F(u)=1\}$,
which is a smooth hypersurface of $TM$ such that at each $x\in M$ the {\it indicatrix at x}
$\Sigma_x:=\{v\in T_xM \ |\  F(x,v)=1\}=\Sigma_F\cap T_xM$
is a smooth, closed, strictly convex hypersurface in
$T_xM$. 
   
   A Finsler structure $(M,F)$ can be therefore regarded as smooth
hypersurface $\Sigma\subset TM$ for which the canonical projection
$\pi:\Sigma\to M$ is a surjective submersion and having the property
that for each $x\in M$, the $\pi$-fiber $\Sigma_x=\pi^{-1}(x)$ is
strictly convex including the origin $O_x\in T_xM$. 

A generalization of this notion is the generalized Finsler structure introduced by R. Bryant (see \cite{Br1996}, \cite{Br2002} for definitions and fundamental properties, as well as \cite{SSS2010} and \cite{SSS2011} for some recent developments). 

\begin{definition}\label{GFS}
A 3-dimensional manifold $\Sigma$ endowed with a coframing $
\omega=(\omega^1,\omega^2,\omega^3)$ which satisfies the structure equations 
\begin{equation}\label{finsler_struct_eq}
\begin{split}
d\omega^1&=-I\omega^1\wedge\omega^3+\omega^2\wedge\omega^3\\
d\omega^2&=-\omega^1\wedge\omega^3\\
d\omega^3&=K\omega^1\wedge\omega^2-J\omega^1\wedge \omega^3
\end{split}
\end{equation}
will be called an $(I,J,K)$-{\it generalized Finsler structure} on $\Sigma$,
where $I$, $J$, $K$ are smooth functions on $\Sigma$, called {\it the invariants} of the generalized Finsler structure $
(\Sigma,\omega)$.
\end{definition}


As pointed out in  \cite{Br1996}, the difference between a classical Finsler structure and a 
generalized one is global in nature, in the sense that {\it every generalized Finsler structure on a 3-manifold is locally 
diffeomorphic to a classical Finsler surface structure. }

By taking the exterior derivative of the structure equations (\ref{finsler_struct_eq}) one obtains the {\it Bianchi equations} 
\begin{equation}\label{GFS_Bianchi}
   J=I_2,\quad  K_3+KI+J_2=0,
\end{equation}
 where we denote by subscripts the directional derivatives with respect to the coframing $\omega$, i.e.
$df=f_1\omega^1+f_2\omega^2+f_3\omega^3$,
for any smooth function $f$ on $\Sigma$.
	
	Taking now one more exterior derivative of the last formula written above,  one obtains the {\it Ricci identities} 
with respect to the generalized Finsler structure
\begin{equation*}
\begin{split}
& f_{21}-f_{12}=-Kf_3\\
& f_{32}-f_{23}=-f_1\\
& f_{31}-f_{13}=If_1+f_2+Jf_3.
\end{split}
\end{equation*}

 As long as we work only with generalized Finsler surfaces, it might be possible that this generalized structure 
does not lead to a classical Finsler structure on a surface $M$. Indeed,
the following fundamental result gives necessary and sufficient conditions for a generalized Finsler structure to be a Finsler structure (\cite{Br1996}):

\begin{theorem}\label{thm: GFS equivalence to Finsler}
The necessary and sufficient conditions for an $(I,J,K)$-generalized Finsler structure  $(\Sigma,\omega)$ to be realizable as a classical Finsler structure on a surface $M$ are
\begin{enumerate} 
\item the leaves of the codimension two foliation $\mathcal F=\{\omega^1=0,\ \omega^2=0\}$ are compact;
\item it is amenable, i.e. the leaf space $M$ of the foliation $\mathcal F$ is a smooth surface 
such that the natural projection $\pi:\Sigma\to M$   is a smooth submersion;
\item the canonical immersion $\iota:\Sigma\to TM$, given by 
$\iota(u)=\pi_{*,u}(\hat{e}_2)$, is one-to-one on each $\pi$-fiber $\Sigma_x$, 
where $(\hat e_{1},\hat e_{2},\hat e_{3})$ is the dual frame of the coframing $\omega=(\omega^{1}, \omega^{2}, \omega^{3})$.
\end{enumerate}
 \end{theorem}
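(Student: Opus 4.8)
The plan is to read the three conditions as exactly the global data needed to promote the local Finsler picture — available through the cited fact that every generalized Finsler structure is locally diffeomorphic to a classical Finsler surface — into a genuine Finsler surface, and to recognize $\iota$ as the tautological immersion of an indicatrix bundle. The necessity direction is routine: if $(\Sigma,\omega)=(\Sigma_F,\omega_F)$ for a Finsler structure $(M,F)$, then $\omega^1,\omega^2$ are pulled back from $M$, so $\mathcal F=\{\omega^1=0,\omega^2=0\}$ is precisely the foliation by indicatrices $\Sigma_x$; these are closed convex curves and hence compact (condition (1)), their leaf space is $M$ with $\pi$ the bundle projection (condition (2)), and $\iota$ recovers the tautological embedding $\Sigma_x\hookrightarrow T_xM$, which is injective (condition (3)).

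For sufficiency I would first use (1) and (2) to fix the base. The common kernel of $\omega^1,\omega^2$ is the line field spanned by $\hat e_3$, so $\mathcal F$ is a one-dimensional foliation whose leaves, being compact and connected, are circles; together with the amenability in (2) (and properness, automatic when $\Sigma$ is closed) Ehresmann's theorem makes $\pi:\Sigma\to M$ a smooth $\Sph^1$-bundle over the surface $M$. The heart of the argument is then a direct computation of the curve traced on a fixed fibre $\Sigma_x$ by $\iota(u)=\pi_{*,u}(\hat e_2)\in T_xM$. Reading the brackets off \eqref{finsler_struct_eq} gives $[\hat e_3,\hat e_2]=\hat e_1$ and $[\hat e_3,\hat e_1]=-I\hat e_1-\hat e_2-J\hat e_3$, and since $\pi_*\hat e_3=0$ one differentiates $\iota$ along the fibre (parametrised by the flow of $\hat e_3$) to obtain
\begin{equation*}
\iota'=\pi_*\hat e_1,\qquad \iota''=-I\,\iota'-\iota ,
\end{equation*}
where $'$ denotes the fibre derivative. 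Thus position and velocity satisfy $\iota\times\iota'=\pi_*\hat e_2\times\pi_*\hat e_1\neq0$ (as $\pi_*\hat e_1,\pi_*\hat e_2$ form a basis of $T_xM$), while the signed curvature is controlled by $\iota'\times\iota''=\iota\times\iota'\neq0$.

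These two facts say that on each fibre $\iota$ is an immersion whose image has nowhere-vanishing curvature of constant sign, i.e.\ a locally strictly convex curve transverse to the radial rays of $T_xM$. Condition (3) upgrades this immersed curve to an embedded one, and condition (1) makes it closed; a closed embedded plane curve with constant-sign curvature bounds a convex region $D_x$. To see that the origin lies inside $D_x$ I would argue by the winding of $\arg\iota$: since $\iota\times\iota'$ has constant sign, $\arg\iota$ is strictly monotone along the closed fibre and hence changes by a nonzero multiple of $2\pi$, which is impossible if a line through $0$ separated $0$ from $D_x$; therefore $0\in\mathrm{int}\,D_x$ and $\iota(\Sigma_x)$ is a genuine indicatrix. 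Declaring $\iota(\Sigma)=\{F=1\}$ and extending by positive homogeneity then defines a function $F$ on $TM$ whose restriction to each $T_xM$ is a Minkowski norm; smoothness in $x$ comes from the bundle structure and the smoothness of $\iota$, and the strong convexity \eqref{Hessian matrix} is equivalent to the strict convexity of the indicatrices just established.

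The step I expect to be the main obstacle is the last one: checking that the canonical Finsler coframing $\omega_F$ of the reconstructed $(M,F)$ pulls back under $\iota$ to the original $\omega$, so that $(\Sigma,\omega)$ is genuinely realised and not merely covered by some Finsler structure. Here I would invoke the local realizability result: around each point $\iota$ is a diffeomorphism onto a piece of an indicatrix bundle intertwining $\omega$ with the local canonical coframe, and these local Finsler structures agree on overlaps because they share the same indicatrices $\iota(\Sigma_x)$; the relations $\iota=\pi_*\hat e_2$ and $\iota'=\pi_*\hat e_1$ identify $\omega^2,\omega^1$ with the tautological and Hilbert forms and pin down $\omega^3$ along the fibres, so the uniqueness of the coframe satisfying \eqref{finsler_struct_eq} forces $\iota^*\omega_F=\omega$ globally. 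The fibrewise diffeomorphism $\iota:\Sigma\to\Sigma_F$ is then a global diffeomorphism of generalized Finsler structures, completing the proof.
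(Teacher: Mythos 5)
The first thing to note is that the paper contains no proof of this theorem: it is stated as a quoted result of Bryant (\cite{Br1996}), so there is no internal argument to compare yours against, and I can only assess your proof on its own merits against the standard argument it reconstructs.

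Your proof is essentially correct, and it is in substance the classical one. Necessity is fine, with one quibble: $\omega^1,\omega^2$ are not pulled back from $M$ (their values vary along the fibres); they are merely semibasic, which is all you need to conclude that $\mathcal F$ is the fibre foliation. In the sufficiency direction, your bracket computations $[\hat e_3,\hat e_2]=\hat e_1$ and $[\hat e_3,\hat e_1]=-I\hat e_1-\hat e_2-J\hat e_3$ are correct, hence so is the fibrewise ODE $\iota''=-I\,\iota'-\iota$, and the convexity-plus-winding argument placing the origin inside each embedded curve $\iota(\Sigma_x)$ is sound. The step you flag as the main obstacle --- that the canonical coframing of the reconstructed $(M,F)$ pulls back under $\iota$ to $\omega$ --- can be closed more directly than by your appeal to local realizability: differentiating $F(\iota(t))\equiv 1$ twice and using Euler's homogeneity relations gives $g_{\iota}(\iota',\iota')=-dF_{\iota}(\iota'')$, and the ODE then yields $g_{\iota}(\iota',\iota')=I\,dF_{\iota}(\iota')+dF_{\iota}(\iota)=0+1=1$, while $g_{\iota}(\iota,\iota)=F^2=1$ and $g_{\iota}(\iota,\iota')=F\,dF(\iota')=0$. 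So $g$ is the identity matrix in the basis $(\iota,\iota')$, which simultaneously proves the strong convexity \eqref{Hessian matrix} (no separate appeal to an equivalence with curve convexity is needed) and shows that $\pi_*\hat e_1$ is exactly the $g$-unit tangent to the indicatrix; consequently $\iota$ identifies $\omega^2,\omega^1$ with the Hilbert form and its $g$-orthonormal companion, and $\omega^3$ is then forced by the uniqueness of the connection form in the first two equations of \eqref{finsler_struct_eq}. Modulo this firming-up, and the standard point-set fact (which you correctly invoke via Ehresmann, and which is where compactness of the leaves together with amenability is really used) that $\pi$ becomes a locally trivial circle bundle, your argument is complete.
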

 
 \begin{remark}
 We point out that the definitions of generalized Finsler structures given in \cite{Br1996} and \cite{Br2002} are slightly different. Our Definition \ref{GFS} is the same as \cite{Br1996}, while the definition in \cite{Br2002} adds supplementary conditions (see Definition 1 in \cite{Br2002} for details). 
 \end{remark}

Special cases of generalized Finsler structures are easily obtained by taking particular values for the structure functions $I$, $J$, $K$.

If $I=0$, then from Bianchi equations one obtains $J=0$ and the resulting generalized Finsler structures will be called {\it trivial} hereafter. A $(0,0,K)$-generalized Finsler structure is a $K$-Cartan structure. We will discuss this type of structure in detail in next section. A  
$(0,0,K)$-generalized Finsler structure that satisfies the conditions in Theorem \ref{thm: GFS equivalence to Finsler} above induces a Riemannian metric of Gauss curvature $K$ on the leaf space $M$ of the foliation $\{\omega^1=0,\ \omega^2=0\}$.

Another special case is an $(I,J,1)$-generalized Finsler structure. Such a structure satisfying the conditions in Theorem \ref{thm: GFS equivalence to Finsler} provides a Finsler structure of constant flag curvature on the leaf space $M$ (see \cite{BCS2000} and \cite{Br2002} for details on constant flag curvature Riemann-Finsler structures).

 Let us point out, for later reference, that an $(I,J,1)$-generalized Finsler structure is a coframe 
 $(\omega^1,\omega^2,\omega^3)$ on the 3-manifold $\Sigma$ that verifies the structure equations 
\begin{equation}\label{eq: finslerK=1_struct_eq}
\begin{split}
d\omega^1&=-I\omega^1\wedge\omega^3+\omega^2\wedge\omega^3\\
d\omega^2&=-\omega^1\wedge\omega^3\\
d\omega^3&=\omega^1\wedge\omega^2-J\omega^1\wedge \omega^3.
\end{split}
\end{equation}

In this case, the Bianchi equations read
\begin{equation}\label{eq: K=1_Bianchi}
 J=I_2, \qquad I=-J_2.
\end{equation}
\section{Contact circles}\label{sec: Contact circles}

A quick look at the structure equations \eqref{finsler_struct_eq} shows that for a generalized Finsler structure 
$(\omega^1,\omega^2,\omega^3)$ on a 3-manifold $\Sigma$, the 1-forms $\omega^1$ and $\omega^2$ are contact forms. In the \lq\lq non-flat" case $K\neq 0$ everywhere on $\Sigma$, one can easily see that $\omega^3$ is also a contact form. 

All these suggest that some concepts and ideas in contact geometry might be useful in the study of generalized Finsler structures.

Let us recall here few basic facts (for details see \cite{GG1995}, \cite{GG2002}).

%


\begin{definition}\label{def: contact circle}
A 3-manifold $\Sigma$ is said to admit a {\it contact circle} if it admits a pair of contact forms $(\alpha^1,\alpha^2)$ such that for any $(\lambda_1,\lambda_2)\in S^1$, i.e. $\lambda_1,\lambda_2\in \mathbb R$, $(\lambda_1)^2+(\lambda_2)^2=1$, the linear combination $\lambda_1\alpha^1+\lambda_2\alpha^2$ is also a contact form.
\end{definition}


Concerning contact circles it is known
that on every closed, orientable 3-manifold there are contact circles realizing any of the two orientations (\cite{GG1997}, Theorem 1.2).
%
 
 \begin{definition}\label{def: taut contact}
A contact circle $(\alpha^1,\alpha^2)$ is called a  {\it taut contact circle} if the contact forms  $\lambda_1\alpha^1+\lambda_2\alpha^2$ define the same volume form for all $(\lambda_1,\lambda_2)\in S^1$.

\end{definition}

By straightforward computation one can verify  that 
the contact circle $(\alpha^1,\alpha^2)$ is a taut contact circle if and only if the conditions
\begin{equation}
\begin{split}
& \alpha^1\wedge d\alpha^1=\alpha^2\wedge d\alpha^2\neq 0\\
& \alpha^1\wedge d\alpha^2+\alpha^2\wedge d\alpha^1=0
\end{split}
\end{equation}
hold good.

The following definition  (\cite{GG1995}) is also natural. 

\begin{definition}  
The contact circle $(\alpha^1,\alpha^2)$ is called a {\it Cartan structure} on the 3-manifold $\Sigma$ if the following conditions are satisfied
\begin{equation}
\begin{split}
& \alpha^1\wedge d\alpha^1=\alpha^2\wedge d\alpha^2\neq 0\\
& \alpha^1\wedge d\alpha^2=0, \quad \alpha^2\wedge d\alpha^1=0.
\end{split}
\end{equation}
\end{definition}

%
%
%

It results immediately

\begin{lemma}{\rm(\cite{GG2002})}\label{Lemma 3.3}

If $(\alpha^1,\alpha^2)$ is a Cartan structure on the 3-manifold $\Sigma$, then there exists a unique 1-form $\eta$ on $\Sigma$ such that
\begin{equation}\label{Cartan_structure}
 d\alpha^1=\alpha^2\wedge\eta, \quad
 d\alpha^2=\eta\wedge\alpha^1.
\end{equation}
\end{lemma}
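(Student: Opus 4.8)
The plan is to reduce the statement to pointwise linear algebra on the exterior algebra of a three-dimensional vector space, carried out in a convenient coframe. First I would note that the two Cartan-structure equations $\alpha^1\wedge d\alpha^2=0$ and $\alpha^2\wedge d\alpha^1=0$ imply the single taut condition $\alpha^1\wedge d\alpha^2+\alpha^2\wedge d\alpha^1=0$, so $(\alpha^1,\alpha^2)$ is in particular a taut contact circle; hence every $\lambda_1\alpha^1+\lambda_2\alpha^2$ is a contact form, and in particular $\alpha^1$ and $\alpha^2$ are pointwise linearly independent. I can therefore choose locally a 1-form $\beta$ completing $(\alpha^1,\alpha^2,\beta)$ to a coframe on $\Sigma$ and expand
$$d\alpha^1 = a_1\,\alpha^1\wedge\alpha^2 + a_2\,\alpha^2\wedge\beta + a_3\,\beta\wedge\alpha^1, \qquad d\alpha^2 = b_1\,\alpha^1\wedge\alpha^2 + b_2\,\alpha^2\wedge\beta + b_3\,\beta\wedge\alpha^1$$
with smooth coefficient functions.

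Next I would impose the Cartan conditions one at a time. Wedging the first expansion with $\alpha^2$ annihilates every term except the $\beta\wedge\alpha^1$ one, so $\alpha^2\wedge d\alpha^1=0$ forces $a_3=0$; symmetrically $\alpha^1\wedge d\alpha^2=0$ forces $b_2=0$. Computing both sides of the remaining condition $\alpha^1\wedge d\alpha^1=\alpha^2\wedge d\alpha^2$ as multiples of the volume form $\alpha^1\wedge\alpha^2\wedge\beta$ reduces it to the single scalar equation $a_2=b_3$. With these reductions I would then simply propose $\eta:=-a_1\,\alpha^1-b_1\,\alpha^2+a_2\,\beta$ and verify by direct expansion that $\alpha^2\wedge\eta=d\alpha^1$ and $\eta\wedge\alpha^1=d\alpha^2$, the second identity precisely using $b_3=a_2$.

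For uniqueness I would observe that any solution of $d\alpha^1=\alpha^2\wedge\eta$ is determined modulo $\alpha^2$, while any solution of $d\alpha^2=\eta\wedge\alpha^1$ is determined modulo $\alpha^1$; since $\alpha^1$ and $\alpha^2$ are independent, the two affine families meet in exactly one 1-form, so $\eta$ is unique. Because this forced expression for $\eta$ is insensitive to the choice of completion $\beta$ (two completions differ by a combination of $\alpha^1,\alpha^2$, which is absorbed into the coefficients $a_1,b_1$), the locally defined forms agree on overlaps and glue to a globally defined 1-form on $\Sigma$.

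The step I expect to be the crux is the matching of the two equations into a \emph{single} $\eta$: a priori $d\alpha^1$ only determines a 1-form up to $\alpha^2$ and $d\alpha^2$ only up to $\alpha^1$, so it is not automatic that one common $\eta$ serves both. This is exactly where the equal-volume condition $\alpha^1\wedge d\alpha^1=\alpha^2\wedge d\alpha^2$ is indispensable: invariantly, writing $d\alpha^1=\alpha^2\wedge\gamma_1$ and $d\alpha^2=-\gamma_2\wedge\alpha^1$ (possible since $\alpha^1,\alpha^2$ are nonvanishing), that condition is equivalent to $\alpha^1\wedge\alpha^2\wedge(\gamma_1+\gamma_2)=0$, i.e. $\gamma_1+\gamma_2\in\mathrm{span}(\alpha^1,\alpha^2)$, which is precisely what permits adjusting $\gamma_1$ by a multiple of $\alpha^2$ and $\gamma_2$ by a multiple of $\alpha^1$ so that the two forms coincide.
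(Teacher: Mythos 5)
Your proposal is correct, and every step checks out: in the coframe expansion the conditions $\alpha^2\wedge d\alpha^1=0$, $\alpha^1\wedge d\alpha^2=0$ and $\alpha^1\wedge d\alpha^1=\alpha^2\wedge d\alpha^2$ do reduce to $a_3=0$, $b_2=0$, $a_2=b_3$ respectively, the candidate $\eta=-a_1\alpha^1-b_1\alpha^2+a_2\beta$ verifies both equations in \eqref{Cartan_structure}, and the uniqueness argument (the difference of two solutions lies in $\mathrm{span}(\alpha^1)\cap\mathrm{span}(\alpha^2)=\{0\}$) also gives the gluing of the local constructions into a global 1-form. There is nothing in the paper to compare against: the paper states the lemma with the phrase ``It results immediately'' and a citation to \cite{GG2002}, offering no argument, so your write-up supplies the proof the paper omits. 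Your closing remark correctly identifies the one non-formal point, namely that the equal-volume condition is exactly what allows the two a priori independent normalizations (of $\eta$ modulo $\alpha^2$ from the first equation, and modulo $\alpha^1$ from the second) to be made compatible; without it the two affine families of solutions would generically be disjoint. One could shorten the gluing discussion by appealing only to uniqueness --- on an overlap both local forms solve the same equations, hence coincide --- but your observation that the formula is independent of the completion $\beta$ is equally valid.
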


From this Lemma it follows that $(\alpha^1,\alpha^2,\eta)$ is a coframe on $\Sigma$. Indeed, taking into account that $\alpha^1$, $\alpha^2$ are contact forms and 
\eqref{Cartan_structure} it follows $\alpha^1\wedge\alpha^2\wedge\eta\neq 0$.

The structure equations of the 1-form $\eta$ can be complicated in an arbitrary point of $\Sigma$, without further conditions. A special case of Cartan structure is given in the following definition.


\begin{definition} (\cite{GG2002})
The Cartan structure $(\alpha^1,\alpha^2)$ is called a {\it $\mathcal K$-Cartan structure} if the unique form $\eta$ in Lemma \ref{Lemma 3.3} satisfies the structure equation
\begin{equation}
d\eta=\mathcal K \alpha^1\wedge\alpha^2.
\end{equation}
\end{definition}

By abuse of language the coframe $(\alpha^1,\alpha^2,\eta)$ on the 3-manifold $\Sigma$ is called a $\mathcal K$-Cartan structure if it satisfies the structure equations
\begin{equation}\label{K_Cartan_structure}
\begin{split}
& d\alpha^1=\alpha^2\wedge\eta\\
& d\alpha^2=\eta\wedge\alpha^1\\
& d\eta=\mathcal K \alpha^1\wedge\alpha^2.
\end{split}
\end{equation}

Obviously, a $\mathcal K$-Cartan structure coincides with a $(0,0,\mathcal K)$-generalized Finsler structure.

One can easily remark that for any $\mathcal K$-Cartan structure, the differential of the structure function $\mathcal K$ must satisfy
$d\mathcal K=\mathcal K_1\alpha^1+\mathcal K_2\alpha^2$,
i.e. the function $\mathcal K$ lives on the leaf space of the codimension two foliation $\{\alpha^1=0,\alpha^2=0\}$. In general this is not necessarily a differentiable manifold. 

One of the main results of this theory is the following 


\begin{theorem} {\rm (\cite{GG1995})}\label{thm: classification}

Let $\Sigma$ be a closed 3-manifold. Then $\Sigma$ admits a taut contact circle if and only if 
$\Sigma$ is diffeomorphic to a quotient of the Lie group $G$ 
under a discrete subgroup $\Gamma$ of $G$, acting by left multiplication, where $G$ is one of the following:
\begin{enumerate}
\item $\Sph^3=SU(2)$, the universal cover of $SO(3)$,
\item $\widetilde{SL}_2$, the universal cover of $PSL_2(\mathbb R)$,
\item $\widetilde{E}_2$, the universal cover of the Euclidean group, i.e. orientation preserving isometries of $\mathbb R^2$.
\end{enumerate}
\end{theorem}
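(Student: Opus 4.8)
The plan is to prove the two implications separately: the \emph{if} direction by an explicit left-invariant construction on each of the three groups, and the \emph{only if} direction by a complexification argument followed by a global rigidity (developing-map) step.

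For the \emph{if} direction I would exploit the fact that a left-invariant coframe on a Lie group $G$ is just a basis of the dual Lie algebra, whose exterior derivatives are governed by the constant structure constants through the Maurer--Cartan equations. For each of $\Sph^3=SU(2)$, $\wt{SL}_2$, and $\wt E_2$ one fixes a basis $(\sigma^1,\sigma^2,\sigma^3)$ of left-invariant $1$-forms in which the structure equations read
\begin{equation*}
d\sigma^1=\sigma^2\wedge\sigma^3,\qquad d\sigma^2=\sigma^3\wedge\sigma^1,\qquad d\sigma^3=\kappa\,\sigma^1\wedge\sigma^2,
\end{equation*}
with $\kappa=1$, $\kappa=-1$, and $\kappa=0$ respectively; these are precisely the three unimodular three-dimensional Lie algebras that occur. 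Setting $\alpha^1:=\sigma^1$ and $\alpha^2:=\sigma^2$, a one-line computation gives $\alpha^1\wedge d\alpha^1=\alpha^2\wedge d\alpha^2=\sigma^1\wedge\sigma^2\wedge\sigma^3\neq0$ and $\alpha^1\wedge d\alpha^2=\alpha^2\wedge d\alpha^1=0$, and more generally $(\lambda_1\alpha^1+\lambda_2\alpha^2)\wedge d(\lambda_1\alpha^1+\lambda_2\alpha^2)=(\lambda_1^2+\lambda_2^2)\,\sigma^1\wedge\sigma^2\wedge\sigma^3$, so $(\alpha^1,\alpha^2)$ is in fact a Cartan structure, hence a taut contact circle. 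Since these forms are left-invariant they descend to every quotient $\Gamma\backslash G$ by a discrete subgroup $\Gamma$ acting by left multiplication, which settles this direction.

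For the \emph{only if} direction I would pass to the complex-valued $1$-form $\alpha:=\alpha^1+i\alpha^2$. A short computation shows that the two taut conditions are together equivalent to the single complex equation $\alpha\wedge d\alpha=0$, together with the nondegeneracy $\alpha\wedge\bar\alpha\wedge\eta\neq0$ that records the contact property. Since $\alpha$ is nowhere vanishing (its real part $\alpha^1$ is a contact form), $\alpha\wedge d\alpha=0$ yields a complex $1$-form $\eta$ with $d\alpha=\eta\wedge\alpha$, and $d^2=0$ forces $d\eta\wedge\alpha=0$. Unwinding these relations in the coframe $(\alpha^1,\alpha^2,\gamma)$, where $\gamma$ is a real third form built from $\eta$, produces structure equations of the shape appearing in the \emph{if} direction but \emph{a priori} with a non-constant coefficient $\kappa$ in place of the constant $\kappa$ above; one then shows that the taut/Bianchi-type integrability relations pin $\kappa$ down and allow, after a normalization, the reduction to a constant-coefficient (left-invariant) coframe.

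The genuine difficulty is this last global step: upgrading the pointwise normal form of the structure equations to the statement that $\Sigma$ is actually a quotient of a single Lie group. Here one considers the universal cover $\wt\Sigma$ with its induced framing; because $\Sigma$ is closed the framing is complete, so the framing integrates to a free transitive action making $\wt\Sigma$ diffeomorphic to the simply connected Lie group $G$ with the corresponding structure constants, and a developing-map argument identifies the coframe with the Maurer--Cartan form of $G$. The deck group then embeds as a discrete subgroup $\Gamma\le G$ acting by left translations, giving $\Sigma\cong\Gamma\backslash G$. The three cases $\kappa=1,-1,0$ select $SU(2)$, $\wt{SL}_2$, $\wt E_2$, which is exactly the trichotomy in the statement; I expect the completeness-and-developing argument, rather than the algebra, to be the main obstacle.
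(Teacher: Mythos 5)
First, a point of reference: this theorem is stated in the paper with attribution to \cite{GG1995} and is \emph{not} proved there, so your proposal can only be compared with the proof in that reference. Your \emph{if} direction is correct, and it is essentially the paper's own construction of standard Cartan structures (Section 3.1): the Maurer--Cartan coframes of the three unimodular groups satisfy $d\sigma^1=\sigma^2\wedge\sigma^3$, $d\sigma^2=\sigma^3\wedge\sigma^1$, $d\sigma^3=\kappa\,\sigma^1\wedge\sigma^2$ with $\kappa=1,-1,0$, the pair $(\sigma^1,\sigma^2)$ is then a Cartan structure, hence a taut contact circle, and invariance under left translations lets it descend to the quotient. Your observation that the two taut conditions on $(\alpha^1,\alpha^2)$ are together equivalent to the single complex equation $\alpha\wedge d\alpha=0$ for $\alpha=\alpha^1+i\alpha^2$ is also correct, and it is indeed the starting point of Geiges--Gonzalo.

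The \emph{only if} direction, however, has a genuine gap at exactly the step you flag as the main obstacle, and it is not a missing verification but a false claim. You assert that taut/Bianchi-type integrability relations ``pin $\kappa$ down'' and allow a normalization to a coframe with \emph{constant} structure functions, after which the completeness/developing-map argument for coframes with constant structure constants applies. No such rigidity holds: taut contact circles --- even the strictly smaller class of Cartan structures --- generically have non-constant structure functions, and the present paper is built on precisely such examples. The Liouville--Cartan structure of any Riemannian surface $(\Lambda_0,g)$ of non-constant curvature is a $\mathcal K$-Cartan structure with non-constant $\mathcal K$ (Section 5), hence a taut contact circle satisfying your equation $\alpha\wedge d\alpha=0$, yet no pointwise normalization within the structure can render its coefficients constant (the curvature $\mathcal K$ can even change sign on a surface of higher genus). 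Consequently the structure equations alone never produce the left-invariant coframe your developing-map step requires; the theorem asserts only that the underlying manifold is diffeomorphic to $G\slash\Gamma$, not that the given coframe is equivalent to a Maurer--Cartan coframe, and the two statements are genuinely different. The actual proof in \cite{GG1995} is global and of a different nature: from a taut contact circle on $\Sigma$ one constructs an integrable complex structure on the compact 4-manifold $\Sigma\times\Sph^1$ (integrability being equivalent to tautness), and the admissible diffeomorphism types of $\Sigma$ are then extracted from the Enriques--Kodaira classification of compact complex surfaces. That input from complex surface theory has no substitute at the level of local normal forms for the coframe, which is why your plan cannot be completed as written.
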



\subsection{Standard Cartan structures}
If one denotes by $\alpha^0$ the Maurer-Cartan form on the Lie group $G$, where $G$ is one of the Lie groups in Theorem \ref{thm: classification}, then we can write
\begin{equation}
\alpha^0=\alpha^1 e_1+\alpha^2 e_2+\alpha^3 e_3,
\end{equation}
where $(e_1,e_2,e_3)$ is a basis for the Lie algebra $\mathfrak g$ of the Lie group $G$, 
and we can assume that this basis satisfies the following structure equations
\begin{equation}
\begin{split}
& [e_1,e_2]=-\varepsilon e_3\\
& [e_2,e_3]= -e_1\\
& [e_3,e_1]= -e_2,
\end{split}
\end{equation}
where $\varepsilon=1$ for $SU(2)$, $\varepsilon=-1$ for $\widetilde{SL}_2$, and $\varepsilon=0$ for $\widetilde{E}_2$.  

Equivalently, we obtain the structure equations
\begin{equation}\label{str_eq_for_stand_Cartan}
\begin{split}
& d\alpha^1=\alpha^2\wedge\alpha^3\\
& d\alpha^2=\alpha^3\wedge\alpha^1\\
& d\alpha^3=\varepsilon\alpha^1\wedge\alpha^2,
\end{split}
\end{equation}
where $(\alpha^1,\alpha^2,\alpha^3)$ is the dual coframe of $(e_1,e_2,e_3)$, and $\varepsilon$ has the same meaning as above. These structures are $1$, $0$, $-1$-Cartan structures on $SU(2)$, $\widetilde{E}_2$ and $\widetilde{SL}_2$, respectively. They are called {\it standard Cartan structures}.

For any given discrete subgroup $\Gamma$ of $G$, it is obvious now that $(\alpha^1,\alpha^2)$ is a Cartan structure on $G$ which descends to any left-quotient 
$G\slash\Gamma$.


\subsection{Liouville-Cartan structures}

For a Riemannian surface $(\Lambda,g)$ with local coordinates $(x^1,x^2)$ let us consider its cotangent bundle 
$T^*\Lambda$ with local coordinates $(x^1,x^2,p_1,p_2)$. Then 
\begin{equation}
\begin{split}
& \theta^1=p_1dx^1+p_2dx^2\\
& \theta^2=-p_2dx^1+p_1dx^2.
\end{split}
\end{equation}
is a pair of 1-forms on $T^*\Lambda$.

If we denote now the unit cotangent bundle of $(\Lambda,g)$ by $ST^{*}\Lambda$, then these forms restricted to $ST^*\Sigma$ are called the {\it Liouville-Cartan 1-forms} associated to the Riemannian surface $(\Lambda,g)$.

One can easily see that these forms are in fact the tautological 1-forms on the Riemannian surface $(\Lambda,g)$.

We will see later (Section \ref{sec:5}) how are Liouville-Cartan structures constructed on the quotient manifolds in Theorem \ref{thm: classification}.

\section{The correspondence between Generalized Finsler and Cartan structures}

Let us firstly remark that the structure equations of an $(I,J,K)$-generalized Finsler structure imply
 $(\omega^i\wedge d\omega^j)=A\Omega$, for $i,j\in\{1,2,3\}$, where the structure matrix $A=(a^{ij})$ is given by
\begin{equation}
 A=\begin{pmatrix}
  1 & 0 & 0 \\
  I & 1 & J \\
  0 & 0 & K 
 \end{pmatrix}
\end{equation}
and $\Omega:=\omega^1\wedge\omega^2\wedge\omega^3\neq 0$ is the volume form.
From here we obtain immediately:

%
\begin{proposition}\label{prop: correspondence}
Let $(\Sigma, \omega)$
be an $(I,J,K)$-generalized Finsler structure on a closed 3-manifold 
$\Sigma$, where $\omega=(\omega^1,\omega^2,\omega^3)$.
Then we have
\begin{enumerate}
\item $(\omega^1,\omega^2)$ is a taut contact circle if and only if $I=0$, i.e. $(\Sigma,\omega)$ is in fact a $\mathcal K:=K$-Cartan structure;
\item $(\omega^1,\omega^3)$ is a taut contact circle if and only if $K=1$, i.e. $(\Sigma,\omega)$ is 
an $(I,J,1)$-generalized Finsler structure. This taut contact circle is actually a $\mathcal K$-Cartan structure on $\Sigma$;
\item $(\omega^2,\omega^3)$ is a taut contact circle if and only if $K=1$ and $J=0$. Moreover, $I=0$ and $(\Sigma,\omega)$ is a $1$-Cartan structure.
\end{enumerate}
\end{proposition}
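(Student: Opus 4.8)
The plan is to exploit the explicit structure matrix $A$ computed just before the proposition. Recall that for a contact circle $(\alpha^1,\alpha^2)$ to be a \emph{taut} contact circle, the characterization already stated in the excerpt is that
\begin{equation*}
\alpha^1\wedge d\alpha^1=\alpha^2\wedge d\alpha^2\neq 0,\qquad \alpha^1\wedge d\alpha^2+\alpha^2\wedge d\alpha^1=0.
\end{equation*}
So for each of the three pairs I simply read off the relevant entries of $A$, where $\omega^i\wedge d\omega^j=a^{ij}\,\Omega$ with $\Omega=\omega^1\wedge\omega^2\wedge\omega^3\neq0$.

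First I would treat the pair $(\omega^1,\omega^2)$. The tautness conditions become $a^{11}=a^{22}$ (with this common value nonzero) and $a^{12}+a^{21}=0$. From the matrix, $a^{11}=1$, $a^{22}=1$, $a^{12}=0$, $a^{21}=I$, so the second condition forces $I=0$ and the first is automatic and nonvanishing. Then the Bianchi equation $J=I_2$ gives $J=0$, so $(\Sigma,\omega)$ has invariants $(0,0,K)$, which is by definition a $K$-Cartan structure; I would note that $\omega^1,\omega^2$ are contact forms precisely because $\omega^1\wedge d\omega^1=\omega^2\wedge d\omega^2=\Omega\neq0$. Next, for $(\omega^1,\omega^3)$ the conditions read $a^{11}=a^{33}$ and $a^{13}+a^{31}=0$; here $a^{11}=1$, $a^{33}=K$, $a^{13}=0$, $a^{31}=0$, so both conditions reduce to the single requirement $K=1$, automatically giving an $(I,J,1)$-generalized Finsler structure. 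To see that this taut contact circle is moreover a $\mathcal K$-Cartan structure on $\Sigma$, I would relabel $(\alpha^1,\alpha^2,\eta):=(\omega^1,\omega^3,\omega^2)$ and check directly from the $K=1$ structure equations \eqref{eq: finslerK=1_struct_eq} that $d\alpha^1=\alpha^2\wedge\eta$, $d\alpha^2=\eta\wedge\alpha^1$, and $d\eta=\mathcal K\,\alpha^1\wedge\alpha^2$ hold for a suitable function $\mathcal K$; this is the content of Lemma \ref{Lemma 3.3} applied to the present coframe, with $\mathcal K$ determined by the resulting expression for $d\eta=d\omega^2$.

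For the third pair $(\omega^2,\omega^3)$ the conditions are $a^{22}=a^{33}$ and $a^{23}+a^{32}=0$; since $a^{22}=1$, $a^{33}=K$, $a^{23}=J$, $a^{32}=0$, these force $K=1$ and $J=0$ simultaneously. Then the Bianchi relation $I=-J_2$ (the $K=1$ form \eqref{eq: K=1_Bianchi}) yields $I=0$, so the invariants are $(0,0,1)$ and $(\Sigma,\omega)$ is a $1$-Cartan structure, as claimed. I expect none of these steps to present a genuine obstacle: once the structure matrix $A$ is in hand the whole proposition is essentially a matter of matching entries against the two tautness identities and then invoking the Bianchi equations to propagate the vanishing of one invariant to another. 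The only point demanding slight care is verifying the $\mathcal K$-Cartan claim in part (2), where one must identify the correct permutation of the coframe and confirm that the resulting $d\eta$ indeed has the form $\mathcal K\,\alpha^1\wedge\alpha^2$ rather than picking up an unwanted $\alpha^1\wedge\eta$ term; this is exactly where the special value $K=1$ is used, and it is guaranteed by the structure equations together with Lemma \ref{Lemma 3.3}.
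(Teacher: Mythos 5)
Your treatment of parts (1) and (3), and of the ``taut contact circle $\iff$ $I=0$ (resp.\ $K=1$, resp.\ $K=1$ and $J=0$)'' equivalences in all three parts, is correct and is exactly the paper's route: read the entries of the structure matrix $A$ against the two tautness identities and then propagate vanishing via the Bianchi equations. However, your verification of the $\mathcal K$-Cartan claim in part (2) contains a genuine error. You propose to take $\eta:=\omega^2$ and check $d\alpha^1=\alpha^2\wedge\eta$, $d\alpha^2=\eta\wedge\alpha^1$. This check fails: with $\alpha^1=\omega^1$, $\alpha^2=\omega^3$, $\eta=\omega^2$ one has
\begin{equation*}
\alpha^2\wedge\eta=\omega^3\wedge\omega^2=-\omega^2\wedge\omega^3,
\qquad\text{whereas}\qquad
d\omega^1=-I\,\omega^1\wedge\omega^3+\omega^2\wedge\omega^3,
\end{equation*}
so the two sides disagree whenever $I\neq 0$ (and even in sign when $I=0$); similarly $d\omega^3$ picks up the term $-J\,\omega^1\wedge\omega^3$ not produced by $\omega^2\wedge\omega^1$. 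No permutation of the coframe works in the non-trivial case: the unique $\eta$ furnished by Lemma \ref{Lemma 3.3} is the combination $\eta=I\omega^1+J\omega^3-\omega^2$, which is exactly the form \eqref{eta} in Remark \ref{rem: from GFS to Cartan}.

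There is a second, related gap: you assert that $d\eta=\mathcal K\,\alpha^1\wedge\alpha^2$ ``is the content of Lemma \ref{Lemma 3.3}.'' It is not. Lemma \ref{Lemma 3.3} only produces $\eta$ satisfying the first two Cartan equations; the paper stresses that for a general Cartan structure $d\eta$ can be arbitrary, which is precisely why $\mathcal K$-Cartan is a separate, stronger notion. To finish part (2) one must compute $d\eta$ for $\eta=I\omega^1+J\omega^3-\omega^2$ and invoke the $K=1$ Bianchi identities \eqref{eq: K=1_Bianchi}, $J=I_2$ and $I=-J_2$, which are exactly what kill the $\omega^1\wedge\omega^2$ and $\omega^2\wedge\omega^3$ components of $d\eta$, leaving
\begin{equation*}
d\eta=\bigl(1-I^2-J^2+J_1-I_3\bigr)\,\omega^1\wedge\omega^3
=\mathcal K\,\alpha^1\wedge\alpha^2,
\end{equation*}
i.e.\ the paper's formula \eqref{formula for K}. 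So the structure of your argument is salvageable, but the correct connection form and the Bianchi computation are the actual mathematical content of the $\mathcal K$-Cartan statement, and both are missing from your proposal.
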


The third conclusion follows from the Bianchi equations \eqref{GFS_Bianchi}.

\begin{remark}\label{rem: from GFS to Cartan}
It is straightforward from the second statement in Proposition \ref{prop: correspondence} 
that an $(I,J,1)$-generalized Finsler structure on a 3-manifold $\Sigma$ naturally induces a $\mathcal K$-Cartan structure. Indeed, 
if 
$\Sigma$ admits an $(I,J,1)$-generalized Finsler structure $(\omega^1,\omega^2,\omega^3)$, then the pair of 1-forms $\alpha^1:=\omega^1$, $\alpha^2:=\omega^3$ is a Cartan structure, i.e. a taut contact circle on $\Sigma$ and the conclusion follows from Theorem \ref{thm: classification}. A simple computation shows that there exists a 1-form
\begin{equation}\label{eta}
\alpha^3=I\omega^1+J\omega^3-\omega^2
\end{equation}
such that $(\alpha^1,\alpha^2,\alpha^3)$ is a $\mathcal K$-Cartan structure on $\Sigma$ with the structure function 
\begin{equation}\label{formula for K}
\mathcal K=-I^2 -J^2 +J_1 -I_3+1.
\end{equation} 
Here all the subscripts are with respect to the coframe $\omega$.
\end{remark}

The converse is not so obvious. We are interested in finding if there exist non-trivial $(I,J,1)$-generalized Finsler structures, i.e. structures having the functions $I$ and $J$ not constant on $\Sigma$. We will show in the following sections that the standard $\mathcal K$-Cartan structures induce non-trivial  $(I,J,1)$-generalized Finsler structures.

\begin{proposition}\label{prop4.3}
If $\alpha=(\alpha^1,\alpha^2,\alpha^3)$ is a $\mathcal K$-Cartan structure on $\Sigma$, then the induced coframe
\begin{equation}\label{coframe in I, J}
\begin{split}
\omega^1&= \alpha^1\\
\omega^2&=I\alpha^1+J\alpha^2-\alpha^3\\
\omega^3&= \alpha^2
\end{split}
\end{equation}
is an $(I,J,1)$-generalized Finsler structure on $\Sigma$ if and only if the functions $I,J:\Sigma\to\R$ are solutions of the following directional PDE system 
\begin{equation}\label{PDE wrt Cartan struct} 
\begin{split}
& -I_{\alpha 2}+J_{\alpha 1}=\mathcal K-1\\
& \ I_{\alpha 3}=-J\\
& \ J_{\alpha 3}=I,
\end{split}
\end{equation}
where the subscripts represent directional derivatives with respect to the coframe $\alpha$, i.e.
$df=f_{\alpha 1}\alpha^{1}+f_{\alpha 2}\alpha^{2}+f_{\alpha 3}\alpha^{3}$, for any function $f$ on $\Sigma$.
\end{proposition}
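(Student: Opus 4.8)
The plan is to verify the $(I,J,1)$ structure equations \eqref{eq: finslerK=1_struct_eq} for the coframe \eqref{coframe in I, J} directly, using the $\mathcal K$-Cartan structure equations \eqref{K_Cartan_structure} with $\eta=\alpha^3$. First I would observe that the linear change \eqref{coframe in I, J} from $\alpha$ to $\omega$ has constant determinant equal to $1$, hence is pointwise invertible; inverting it gives
\[
\alpha^1=\omega^1,\qquad \alpha^2=\omega^3,\qquad \alpha^3=I\omega^1+J\omega^3-\omega^2.
\]
In particular $\omega$ is automatically a coframe on $\Sigma$, so the entire content of the proposition is to decide when the three equations \eqref{eq: finslerK=1_struct_eq} hold, \emph{with the same functions $I,J$ that enter the definition of $\omega^2$}.

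Next I would compute $d\omega^1$ and $d\omega^3$. Since $\omega^1=\alpha^1$ and $\omega^3=\alpha^2$, the first two equations of \eqref{K_Cartan_structure} give $d\omega^1=\alpha^2\wedge\alpha^3$ and $d\omega^3=\alpha^3\wedge\alpha^1$; substituting the inverse relations above one obtains exactly
\[
d\omega^1=-I\,\omega^1\wedge\omega^3+\omega^2\wedge\omega^3,\qquad
d\omega^3=\omega^1\wedge\omega^2-J\,\omega^1\wedge\omega^3.
\]
Thus the first and third equations of \eqref{eq: finslerK=1_struct_eq} hold \emph{identically}, and the invariants they produce are automatically the same $I,J$ used in \eqref{coframe in I, J}; no condition on $I,J$ is required for these two. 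This self-consistency is worth isolating, since it is the reason the proposition can be stated purely in terms of the middle equation.

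The only remaining equation is $d\omega^2=-\omega^1\wedge\omega^3$, and this is where all the conditions originate. I would differentiate $\omega^2=I\alpha^1+J\alpha^2-\alpha^3$, use \eqref{K_Cartan_structure} for $d\alpha^1,d\alpha^2,d\alpha^3$, and expand $dI,dJ$ in the $\alpha$-coframe via $df=f_{\alpha 1}\alpha^1+f_{\alpha 2}\alpha^2+f_{\alpha 3}\alpha^3$. Collecting the coefficients of the three basis $2$-forms $\alpha^1\wedge\alpha^2$, $\alpha^2\wedge\alpha^3$, $\alpha^3\wedge\alpha^1$ and comparing with $-\omega^1\wedge\omega^3=-\alpha^1\wedge\alpha^2$, I expect the three coefficient equations to read precisely $-I_{\alpha 2}+J_{\alpha 1}-\mathcal K=-1$, then $I-J_{\alpha 3}=0$, and $I_{\alpha 3}+J=0$, which is the system \eqref{PDE wrt Cartan struct}. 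Since matching coefficients in a fixed coframe is an equivalence, this proves necessity and sufficiency simultaneously.

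The computation is entirely routine, so there is no genuine analytic obstacle: the claim reduces to a pointwise algebraic identity among the components of $d\omega^2$. The only things to watch are the sign and orientation bookkeeping of the wedge products, and the point flagged above, namely that the two automatic equations really reproduce the functions $I,J$ appearing in \eqref{coframe in I, J}. The substance of the proposition lies entirely in identifying which components of $d\omega^2$ must vanish and which must equal $-1$.
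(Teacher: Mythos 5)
Your proposal is correct and is essentially the paper's own argument carried out in full: the paper dismisses the proof as ``a simple computation'' with respect to the coframe $(\alpha^1,\alpha^2,\alpha^3)$, and your direct verification --- inverting the unimodular change of coframe, observing that the equations for $d\omega^1$ and $d\omega^3$ hold identically with the same $I,J$, and matching the coefficients of $\alpha^1\wedge\alpha^2$, $\alpha^2\wedge\alpha^3$, $\alpha^3\wedge\alpha^1$ in $d\omega^2$ against $-\alpha^1\wedge\alpha^2$ --- is exactly that computation, yielding the three equations of \eqref{PDE wrt Cartan struct}. The only cosmetic difference is that the paper invokes the Ricci identities of $\alpha$ as its bookkeeping device, whereas you work directly with the structure equations \eqref{K_Cartan_structure}; both reduce to the same pointwise coefficient comparison, so no gap remains.
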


The proof is a simple computation taking into account the Ricci identities of the coframe 
$(\alpha^1,\alpha^2,\alpha^3)$.

We point out that when regarding the relations \eqref{PDE wrt Cartan struct} as a directional PDE, with the unknown functions $I$, $J$, with respect to the $\mathcal K$-Cartan structure $\alpha$, then the involutivity of this PDE system can be studied by means of Cartan-K\"ahler theorem (see for example \cite{IL2003}). Indeed, Cartan-K\"ahler theory shows that the PDE system \eqref{PDE wrt Cartan struct} is involutive with solutions depending on 2 functions of 2 variables, therefore on the 3-manifold $\Sigma$ there are $(I,J,1)$-generalized Finsler structures depending on 2 functions of 2 variables in the sense of Cartan-K\"ahler theorem as pointed out  in \cite{Br1996}. However this is a quite rough estimation including local, global, as well as trivial solutions.

\begin{remark}
Let $(M,F)$ be a classical compact Finsler surface and let us denote its indicatrix bundle by $\Sigma$. It follows that $\Sigma$ is a compact 3-manifold that admits a natural $(I,J,K)$-generalized Finsler structure induced as follows. Let us denote by $\omega=(\omega^{1},\omega^{2},\omega^{3})$ the $g$-orthonormal coframe on $\Sigma$ induced by the Finslerian structure $F$ (see \cite{BCS2000} for details), where $g$ is the Hessian matrix defined in \eqref{Hessian matrix}. Then $\omega$ satisfies structure equations of type \eqref{finsler_struct_eq} for some functions $I$, $J$ and $K$ on $\Sigma$. These are called the Cartan scalar, the Landsberg curvature and the flag curvature of the Finsler structure $(M,F)$, respectively, being in the same time the invariants of the Finsler structure $F$ in the sense of Cartan's equivalence problem. The functions $I$, $J$ and $K$ are uniquely determined by the fundamental function $F$ only. 

If $(M,F)$ is a Finsler surface of positive constant flag curvature, then the naturally induced generalized Finsler structure on the indicatrix bundle total space $\Sigma$ is an $(I,J,1)$-structure that satisfies all the conditions in Theorem \ref{thm: GFS equivalence to Finsler}. In this case, 
the associated $\mathcal K$-Cartan structure $(\alpha^1,\alpha^2, \alpha^3)$ has some supplementary properties, namely, the leaves of the foliation $\mathcal F=\{\alpha^2=0,\alpha^3+\varphi=0\}$
satisfy the conditions in Theorem \ref{thm: GFS equivalence to Finsler}, where $\varphi:=I\alpha^1+J\alpha^2$ is a 1-form on $\Sigma$ that satisfies the structure equation
\begin{equation}\label{phi_struct_eq}
d\varphi=(\mathcal K-1)\alpha^1\wedge\alpha^2.
\end{equation}
The integral curves of the exterior system $\{\alpha^2=0,\alpha^3+\varphi=0\}$ are called $\varphi$-geodesics in \cite{Br2002}.

Conversely, the construction above can be used to associate an $(I,J,1)$-generalized Finsler structure to any given $\mathcal K$-Cartan structure $(\alpha^1,\alpha^2,\alpha^3)$. Remark that the indicatrix foliation of this $(I,J,1)$-generalized Finsler structure coincides with the foliation 
$\mathcal F$ given above. In the case when the leaves of this foliation satisfy the conditions in Theorem \ref{thm: GFS equivalence to Finsler} we obtain a classical Finsler structure.
\end{remark}
\begin{remark}
We also remark that the left or right invariant $(I,J,1)$-generalized Finsler structures on a Lie group $G$ must have the invariants $I$ and $J$ constant functions on $G$ and implicitly on $G/\Gamma$,  i.e. we obtain only trivial cases that do not interest us. 

\end{remark}

\section{Cartan structures induced $(I,J,1)$-Generalized Finsler structures}\label{sec:5}

We are going to give here a general construction of $(I,J,1)$-generalized Finsler structures induced by $\mathcal K$-Cartan structures.

Indeed, let us consider a $\mathcal K$-Cartan structure $(\alpha^1,\alpha^2,\alpha^3)$, on a closed 3-manifold $\Sigma$, with $\mathcal K$ not necessarily constant. Then from Proposition \ref{prop4.3} it follows that the coframe
\begin{equation}\label{coframe induced IJ1GFS}
\begin{split}
\omega^1&=\alpha^1\\
\omega^2&=\varphi-\alpha^3\\
\omega^3&=\alpha^2
\end{split}
\end{equation}
is an $(I,J,1)$-generalized Finsler structure if and only if the 1-form $\varphi=I\alpha^1+J\alpha^2$ on $\Sigma$ satisfies the structure equation \eqref{phi_struct_eq}.

Therefore, in order to assure the existence of a nontrivial $(I,J,1)$-generalized Finsler structure on 
one of the quotient manifolds $\Sigma=G\slash\Gamma$ given in Theorem \ref{thm: classification}
it suffices to find a 1-form $\varphi$ on $G$ that satisfies the condition:

\begin{condition}\label{C1}
\begin{tabular}{ll}
\  & \ \\
\ & \ 1. $\varphi$ is $\Gamma$-invariant, and\\
\ & \ 2. $\varphi$ satisfies the structure equation \eqref{phi_struct_eq} with non-constant coefficients.
\end{tabular}
\end{condition}

In this case, we have
\begin{proposition}\label{constr 1}
Let $G\in\{SU(2),\widetilde{E}_{2},\widetilde{SL}_{2}\}$ be a Lie group, $\Gamma\subset G$ a discrete subgroup of $G$, $\alpha=(\alpha^{1},\alpha^{2},\alpha^{3})$ a $\mathcal K$-Cartan structure on $G\slash\Gamma$ and $\varphi$ a 1-form on $G$  that satisfies the condition C \ref{C1}. Then, the coframe 
\eqref{coframe induced IJ1GFS} is an $(I,J,1)$-generalized Finsler structure on 
$\Sigma=G\slash \Gamma$.
\end{proposition}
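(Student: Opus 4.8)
The plan is to reduce Proposition \ref{constr 1} directly to Proposition \ref{prop4.3}, treating the statement as essentially an assembly of machinery already in place. The key observation is that the coframe \eqref{coframe induced IJ1GFS} is exactly the coframe \eqref{coframe in I, J} once we set $\varphi = I\alpha^1 + J\alpha^2$, so that $\omega^2 = \varphi - \alpha^3 = I\alpha^1 + J\alpha^2 - \alpha^3$. Thus I do not need to recompute any structure equations from scratch: Proposition \ref{prop4.3} already tells me that \eqref{coframe in I, J} is an $(I,J,1)$-generalized Finsler structure if and only if the functions $I,J$ solve the directional PDE system \eqref{PDE wrt Cartan struct}. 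The entire task therefore becomes showing that Condition C \ref{C1} furnishes such a solution on the quotient $\Sigma = G/\Gamma$.

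First I would verify that the single structure equation \eqref{phi_struct_eq}, namely $d\varphi = (\mathcal K - 1)\alpha^1 \wedge \alpha^2$ with $\varphi = I\alpha^1 + J\alpha^2$, is equivalent to the three scalar equations \eqref{PDE wrt Cartan struct}. To do this I would compute $d\varphi$ by expanding $d(I\alpha^1 + J\alpha^2)$ using the Leibniz rule together with the $\mathcal K$-Cartan structure equations \eqref{K_Cartan_structure}, writing $dI$ and $dJ$ in terms of the coframe $\alpha$ via $df = f_{\alpha 1}\alpha^1 + f_{\alpha 2}\alpha^2 + f_{\alpha 3}\alpha^3$. Collecting the coefficients of $\alpha^1\wedge\alpha^2$, $\alpha^2\wedge\alpha^3$, and $\alpha^3\wedge\alpha^1$ and matching against $(\mathcal K - 1)\alpha^1\wedge\alpha^2$ should yield precisely the system $-I_{\alpha 2} + J_{\alpha 1} = \mathcal K - 1$, $I_{\alpha 3} = -J$, $J_{\alpha 3} = I$. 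This is a short bookkeeping computation, and it is the heart of why the hypothesis \eqref{phi_struct_eq} is the right one.

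The second ingredient is descent to the quotient. The $\mathcal K$-Cartan structure $\alpha$ is given on $G/\Gamma$ by hypothesis, but $\varphi$ is produced on $G$; I would invoke part 1 of Condition C \ref{C1}, the $\Gamma$-invariance of $\varphi$, to conclude that $\varphi$, hence the functions $I$ and $J$ defined by $\varphi = I\alpha^1 + J\alpha^2$, and hence the whole coframe \eqref{coframe induced IJ1GFS}, descend to well-defined objects on $\Sigma = G/\Gamma$. Since $d$ commutes with the quotient projection, the structure equation \eqref{phi_struct_eq} continues to hold downstairs, so the PDE system \eqref{PDE wrt Cartan struct} is satisfied on $\Sigma$ and Proposition \ref{prop4.3} applies verbatim. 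Part 2 of the condition, that the coefficients be non-constant, is what guarantees the resulting structure is non-trivial in the sense of the earlier discussion, though it is not needed merely to obtain an $(I,J,1)$-structure.

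The main obstacle, such as it is, is conceptual rather than computational: one must be careful that $\varphi$ and the induced coframe genuinely pass to the quotient, which is exactly where $\Gamma$-invariance enters, and that the equivalence between the single form equation \eqref{phi_struct_eq} and the three-equation system \eqref{PDE wrt Cartan struct} is clean. Once these two points are settled, the proposition is immediate from Proposition \ref{prop4.3}, so I expect the write-up to be brief, with the bulk of the (already routine) work having been absorbed into the verification of the equivalence of the structure equations.
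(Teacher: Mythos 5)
Your proposal is correct and follows essentially the same route as the paper: the paper likewise treats Proposition \ref{constr 1} as an immediate consequence of Proposition \ref{prop4.3}, having already observed (in the paragraph introducing \eqref{coframe induced IJ1GFS}) that for $\varphi = I\alpha^1 + J\alpha^2$ the coframe is an $(I,J,1)$-generalized Finsler structure if and only if $\varphi$ satisfies \eqref{phi_struct_eq}, with $\Gamma$-invariance ensuring descent to $G/\Gamma$. Your explicit verification that $d\varphi=(\mathcal K-1)\alpha^1\wedge\alpha^2$ unpacks into the system \eqref{PDE wrt Cartan struct} is exactly the computation the paper leaves implicit, and it checks out.
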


We will compute explicitly $\varphi$ in some special cases in Section \ref{sec:4} showing that finding $\varphi$ and verifying condition C \ref{C1} is far from being trivial. 

In this section we give a rather simple general theoretical construction of such $(I,J,1)$-generalized Finsler structures that will provide a proof for our main result Theorem  \ref{thm: GFS classif}.

Let  $(\alpha^1,\alpha^2,\alpha^3)$ be a $\mathcal K$-Cartan structure and let us consider the conformal $\mathcal{\widetilde K}$-Cartan structure 
$(\widetilde \alpha^1,\widetilde\alpha^2,\widetilde\alpha^3)$ given by
\begin{equation}
\begin{split}
\widetilde \alpha^1& =v\alpha^1\\
\widetilde  \alpha^2& =v\alpha^2\\
\widetilde  \alpha^3& =\alpha^3-*d\log v,
\end{split}
\end{equation}
where $\ast$ denotes the Hodge star operator and $*d\log v=-\dfrac{1}{v}v_{\alpha 2}\alpha^1+\dfrac{1}{v}v_{\alpha 1}\alpha^2$, $v>0$.

Notice that in fact only the pairs of 1-forms $(\alpha^{1},\alpha^{2})$ and 
$(\widetilde\alpha^{1},\widetilde\alpha^{2})$, which define the corresponding Cartan structures, are conformal, i.e. $(\widetilde\alpha^{1},\widetilde\alpha^{2})=v(\alpha^{1},\alpha^{2})$. The expression of 
$\widetilde\alpha^{3}$ follows from Lemma \ref{Lemma 3.3}.

If we take into account that relation \eqref{phi_struct_eq} is equivalent to 
\begin{equation}\label{phi's structure eqn 2}
d\varphi=d\alpha^3-\alpha^1\wedge\alpha^2,
\end{equation}
then by putting 
\begin{equation}
\widetilde\varphi:=*d\log v-\varphi
\end{equation}
it follows
\begin{equation}
d\widetilde\varphi=d*d\log v-d\varphi=
d(\alpha^3-\widetilde  \alpha^3)-d\varphi=-d\varphi+d\alpha^3-\mathcal{\widetilde K}\widetilde\alpha^1\wedge\widetilde \alpha^2
\end{equation}
and taking into account of \eqref{phi's structure eqn 2} we get
\begin{equation}
d\widetilde\varphi=\alpha^1\wedge\alpha^2-\mathcal{\widetilde K}\widetilde\alpha^1\wedge\widetilde \alpha^2,
\end{equation}
or, equivalently
\begin{equation}\label{alg eq for v}
d\widetilde\varphi+\mathcal{\widetilde K}\widetilde\alpha^1\wedge\widetilde \alpha^2
=\dfrac{1}{v^2}(\widetilde \alpha^1\wedge\widetilde \alpha^2).
\end{equation}

This relation still looks like a differential equation, but we would like to regard it as an algebraic equation in $v$. In order to do this we need an 1-form $\widetilde\varphi$ whose differential 
$d\widetilde\varphi$ is spanned only by 
$\widetilde\alpha^1\wedge\widetilde \alpha^2$ and that satisfies

\begin{condition}\label{C2}
\begin{tabular}{ll}
\  & \ \\
\ & \ 1. $\widetilde\varphi$ is $\Gamma$-invariant, and\\
\ & \  2. $d\widetilde\varphi+\mathcal{\widetilde K}\widetilde\alpha^1\wedge\widetilde \alpha^2>0$.
\end{tabular}
\end{condition}

From Proposition \ref{constr 1} we obtain
\begin{proposition}\label{constr 2}
Let $G\in\{SU(2),\widetilde{E}_{2},\widetilde{SL}_{2}\}$ be a Lie group, $\Gamma\subset G$ a discrete subgroup of $G$, $\widetilde\alpha$ a $\widetilde{\mathcal K}$-Cartan structure on 
$G\slash\Gamma$ and $\widetilde\varphi$ a 1-form on $G$  that satisfies the condition C \ref{C2}. Then, the coframe 
\begin{equation}
\begin{split}
& \omega^1=\frac{1}{v}\widetilde\alpha^1\\
& \omega^2=-\widetilde\varphi-\widetilde\alpha^3\\
& \omega^3=\frac{1}{v}\widetilde\alpha^2
\end{split}
\end{equation}
is an $(I,J,1)$-generalized Finsler structure on 
$\Sigma=G\slash \Gamma$, where $v$ is the scalar function obtained by solving the algebraic equation \eqref{alg eq for v}.
\end{proposition}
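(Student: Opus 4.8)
The plan is to derive the statement from Proposition \ref{constr 1} by undoing the conformal change that produced $\widetilde\alpha$ and $\widetilde\varphi$. First I would use condition C \ref{C2} to produce the scalar $v$. Since $\widetilde\varphi$ is chosen so that $d\widetilde\varphi$ is a multiple of $\widetilde\alpha^1\wedge\widetilde\alpha^2$, I may write
\begin{equation*}
d\widetilde\varphi+\mathcal{\widetilde K}\widetilde\alpha^1\wedge\widetilde\alpha^2=h\,\widetilde\alpha^1\wedge\widetilde\alpha^2,
\end{equation*}
where $h$ is a smooth function on $G$, and the second part of C \ref{C2} says precisely that $h>0$ everywhere. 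Setting $v:=h^{-1/2}$ then solves the algebraic equation \eqref{alg eq for v} and yields a smooth, strictly positive function. Because $\widetilde\alpha$, $\mathcal{\widetilde K}$ and $\widetilde\varphi$ are all $\Gamma$-invariant (the first two since $\widetilde\alpha$ is a Cartan structure on $G\slash\Gamma$, the last by C \ref{C2}), $h$ and hence $v$ are $\Gamma$-invariant, so every form built from them below descends to $\Sigma=G\slash\Gamma$.

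With $v$ in hand I would reconstruct a $\mathcal K$-Cartan structure and the matching $\varphi$. Define
\begin{equation*}
\alpha^1:=\frac{1}{v}\widetilde\alpha^1,\qquad \alpha^2:=\frac{1}{v}\widetilde\alpha^2,\qquad \alpha^3:=\widetilde\alpha^3+{*}d\log v,
\end{equation*}
which is just the inverse of the conformal change introduced above; since a conformal change carries a Cartan structure to a Cartan structure (Lemma \ref{Lemma 3.3}), $(\alpha^1,\alpha^2,\alpha^3)$ is again a $\mathcal K$-Cartan structure for an appropriate smooth function $\mathcal K$. Putting $\varphi:={*}d\log v-\widetilde\varphi$ one checks that $\omega^2=\varphi-\alpha^3=-\widetilde\varphi-\widetilde\alpha^3$, while $\omega^1=\alpha^1$ and $\omega^3=\alpha^2$, so the stated coframe is exactly the coframe \eqref{coframe induced IJ1GFS} attached to $(\alpha,\varphi)$. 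It then remains to verify that $\varphi$ satisfies the structure equation \eqref{phi_struct_eq}. This is obtained by reading the computation preceding \eqref{alg eq for v} in the opposite direction: from $d\widetilde\varphi+\mathcal{\widetilde K}\widetilde\alpha^1\wedge\widetilde\alpha^2=v^{-2}\widetilde\alpha^1\wedge\widetilde\alpha^2=\alpha^1\wedge\alpha^2$, together with the definitions of $\alpha^3$ and $\varphi$, I would deduce $d\varphi=d\alpha^3-\alpha^1\wedge\alpha^2=(\mathcal K-1)\alpha^1\wedge\alpha^2$, which is \eqref{phi_struct_eq}. Hence $(\alpha,\varphi)$ meets the hypotheses of Proposition \ref{constr 1}, and that proposition immediately gives that the coframe is an $(I,J,1)$-generalized Finsler structure on $\Sigma$.

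The main obstacle, and the real content of condition C \ref{C2}, is guaranteeing that $v$ exists as a globally defined smooth function. Two things must be controlled: that $d\widetilde\varphi$ has no component along $\widetilde\alpha^1\wedge\widetilde\alpha^3$ or $\widetilde\alpha^2\wedge\widetilde\alpha^3$, so that \eqref{alg eq for v} is genuinely an algebraic rather than a differential equation in $v$; and that the coefficient $h$ never vanishes and keeps a fixed sign, so that $h^{-1/2}$ is smooth and real. The positivity clause of C \ref{C2} secures the latter, while $\Gamma$-invariance (the first clause) is exactly what lets the resulting $(I,J,1)$-structure descend from $G$ to the quotient $G\slash\Gamma$. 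Once these are in place, the remaining verifications are the routine exterior-algebra manipulations already carried out in the passage leading to \eqref{alg eq for v}.
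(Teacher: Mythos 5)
Your proof is correct and follows essentially the same route as the paper: the paper derives equation \eqref{alg eq for v} by the forward conformal change and then states the proposition as an immediate consequence of Proposition \ref{constr 1}, which is exactly the reversal you carry out (recover $v$ from the positivity clause of C \ref{C2}, undo the conformal change to get $(\alpha,\varphi)$, check \eqref{phi_struct_eq}, and invoke Proposition \ref{constr 1}). You simply make explicit the steps the paper leaves implicit, including the $\Gamma$-invariance bookkeeping.
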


The simplest way to do this is to consider $(I,J,1)$-generalized Finsler structures induced by 
Liouville-Cartan structures. Let us briefly recall the construction of Liouville-Cartan structures on the quotient manifolds listed in Theorem \ref{thm: classification} (see \cite{GG1995} for details).

Assume that $(\Lambda_0,g)$ is a closed, oriented Riemannian surface (the non-oriented case can be also treated in a slightly different manner) and denote by $Isom_o(\Lambda_0,g)$ its full group of orientation-preserving isometries. If $\mathcal F\subset Isom_o(\Lambda_0,g)$ is a finite group of orientation-preserving isometries of $\Lambda_0$, then $d\mathcal F$ is a finite group of isometries of $ST\Lambda_0$, where it acts freely. Here, $d\mathcal F$ is the set of differentials of the elements of $\mathcal F$, and $ST\Lambda_0$ the unit sphere bundle of $(\Lambda_0,g)$. Hence, the quotient manifold $ST\Lambda_0\slash d\mathcal F$ is a canonical Seifert fibration 
$ST\Lambda_0\slash d\mathcal F\to \Lambda$
over the 2-dimensional orbifold $\Lambda:=\Lambda_0\slash \mathcal F$.

A compact 3-manifold $\Sigma$ can be obtained from this Seifert fibration by simply taking the cover map
\begin{equation}
\Sigma\to ST\Lambda_0\slash d\mathcal F,
\end{equation}
where $\Lambda_0$ is a surface of genus 0, 1 or greater than 1, if and only if $\Sigma$ is a left quotient of $\Sph^3$, $\widetilde E_2$ or $\widetilde{SL}_2$, respectively. Indeed, if $\widetilde \Lambda$ is one of the 2-dimensional Riemannian space form models $\Sph^2$, $\R^2$ or $\mathcal H^2$, respectively, and if $\Sigma=G\slash\Gamma$ for some discrete, cocompact subgroup $\Gamma$ of $G$, then there is a canonical projection $G\to Isom_o(\widetilde \Lambda)$ that maps $\Gamma$ onto its image 
$\Gamma'$. 

If the image $\Gamma'$ is discrete, then $G\slash\Gamma$ has a canonical Seifert fibration 
$G\slash\Gamma\to \Lambda$ over the 2-dimensional orbifold $\Lambda:=\widetilde\Lambda\slash \Gamma$. 

\begin{remark}
It is known that for $\Sph^3$ and $\widetilde{SL}_2$, the image of any discrete subgroup $\Gamma$ under the canonical projection described above is a discrete isometry subgroup $\Gamma'$ on $\Sph^{2}$ and $\mathbb H^{2}$, respectively, but this is not true anymore for $\widetilde E_2$. However, we can work only with cocompact discrete subgroups $\Gamma$ of $G$ whose image $\Gamma'$ is always discrete. Such discrete subgroups $\Gamma$ will be called {\it admissible}. 
\end{remark}

It is remarkable that there always is another description of $\Lambda$, namely $\Lambda=\Lambda_0\slash \mathcal F$, where $(\Lambda_0,g_0)$ is a closed, orientable 2-manifold with some constant curvature metric $g_0$, and $\mathcal F\subset Isom_o(\Lambda_0,g_0)$. 

We also point out that for any admissible subgroup $\Gamma$ of $G$ there exists at least one pair $(\Lambda_0,\mathcal F)$ such that $\Sigma=G\slash\Gamma\to ST\Lambda_0\slash d\mathcal F$ is a covering map. Then, the tautological forms $\{\alpha^1,\alpha^2\}$ of $(\Lambda_0,g)$ determine a Liouville-Cartan structure on $G\slash\Gamma$ that depends on $\Gamma$ only. In other words, different pairs $(\Lambda_0,\mathcal F)$ can yield the same Cartan structure, as the orbifold $\Lambda$ can have different descriptions of the type $\Lambda_0\slash \mathcal F$. 

Therefore, let us consider a closed (oriented) Riemannian surface $(\Lambda_0,g)$ (not necessarily of constant sectional curvature $\mathcal K$) and denote $(\eta^1,\eta^2)$ the $g$-orthonormal coframe on $\Lambda_0$. The structure equations on $\Lambda_0$ are 
\begin{equation}\label{struct eq downstairs}
d\eta^1=a\eta^1\wedge\eta^2,\quad d\eta^2=b\eta^1\wedge\eta^2,
\end{equation}
where $a$, $b$ are the structure functions on $\Lambda_0$. It follows that the Levi-Civita connection form is 
$\eta^3=-a\eta^1-b\eta^2$ and the sectional curvature $\mathcal K=a_{\eta 2} -a^2 -b_{\eta 1} -b^2$, where the subscripts are directional derivatives with respect to the coframe $(\eta^1,\eta^2)$. 

We are led by these arguments to the following {\it construction method}.

\bigskip

We consider {\it any} 1-form $\phi:=f\eta^1+g\eta^2$ on the Riemannian surface 
$(\Lambda_0,g)$ that satisfies the condition
\begin{condition}\label{C3}
\begin{tabular}{ll}
\  & \ \\
\ & \ 1. $\phi$ is $\mathcal F$-invariant, and\\
\ & \  2. $-f_{\eta 2}+af+bg+g_{\eta 1}+\mathcal K>0$,\\
& where $\mathcal F$   is a discrete subgroup of $Isom_0(\Lambda_0,g)$.  
\end{tabular}
\end{condition}

The relation 
\begin{equation}\label{alg_rel_for v downstairs}
d\phi+\mathcal K\eta^1\wedge\eta^2=\frac{1}{v^2}\eta^1\wedge\eta^2
\end{equation}
corresponds to \eqref{alg eq for v} downstairs on $\Lambda_0$.

Remarking that $d\phi=(-f_{\eta 2}+af+bg+g_{\eta 1})\eta^1\wedge\eta^2$, we obtain the formula 2 in the condition C \ref{C3} above
and therefore 
\begin{equation}\label{v general}
v=\dfrac{1}{\sqrt{-f_{\eta 2}+af+bg+g_{\eta 1}+\mathcal K}}.
\end{equation}

All geometrical objects like functions $a$, $b$, $f$, $g$, etc., $\mathcal K$, $v$ and 1-forms
$\eta^1$, $\eta^2$, $\phi$ defined on $\Lambda_0$ naturally lift by $\nu^*$ to $ST\Lambda_0$, where $\nu:ST\Lambda_0\to\Lambda_0$ is the unit sphere bundle of $(\Lambda_0,g)$ 
(we denote $\varphi$, $\widetilde\varphi$ forms \lq\lq upstairs" on $\Sigma$ and by $\phi$ forms \lq\lq downstairs" on $\Lambda_0$).

Then, the coframe
\begin{equation}\label{induced GFS} 
\begin{split}
& \omega^1=\frac{1}{v}\alpha^1\\
& \omega^2=-\nu^*\phi-\alpha^3\\
& \omega^3=\frac{1}{v}\alpha^2
\end{split}
\end{equation}
is an $(I,J,1)$-generalized Finsler structure on $ST\Lambda_0\slash d\mathcal F$ that can be pulled back to $\Sigma$ by the covering map $\Sigma\to ST\Lambda_0\slash d\mathcal F$, where $v$ is given by \eqref{v general},
$\nu:\mathcal F(\Lambda_0)\to \Lambda_0$ is the orthonormal frame bundle of $(\Lambda_0,g)$, which is locally diffeomorphic with $\Sigma$ (we denote here the projection $\nu$ by the same letter with the projection of the unit sphere bundle $ST\Lambda_0\to \Lambda_0$ from obvious reasons), $(\alpha^1,\alpha^2,\alpha^3)$ is the Liouville-Cartan structure induced from $(\Lambda_0, g)$ and $\mathcal F$ on $ST\Lambda_0\slash d\mathcal F$. Taking now into account that 
$v(*d\log v-\phi)=I\eta^1+J\eta^2$ and \eqref{alg_rel_for v downstairs}, the invariants defined on $\Lambda_0$ are 
\begin{equation}\label{IJ generali}
I=-fv-v_{\eta 2},\quad J=-gv+v_{\eta 1},
\end{equation}
where $v$ is again given by \eqref{v general}. In order to keep notations simple we denote $\nu^*I$ and $\nu^*J$ with same letters $I$ and $J$, respectively.

Therefore we obtain

\begin{proposition}\label{construction prop}
Let 
$(\Lambda_0,g)$ be a Riemannian surface of genus 0, 1 or greater than 1,  $\mathcal F\subset Isom_o(\Lambda_0)$ be a finite generated orientation preserving isometry subgroup of $(\Lambda_0,g)$, and let $\Sigma$ be the covering space of $ST\Lambda_0\slash d\mathcal F$ which is diffeomorphic to $G\slash\Gamma$, where $G$ is one of $\Sph^3$, $\widetilde E_2$ or $\widetilde{SL}_2$, respectively.
 
 Then, the Liouville-Cartan structure of $ST\Lambda_0$ induces an $(I,J,1)$-generalized Finsler structure on $\Sigma$ defined by \eqref{induced GFS}, where $\phi$ is a 1-form on $\Lambda_{0}$ that satisfies condition C \ref{C3}. The structure functions $I$, $J$ are given in  \eqref{IJ generali}, where $v$ is the function in \eqref{v general}.
\end{proposition}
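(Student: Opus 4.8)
The plan is to obtain Proposition \ref{construction prop} as a direct specialization of Proposition \ref{constr 2}, with the Liouville-Cartan structure playing the role of the $\widetilde{\mathcal K}$-Cartan structure $\widetilde\alpha$ and with $\nu^*\phi$ playing the role of $\widetilde\varphi$; the argument then reduces to checking the hypotheses of Proposition \ref{constr 2} and identifying the invariants. The geometric input is that the tautological 1-forms $\alpha^1,\alpha^2$ together with the Levi-Civita connection form $\alpha^3$ on $ST\Lambda_0\cong\mathcal F(\Lambda_0)$ constitute a $\mathcal K$-Cartan structure in the sense of \eqref{K_Cartan_structure}, with $\mathcal K$ the $\nu^*$-lift of the Gauss curvature of $(\Lambda_0,g)$; this follows from the surface structure equations \eqref{struct eq downstairs} and the curvature formula $\mathcal K=a_{\eta 2}-a^2-b_{\eta 1}-b^2$ upon passing to the frame bundle, where in a local trivialization with fibre angle $\psi$ one has $\alpha^1\wedge\alpha^2=\nu^*(\eta^1\wedge\eta^2)$. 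Since $\mathcal F$ acts by orientation-preserving isometries, $d\mathcal F$ preserves this coframe, the structure descends to $ST\Lambda_0\slash d\mathcal F$, and it pulls back to a $\mathcal K$-Cartan structure on the cover $\Sigma=G\slash\Gamma$.

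Next I would verify that $\widetilde\varphi:=\nu^*\phi$ satisfies condition C \ref{C2}. The key point, and the reason for pulling back from the base, is that $d\widetilde\varphi=\nu^*(d\phi)=\nu^*\big((-f_{\eta 2}+af+bg+g_{\eta 1})\,\eta^1\wedge\eta^2\big)$ is automatically a multiple of $\alpha^1\wedge\alpha^2$, which is exactly the span requirement imposed just before C \ref{C2}. The $\Gamma$-invariance (part 1) is inherited from the $\mathcal F$-invariance of $\phi$ (part 1 of C \ref{C3}) via the compatible actions of $\mathcal F$, $d\mathcal F$ and $\Gamma$, while the positivity $d\widetilde\varphi+\mathcal K\,\alpha^1\wedge\alpha^2=\nu^*\big((-f_{\eta 2}+af+bg+g_{\eta 1}+\mathcal K)\,\eta^1\wedge\eta^2\big)>0$ (part 2) holds precisely by part 2 of C \ref{C3}. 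Solving the algebraic equation \eqref{alg eq for v}, equivalently its downstairs form \eqref{alg_rel_for v downstairs}, then produces the positive smooth function $v$ of \eqref{v general}.

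With C \ref{C2} verified, Proposition \ref{constr 2} gives at once that the coframe \eqref{induced GFS} is an $(I,J,1)$-generalized Finsler structure on $\Sigma$. To read off the invariants I would combine $\widetilde\varphi=*d\log v-\varphi$ with $\widetilde\varphi=\nu^*\phi$ and $\varphi=I\alpha^1+J\alpha^2$; after accounting for the conformal rescaling $\alpha^i=\tfrac{1}{v}\widetilde\alpha^i$ this reads, downstairs on $\Lambda_0$, as $I\eta^1+J\eta^2=v(*d\log v-\phi)$. Expanding $*d\log v=-\tfrac{1}{v}\,v_{\eta 2}\,\eta^1+\tfrac{1}{v}\,v_{\eta 1}\,\eta^2$ and $\phi=f\eta^1+g\eta^2$ and matching coefficients yields $I=-fv-v_{\eta 2}$ and $J=-gv+v_{\eta 1}$, which is \eqref{IJ generali}.

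I expect the only genuine difficulty to lie in the bundle bookkeeping of the lift $\nu^*$ and the descent to the quotient: one must check that the locally defined orthonormal data $\eta^1,\eta^2,\eta^3$ on $\Lambda_0$ assemble into the globally defined Liouville-Cartan coframe on $ST\Lambda_0$, that the globally defined 1-form $\phi$ gives a globally defined $\nu^*\phi$ with the span property above, and that $\mathcal F$-invariance downstairs is exactly what guarantees $\Gamma$-invariance (hence descent to $G\slash\Gamma$ and pullback to $\Sigma$) upstairs. Once these identifications and the description of the Liouville-Cartan structure as a $\mathcal K$-Cartan structure are in place, everything else is the routine computation indicated above.
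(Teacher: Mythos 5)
Your proposal is correct and follows essentially the same route as the paper: the paper's own justification is precisely the construction preceding the proposition, namely specializing Proposition \ref{constr 2} by taking the Liouville--Cartan structure as the $\widetilde{\mathcal K}$-Cartan structure, $\nu^*\phi$ as $\widetilde\varphi$ (so that condition C \ref{C2} reduces to C \ref{C3} via the downstairs equation \eqref{alg_rel_for v downstairs}), solving algebraically for $v$ as in \eqref{v general}, and reading off the invariants from $v(*d\log v-\phi)=I\eta^1+J\eta^2$. Your write-up is in fact somewhat more explicit than the paper's (e.g.\ in verifying that the lifted coframe is a $\mathcal K$-Cartan structure and in tracking the $\mathcal F$-, $d\mathcal F$-, $\Gamma$-invariance bookkeeping), but the underlying argument is identical.
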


\begin{remark}\begin{enumerate}
\item
If needed, one can choose a standard Cartan structure for $(\alpha^1,\alpha^2,\alpha^3)$ obtained as the Liouville-Cartan structure from the space form $\Lambda_0$ and construct the corresponding $(I,J,1)$-generalized Finsler structure as in the Proposition \ref{construction prop}.
\item 
As explained already, due to the $\mathcal F$-invariance condition, the 1-form $\phi$, functions $v$, $f$, $g$, etc. used in the formulas above live in fact on the set $\Lambda=\Lambda_{0}\slash \mathcal F$
which is an orbifold in general. However, in order to avoid complications, we construct examples in Sections 6.2, 6.3 only in the cases when $\Lambda$ is in fact manifold. 
\end{enumerate}
\end{remark}

\begin{proof}[Proof of Theorem \ref{thm: GFS classif}]
Assume that $\Sigma$ admits an $(I,J,1)$-generalized Finsler structure 
$(\omega^1,\omega^2,\omega^3)$. Then $\alpha^1:=\omega^1$, $\alpha^2:=\omega^3$ is a Cartan structure, hence it is a taut contact circle on $\Sigma$ and therefore from Theorem \ref{thm: classification}
the conclusion follows (see Remark \ref{rem: from GFS to Cartan}). 

Conversely, if $\Sigma$ is one of the quotient manifolds given in hypothesis, then from  Theorem \ref{thm: classification} results that $\Sigma$ must carry a Cartan structure (for instance the one induced from Liouville-Cartan structures on a Riemannian surface $(\Lambda, g)$) which we denote by $(\alpha^1,\alpha^2,\alpha^3)$ with the structure function $\mathcal{K}$. If we take any 1-form 
$\varphi:=\nu^*(\phi)$, for any $\phi$ on $\Lambda_0$ that satisfies condition \ref {C3}, 
we obtain that the coframe \eqref{induced GFS} is a non-trivial
$(I,J,1)$-generalized Finsler structure on $\Sigma$ with the structure functions \eqref{IJ generali}. 

$\qedd$
\end{proof}

\subsection{A classical Finsler structure}\label{sec:5.1}
Using a similar argument as in \cite{Br1996}, we use this construction to obtain an $(I,J,1)$-generalized Finsler structure on 
$\Sigma$ that induces a classical Finsler structure on a surface $M$ by the double fibration
\setlength{\unitlength}{1cm} 
\begin{center}
\begin{picture}(7, 3.5)
\linethickness{0.075mm} 
\put(2.5,2.5){$\Sigma\simeq \Sph^{3}$}
\put(2.9,2.4){\vector(1,-1){1.5}}
\put(4.5,0.5){$M\simeq\Sph^{2}$}
\put(4.6,0){$\|$}
\put(3.5,-0.5){$\Sigma\slash\{\omega^{1}=0,\omega^{2}=0\}$}
\put(2.4,2.4){\vector(-1,-1){1.5}}
\put(0.5,0.5){$\Lambda_{0}\simeq\Sph^{2}$}
\put(0.5,0){$\|$}
\put(-0.7,-0.5){$\Sigma\slash\{\omega^{1}=0,\omega^{3}=0\}$}
\put(1.1,1.7){$\lambda$}
\put(4.1,1.7){$\pi$}
\end{picture}
\bigskip
\end{center}

\begin{proposition}
Let us consider $(\Lambda_0=\Sph^2,g)$, where $g$ is a Zoll metric with sectional curvature ${\mathcal K>0}$, and let 
$( \alpha^1, \alpha^2\, \alpha^3)$ be the Liouville-Cartan structure induced by it. Then the coframing
\begin{equation}\label{Zoll induced GFS}
\begin{split}
\omega^1&=\sqrt{\mathcal K}\alpha^1\\
\omega^2&=-\alpha^3\\
\omega^3&=\sqrt{\mathcal K}\alpha^2
\end{split}
\end{equation}
is an $(I,J,1)$-generalized Finsler structure on $\Sigma$. Moreover, this structure projects to a classical Finsler structure on $M:=\Sph^2$.
\end{proposition}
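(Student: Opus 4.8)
The plan is to obtain the first assertion as a special case of Proposition \ref{construction prop} and the second from the realizability criterion of Theorem \ref{thm: GFS equivalence to Finsler}.

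For the first assertion I would run the construction of Proposition \ref{construction prop} on $\Lambda_0=\Sph^2$ with the trivial group $\mathcal F=\{\id\}$ and with the zero $1$-form $\phi\equiv 0$, i.e. $f=g=0$. Then the inequality in Condition C \ref{C3} collapses to $\mathcal K>0$, which is exactly the hypothesis on the Zoll metric, and the $\mathcal F$-invariance is vacuous; formula \eqref{v general} reduces to $v=1/\sqrt{\mathcal K}$, so the general coframe \eqref{induced GFS} becomes precisely \eqref{Zoll induced GFS}. Proposition \ref{construction prop} then immediately gives that \eqref{Zoll induced GFS} is an $(I,J,1)$-generalized Finsler structure, whose invariants are read off from \eqref{IJ generali} as $I=-v_{\eta 2}$, $J=v_{\eta 1}$ with $v=1/\sqrt{\mathcal K}$; these vanish for the round metric but are non-constant as soon as $\mathcal K$ is. (Alternatively the equations \eqref{eq: finslerK=1_struct_eq} can be verified by hand from the $\mathcal K$-Cartan equations \eqref{K_Cartan_structure} and $d\mathcal K=\mathcal K_1\alpha^1+\mathcal K_2\alpha^2$.)

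For the second assertion I would verify the three conditions of Theorem \ref{thm: GFS equivalence to Finsler} for the indicatrix foliation $\{\omega^1=0,\ \omega^2=0\}$. Since $\omega^1=\sqrt{\mathcal K}\,\alpha^1$ and $\omega^2=-\alpha^3$, this foliation coincides with $\{\alpha^1=0,\ \alpha^3=0\}$, hence is tangent to the field $\hat e_2$ dual to the second Liouville--Cartan form $\alpha^2$. As $(\alpha^1,\alpha^2,\alpha^3)$ is the Liouville--Cartan structure of $(\Sph^2,g)$ on its unit tangent bundle (with $\alpha^1$ the canonical form whose Reeb field is the geodesic spray and $\alpha^3$ the Levi-Civita connection form), the orbit of $\hat e_2$ through $(x,v)$ projects to the $g$-geodesic leaving $x$ in the direction $v^\perp$ perpendicular to $v$, and carries $v$ as a parallel unit normal along it. To obtain condition (1) I would invoke the Zoll hypothesis: each such geodesic is a simple closed curve and bounds a disk $D$ on which Gauss--Bonnet gives $\int_D\mathcal K\,dA=2\pi$, so the parallel transport once around it is a rotation by $2\pi$, i.e. the identity; hence $v$ returns to itself after one loop and the leaf closes into a circle, so all leaves of the foliation are compact. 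For condition (2) the assignment sending $(x,v)$ to the oriented geodesic with velocity $v^\perp$ identifies the leaf space $M$ with the space of oriented geodesics of the Zoll sphere, which is a smooth surface diffeomorphic to $\Sph^2$ and over which $\pi:\Sigma\to M$ is a smooth circle bundle.

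The remaining condition (3) is the one I expect to be the main obstacle, being exactly the strict convexity that promotes a generalized structure to a genuine Finsler metric and the only place where the positivity $\mathcal K>0$ enters essentially. Here the field $\hat e_2^{\,\omega}$ dual to $\omega^2$ equals $-\hat e_3$, the vertical field of the unit tangent bundle, so $\iota$ carries a point $\gamma(s)$ of a fibre to the normal Jacobi field along the closed geodesic $\gamma$ that vanishes at $\gamma(s)$; I would then have to show that as $s$ runs once around $\gamma$ the resulting loop in $T_mM$ is embedded, strictly convex and encircles the origin, so that $\iota$ is injective on each fibre. The natural route is the Jacobi equation $Y''+\mathcal K\,Y=0$ along $\gamma$, where $\mathcal K>0$ forces the associated Pr\"ufer angle to increase monotonically and hence the indicatrix to be convex; this is the analytic core of Bryant's realizability argument in \cite{Br1996} and \cite{Br2002}. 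Granting (1)--(3), Theorem \ref{thm: GFS equivalence to Finsler} then yields the classical Finsler structure on $M\cong\Sph^2$, realized by the two projections $\lambda$ and $\pi$ of the double fibration above; a minor bookkeeping point is that $ST\Sph^2\cong\mathbb{RP}^3$ while $\Sigma\simeq\Sph^3$, so one carries out the argument on the double cover, where the leaves remain circles.
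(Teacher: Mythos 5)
Your proposal follows the same skeleton as the paper's proof. For the first assertion you do exactly what the paper does: run Proposition \ref{construction prop} with $\phi=0$, so that \eqref{v general} collapses to $v=1/\sqrt{\mathcal K}$ (meaningful precisely because $\mathcal K>0$) and \eqref{induced GFS} becomes \eqref{Zoll induced GFS}; your invariants $I=-v_{\eta 2}$, $J=v_{\eta 1}$ agree with the paper's $I=\frac{1}{2\mathcal K\sqrt{\mathcal K}}\mathcal K_{\eta 2}$, $J=-\frac{1}{2\mathcal K\sqrt{\mathcal K}}\mathcal K_{\eta 1}$. Where you diverge is the second assertion: the paper disposes of it in one sentence --- the indicatrix foliation $\{\omega^1=0,\ \omega^2=0\}$ coincides with the geodesic foliation $\{\alpha^1=0,\ \alpha^3=0\}$, and for a Zoll metric the manifold of geodesics is a smooth $\Sph^2$, ``therefore the conditions of Theorem \ref{thm: GFS equivalence to Finsler} are satisfied'' --- whereas you verify conditions (1)--(3) explicitly, which is a genuine (and welcome) expansion. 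Two remarks on that expansion. First, your Gauss--Bonnet holonomy argument for compactness of the leaves silently assumes that every Zoll geodesic is a \emph{simple} closed curve bounding a disk; this is true (it is the Gromoll--Grove theorem for metrics on $\Sph^2$ all of whose geodesics are closed), but it is an uncited nontrivial fact, and it is also unnecessary: along a closed geodesic of an oriented surface the unit normal (the $90^{\circ}$ rotation of $\dot\gamma$) is itself parallel, hence returns to itself when the geodesic closes; equivalently, fiber rotation by $\pi/2$ is a diffeomorphism of $ST\Lambda_0$ conjugating the geodesic foliation to the indicatrix foliation, so leaves are compact the moment all geodesics are closed, with no convexity or simplicity input. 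Second, your condition (3) --- injectivity and convexity of the fiberwise immersion via the Jacobi equation $Y''+\mathcal K\,Y=0$ --- remains a sketch deferring to Bryant (\cite{Br1996}, \cite{Br2002}); this is the one analytic step that neither you nor the paper actually carries out, so your write-up is, if anything, more complete and more honest about where the real work lies than the paper's own proof.
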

\begin{proof}
Recall that a Zoll metric on $\Sph^2$ is a Riemannian metric all of whose geodesics are closed and have same length (see for example \cite{Be} for details). Equivalently, in this case the manifold of geodesics $M:=\Sph^3\slash\{\alpha^1=0,\alpha^3=0\}=\Sph^2$
is a smooth manifold. If we denote as before the induced Liouville-Cartan structure by 
$(\alpha^1,\alpha^2,\alpha^3)$, then applying the construction in Proposition \ref{construction prop} with $\varphi=0$, it follows 
\begin{equation}
\mathcal K\alpha^1\wedge\alpha^2=\frac{1}{v^2}\alpha^1\wedge\alpha^2,
\end{equation}
i.e. $v=\dfrac{1}{\sqrt{\mathcal K}}$, and this makes sense since for a Zoll metric $\mathcal K>0$. It follows that the coframing \eqref{Zoll induced GFS} is indeed an $(I,J,1)$-generalized Finsler structure on $\Sph^3$ and therefore on $\Sph^3\slash\Gamma$, where $\Gamma$ is the admissible subgroup of $\Sph^3$ corresponding to the cyclic isometry group $\Gamma'=\mathcal C_2$ of $\Sph^2$, which acts freely on 
$\Sph^2$.

The invariants $I$ and $J$ are obtained immediately from 
\begin{equation*}
I\eta^1+J\eta^2=\frac{1}{\sqrt{\mathcal K}}*d \Bigl(\frac{1}{\sqrt{\mathcal K}}\Bigr),
\end{equation*}
namely
\begin{equation*}
I=\frac{1}{2\mathcal K\sqrt{\mathcal K}}\mathcal K_{\eta 2},\qquad J=-\frac{1}{2\mathcal K\sqrt{\mathcal K}}\mathcal K_{\eta 1}.
\end{equation*}

Moreover, remark that the geodesic foliation $\{\alpha^1=0,\alpha^3=0\}$ of $(\Lambda_{0}=\Sph^2,g)$ coincides with the indicatrix foliation $\{\omega^1=0,\omega^2=0\}$ of the $(I,J,1)$-generalized Finsler structure and therefore the conditions of Theorem \ref{thm: GFS equivalence to Finsler} are satisfied. 

$\qedd$
\end{proof}

\section{Concrete constructions}\label{sec:4}

In this section we will construct explicit nontrivial $(I,J,1)$-generalized Finsler structures on each of the quotient manifolds in Theorem \ref{thm: GFS classif}.
\subsection{The case of $SU(2)$}\label{sec:6.1}
Following \cite{GG2002} we identify $SU(2)$ with the unit quaternions $\Sph^3\subset \mathbb H$ by the group isomorphism
\begin{equation*}
g=\begin{pmatrix}
a_0+ia_1 & b_0+ib_1\\
-b_0+ib_1 & a_0-ia_1
\end{pmatrix}\mapsto
a_0+ia_1+jb_0+kb_1,
\end{equation*}
where $i^2=j^2=k^2=ijk=-1$, and $\det g=a_0^2+a_1^2+b_0^2+b_1^2=1$. We consider $a_1$, $b_0$, $b_1$ as local coordinates and always substitute 
\begin{equation}\label{a_0}
a_0=\sqrt{1-a_1^2-b_0^2-b_1^2}.
\end{equation}
This coordinate chart covers only the domain in the hemisphere with $a_0>0$, but this is typical for any non-trivial manifold, i.e. a manifold that is not diffeomorphic to $\R^{n}$. A complete atlas of local charts can be constructed if needed.

We recall that the group operation is given by the Hamiltonian product
\begin{equation}\label{Hamiltonian product}
\begin{split}
g\cdot \bar g & = (a_0+ia_1+jb_0+kb_1)\cdot (\bar a_0+i\bar a_1+j\bar b_0+k\bar b_1)\\
& = (a_0\bar a_0-a_1\bar a_1-b_0\bar b_0-b_1\bar b_1)
+i(a_0\bar a_1+a_1\bar a_0+b_0\bar b_1-b_1\bar b_0)\\
& +j(a_0\bar b_0-a_1\bar b_1+b_0\bar a_0+b_1\bar a_1)
+k(a_0\bar b_1+a_1\bar b_0-b_0\bar a_1+b_1\bar a_0),
\end{split}
\end{equation}
the neutral element is
\begin{equation}
e=\begin{pmatrix}
1 &0\\
0 & 1
\end{pmatrix}\mapsto
1+i0+j0+k0=1
\end{equation}
and inverse element of $g$ is given by
\begin{equation}
g^{-1}=a_0-ia_1-jb_0-kb_1.
\end{equation}

We are going to compute the Maurer-Cartan forms of $SU(2)$ by the usual formula
$\alpha_R=dg\cdot g^{-1}\in \mathfrak{su}(2)$. We have
\begin{equation}
\begin{split}
dg \cdot g^{-1}
& =(da_0+ida_1+jdb_0+kdb_1)\cdot (a_0-ia_1-jb_0-kb_1)\\
&= (a_0da_0+a_1da_1+b_0db_0+b_1db_1)+i(-a_1da_0+a_0da_1-b_1db_0+b_0db_1)\\
&+ j(-b_0da_0+b_1da_1+a_0db_0-a_1db_1)+k(-b_1da_0-b_0da_1+a_1db_0+a_0db_1).
\end{split}
\end{equation}

Taking into account the equation $\det g=1$ we have obtained three right invariant 1-forms 
$(\alpha^1,\alpha^2,\alpha^3)$ on $SU(2)$ considered as subspace of $\mathbb H$, where
\begin{equation}\label{Maurer-Cartan SU(2)}
\begin{split}
\alpha^1=& -b_0da_0+b_1da_1+a_0db_0-a_1db_1\\
\alpha^2=& -b_1da_0-b_0da_1+a_1db_0+a_0db_1\\
\alpha^3=& -a_1da_0+a_0da_1-b_1db_0+b_0db_1. 
\end{split}
\end{equation}
A straightforward computation shows that $(\alpha^1,\alpha^2,\alpha^3)$ satisfy the structure equations \eqref{str_eq_for_stand_Cartan} with $\varepsilon=1$, where we always consider $a_0$ as in \eqref{a_0}.

 To be precise, if we denote $\iota:\Sph^3\to \mathbb H$ the canonical inclusion given by \eqref{a_0}, the standard Cartan structure is $(\iota^*\alpha^1,\iota^*\alpha^2,\iota^*\alpha^3)$, but we will make the difference between $\alpha^i$ and $\iota^*\alpha^i$, $i=1,2,3$ only when necessarily. 

\begin{remark}
We recall that quaternions are in fact pairs of complex numbers obtained by applying the Cayley-Dickson construction to the complex numbers. Indeed, one can represent a vector in $\bbC^2$ as
\begin{equation}
(a_0+ia_1)1+(b_0+ib_1)j=(a_0+ia_1,b_0+ib_1)=(z_1,z_2),
\end{equation}
where $z_1=a_0+ia_1$, $z_2=b_0+ib_1$.

Keeping this identification in mind, we point out that the standard Cartan structure of $SU(2)$ defined in \cite{GG1995} is
\begin{equation}
\begin{split}
\alpha^1+i\alpha^2& =2\iota^*(z_1dz_2-z_2dz_1)\\
&= 2\iota^*(-b_0da_0+b_1da_1+a_0db_0-a_1db_1)+
2i\iota^*(-b_1da_0-b_0da_1+a_1db_0+a_0db_1)
\end{split}
\end{equation}
fact that shows that our Maurer-Cartan forms defined in formulas \eqref{Maurer-Cartan SU(2)} coincide with the standard Cartan forms used in \cite{GG1995}. This is the reason we prefer to work with right invariant 1-forms.
\end{remark}

From the construction of $(I,J,1)$-generalized Finsler structures induced by Cartan structures, we consider the generalized Finsler structure induced by the standard Cartan structure 
$(\alpha^1,\alpha^2,\alpha^3)$ of $SU(2)$ given in \eqref{coframe in I, J}.

 We have seen that for $\varphi=I\alpha^1+J\alpha^2$ it follows from \eqref{phi_struct_eq} 
 that this is a closed 1-form on $SU(2)$ and hence an exact one since $H_1(\Sph^3)=0$. In other words, there exists a function $f:\Sph^3\to \R$ such that $\varphi=df$. If we denote as usual $df=f_{\alpha 1}\alpha^1+f_{\alpha 2}\alpha^2+f_{\alpha 3}\alpha^3$, then we obtain
 \begin{equation}\label{IJformulas}
 I=f_{\alpha 1},\quad J=f_{\alpha 2}, \quad f_{\alpha 3}=0.
 \end{equation}
This means that if we find a function $f(a_1,b_0,b_1)$ on $\Sph^3$ solution of the directional differential equation $f_{\alpha 3}=0$, then directional derivatives of $f$ with respect to $\alpha^1$ and $\alpha^2$ will give $I$ and $J$, respectively. Indeed, one can easily see that taking into account the Ricci identities for $f$ with respect to the $(+1)$ standard Cartan structure 
$(\alpha^1,\alpha^2,\alpha^3)$, namely
\begin{equation}
\begin{split}
& f_{\alpha 21}-f_{\alpha 12}=-f_{\alpha 3}\\
& f_{\alpha 32}-f_{\alpha 23}=-f_{\alpha 1}\\
& f_{\alpha 31}-f_{\alpha 13}=f_{\alpha 2}
\end{split}
\end{equation}
and simply using $f_{\alpha 3}=0$ the conditions \eqref{PDE wrt Cartan struct} are satisfied by $I$ and $J$ in \eqref{IJformulas}.

From \eqref{Maurer-Cartan SU(2)} we have
\begin{equation}
\begin{split}
f_{\alpha 1}= &\dfrac{1}{2}(f_{a1}b_1-f_{b1}a_1+f_{b0}a_0)\\
f_{\alpha 2}= &\dfrac{1}{2}(f_{b0}a_1-f_{a1}b_0+f_{b1}a_0)\\
f_{\alpha 3}= &\dfrac{1}{2}(f_{a1}a_0-f_{b0}b_1+f_{b1}b_0),
\end{split}
\end{equation}
where the subscripts $a1$ etc. of $f$ mean partial derivatives of $f$ with respect to the respective coordinate, and, as usual, we use \eqref{a_0}.

The equation $f_{\alpha 3}=0$ is equivalent to the PDE
\begin{equation}
\dfrac{\partial f}{\partial a_1}\sqrt{1-a_1^2-b_0^2-b_1^2}- 
\dfrac{\partial f}{\partial b_0}b_1
+\dfrac{\partial f}{\partial b_1}b_0=0.
\end{equation}

The general solution of this equation is
\begin{equation}
f(a_1,b_0,b_1)=\Phi\Bigl(b_0^2+b_1^2,\arctan \dfrac{a_0b_0+a_1b_1}{a_0b_1-a_1b_0}\Bigr),
\end{equation}
where $\Phi:\R^{+}\times\R\to \R$ is an arbitrary function of two variables and again we use \eqref{a_0}.

We are next interested in constructing an $(I,J,1)$-generalized Finsler structure on the quotient $SU(2)\slash \Gamma$, where $\Gamma$ is a discrete, therefore finite, subgroup of $SU(2)$. We have
\begin{lemma}\label{lem:4.6}
Let $u,v:SU(2)\to \R$ be any smooth functions, and let $\Gamma$ be a finite subgroup of 
$SU(2)$. If $u$ and $v$ are $\Gamma$-invariant, then
\begin{enumerate}
\item $f(g):=\Phi(u(g),v(g))$ is $\Gamma$-invariant, for any $g\in SU(2)$;
\item  $I:=f_{\alpha 1},\ J:=f_{\alpha 2}$ are also $\Gamma$-invariant,
\end{enumerate}
where the subscripts are the directional derivatives with respect to the coframe \eqref{Maurer-Cartan SU(2)}. 
\end{lemma}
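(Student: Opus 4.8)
The plan is to prove both parts of Lemma~\ref{lem:4.6} by tracking how the relevant objects transform under left (or right) multiplication by an element $\gamma\in\Gamma$, reducing everything to the already-established fact that the Maurer--Cartan coframe \eqref{Maurer-Cartan SU(2)} is right-invariant. First I would fix $\gamma\in\Gamma$ and denote by $R_\gamma:SU(2)\to SU(2)$ (resp.\ $L_\gamma$) the multiplication map, so that $\Gamma$-invariance of a function $u$ means $u\circ R_\gamma=u$ for all $\gamma$. Since $\Gamma$ acts by the same side of multiplication that leaves the chosen forms $\alpha^i$ invariant, the key structural input is that $R_\gamma^*\alpha^i=\alpha^i$ for $i=1,2,3$; this was verified when we checked that $(\alpha^1,\alpha^2,\alpha^3)$ are the right-invariant Maurer--Cartan forms satisfying \eqref{str_eq_for_stand_Cartan}.

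For part~(1), the argument is essentially formal. Given that $u$ and $v$ are $\Gamma$-invariant, I would compute
\begin{equation*}
f\circ R_\gamma = \Phi\bigl(u\circ R_\gamma,\, v\circ R_\gamma\bigr)=\Phi(u,v)=f,
\end{equation*}
so that $f=\Phi(u,v)$ is $\Gamma$-invariant as well. This uses nothing beyond the definition of $\Gamma$-invariance and the fact that $\Phi$ is applied pointwise; the finiteness of $\Gamma$ plays no role here and is only relevant because discrete subgroups of $SU(2)$ are automatically finite.

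For part~(2), the point is that taking a directional derivative along a right-invariant frame commutes with pulling back by $R_\gamma$. The clean way to phrase this: if $\hat e_1,\hat e_2,\hat e_3$ denote the frame dual to $(\alpha^1,\alpha^2,\alpha^3)$, then right-invariance of the $\alpha^i$ is equivalent to right-invariance of the $\hat e_i$, i.e.\ $(R_\gamma)_*\hat e_i=\hat e_i$. Hence for any function $h$ one has $(\hat e_i h)\circ R_\gamma=\hat e_i\,(h\circ R_\gamma)$. Applying this with $h=f$ and using part~(1) gives
\begin{equation*}
I\circ R_\gamma=(f_{\alpha 1})\circ R_\gamma=\hat e_1(f\circ R_\gamma)=\hat e_1 f=I,
\end{equation*}
and likewise $J\circ R_\gamma=J$, which is exactly the claimed $\Gamma$-invariance of $I=f_{\alpha 1}$ and $J=f_{\alpha 2}$. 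I expect the only genuine subtlety, and the step I would state most carefully, to be the bookkeeping of which side $\Gamma$ acts on versus which side the chosen Maurer--Cartan forms are invariant under: because we have deliberately chosen to work with right-invariant forms (as emphasized in the remark following \eqref{Maurer-Cartan SU(2)}), the invariance of the coframe must be matched to the action of $\Gamma$ by the corresponding multiplication so that the commutation $(\hat e_i h)\circ R_\gamma=\hat e_i(h\circ R_\gamma)$ is valid. Once this compatibility is pinned down, both conclusions follow immediately, and the differentiability of $I$ and $J$ is automatic since $f$ is smooth and $\hat e_i$ are smooth vector fields.
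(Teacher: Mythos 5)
Your proof is correct and follows essentially the same route as the paper: part (1) is the identical formal computation, and part (2) rests, exactly as in the paper, on combining the $\Gamma$-invariance of $f$ (from part (1)) with the right-invariance of the Maurer--Cartan coframe, with the action of $\Gamma$ matched to the side on which the forms are invariant. The only difference is cosmetic: you phrase the second step dually, via invariance of the dual frame fields $\hat e_i$ and the commutation $(\hat e_i h)\circ R_\gamma=\hat e_i(h\circ R_\gamma)$, whereas the paper pulls back the 1-form $df$ and reads off invariance of its coefficients in the invariant coframe; if anything your version is slightly more careful, since it does not implicitly rely on the identity $df=I\alpha^1+J\alpha^2$ (i.e.\ on $f_{\alpha 3}=0$), which holds for the specific $u,v$ used later but is not part of the lemma's hypotheses.
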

\begin{proof}
If  $u$ and $v$ are $\Gamma$-invariant, i.e. $u(g)=u(g\cdot x)$ and $v(g)=v(g\cdot x)$ for any $g\in SU(2)$ and $x\in \Gamma$, then obviously 
\begin{equation*}
f(g\cdot x)=\Phi(u(g\cdot x),v(g\cdot x))=\Phi(u(g),v(g))=f(g)
\end{equation*}
 and (1) is proved.

Under the hypothesis conditions, since $f$ is $\Gamma$-invariant from (1), it follows that 
$\varphi=df$ is also   $\Gamma$-invariant, i.e. $I\alpha^1+J\alpha^2$ is $\Gamma$-invariant and taking into account that $(\alpha^1$, $\alpha^2)$ are Maurer-Cartan forms on $SU(2)$, the conclusion (2) follows.

$\qedd$
\end{proof}
In other words, in oder to get a $\Gamma$-invariant $(I,J,1)$-generalized Finsler structure, it is enough to verify if $u$ and $v$ are $\Gamma$-invariant.

In our case $u(g)=b_0^2+b_1^2$ for any $g=a_0+ia_1+jb_0+kb_1$, and from \eqref{Hamiltonian product} it follows
\begin{equation*}
\begin{split}
u(g\cdot x)& =(a_0^2+a_1^2)(y_0^2+y_1^2)+(b_0^2+b_1^2)(x_0^2+x_1^2)\\
&+2(a_0b_0+a_1b_1)(x_0y_0-x_1y_1)+2(a_0b_1-a_1b_0)(x_0y_1+x_1y_0),
\end{split}
\end{equation*}
where $x=x_0+ix_1+jy_0+ky_1\in \Gamma$.
Putting now the condition $u(g\cdot x)=u(g)$ one can easily see that the only admissible element 
$x\in\Gamma$ must have the form $x=x_0+ix_1$, with $x_0^2+x_1^2=1$. 

Similarly, since $v(g)=\arctan\dfrac{a_0b_0+a_1b_1}{a_0b_1-a_1b_0}$, we are going to verify the $\Gamma$-invariance of $w(g):=\dfrac{a_0b_0+a_1b_1}{a_0b_1-a_1b_0}=\dfrac{\mathcal A(g)}{\mathcal B(g)}$. A straightforward computation gives 
$w(g\cdot x)=\dfrac{\mathcal A(g\cdot x)}{\mathcal B(g\cdot x)}$, where
\begin{equation}
\begin{split}
\mathcal A(g\cdot x)& =(a_0b_0+a_1b_1)(x_0^2-x_1^2-y_0^2+y_1^2)+(a_0^2+a_1^2-b_0^2-b_1^2)(x_0y_0+x_1y_1)\\
& +2(a_0b_1-a_1b_0)(x_0x_1-y_0y_1)\\
\mathcal B(g\cdot x)& =(a_0b_1-a_1b_0)(x_0^2-x_1^2+y_0^2-y_1^2)+(a_0^2+a_1^2-b_0^2-b_1^2)(-x_1y_0+x_0y_1)\\
& -2(a_0b_0+a_1b_1)(x_0x_1+y_0y_1).
\end{split}
\end{equation}
Putting the condition $\dfrac{\mathcal A(g\cdot x)}{\mathcal B(g\cdot x)}=
\dfrac{\mathcal A(g)}{\mathcal B(g)}$ it follows
\begin{eqnarray}
& & x_0y_0+x_1y_1=0\label{1}\\
& & x_0x_1-y_0y_1=0\label{2}\\
& & x_1y_0-x_0y_1=0\label{3}\\
& & x_0x_1+y_0y_1=0\label{4}
\end{eqnarray}

From \eqref{2} and \eqref{4} we get $x_0x_1=0$ and $y_0y_1=0$. If we assume, for example, $x_0=0$, then from \eqref{1} and \eqref{3} we get $x_1y_1=0$ and $x_1y_0=0$. If, moreover we take $x_1=0$ then we get $y_0y_1=0$ and from here $y_1=0$ and thus $x_1y_0=0$ or $y_0=0$ and thus  
$x_1y_1=0$. In conclusion, in the case $x_0=0$, two coordinates among $x_1,y_1,y_0$ must vanish. A  similar analysis can be done taking in turn $x_1=0$, $y_0=0$ and $y_1=0$, respectively. We conclude that in order to assure the $\Gamma$-invariance of $w$ it is necessary that three among the coordinates $x_0,x_1,y_1,y_0$ must vanish.

Putting all these together we can formulate
\begin{lemma}\label{lem:4.7}
With the notation above we have
\begin{enumerate}
\item $u(g\cdot x)=u(x)$, for $x=x_0+ix_1\in SU(2)$ and any $g\in SU(2)$,
\item $v(g\cdot x)=v(x)$, for $x\in \{x_0, ix_1, jy_0, ky_1\}\subset SU(2)$ and any $g\in SU(2)$.
\end{enumerate}
\end{lemma}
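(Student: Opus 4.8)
The plan is to prove both statements by direct substitution into the Hamiltonian product \eqref{Hamiltonian product}, reusing the expressions for $u(g\cdot x)$, $\mathcal A(g\cdot x)$ and $\mathcal B(g\cdot x)$ already assembled in the discussion preceding the lemma, together with the unit-norm constraint $x_0^2+x_1^2+y_0^2+y_1^2=1$ coming from $x\in SU(2)$.

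For statement (1) I would specialize the expression derived above for $u(g\cdot x)$ to $x=x_0+ix_1$, i.e. $y_0=y_1=0$. The term $(a_0^2+a_1^2)(y_0^2+y_1^2)$ then vanishes and the two cross terms carrying the factors $x_0y_0-x_1y_1$ and $x_0y_1+x_1y_0$ drop out, leaving $u(g\cdot x)=(b_0^2+b_1^2)(x_0^2+x_1^2)$. Since $y_0=y_1=0$ forces $x_0^2+x_1^2=1$, this equals $b_0^2+b_1^2=u(g)$. This part is a one-line consequence once the general formula is in hand.

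For statement (2) I would treat the four axis elements $x\in\{x_0,\,ix_1,\,jy_0,\,ky_1\}$ one at a time, the surviving coordinate being $\pm1$ by the norm condition. For each choice one reads the coordinates of $g\cdot x$ from \eqref{Hamiltonian product}---each such $x$ merely permutes and sign-changes the quadruple $(a_0,a_1,b_0,b_1)$---and substitutes into $\mathcal A$ and $\mathcal B$ to recompute the ratio $w=\mathcal A/\mathcal B$, and hence $v=\arctan w$. The necessity analysis preceding the lemma has already narrowed the candidates to precisely these axis elements, so here it remains to carry out the substitution explicitly for each one.

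The main obstacle is the sign bookkeeping in (2). Each axis element acts on the pair $(\mathcal A,\mathcal B)$ in a genuinely different way, and one must track carefully whether the induced change is a common sign on both $\mathcal A$ and $\mathcal B$---which cancels in the quotient and leaves $w$, and therefore $v$, fixed---or a sign on $\mathcal A$ alone, which flips $w$ and must be reconciled through the oddness of $\arctan$. Pinning down exactly how $w$ transforms under each axis element is the delicate core of the argument; statement (1), by contrast, is immediate.
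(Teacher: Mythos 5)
Your overall strategy---direct substitution into the Hamiltonian product \eqref{Hamiltonian product}, reusing the displayed expansions of $u(g\cdot x)$, $\mathcal A(g\cdot x)$ and $\mathcal B(g\cdot x)$---is the same route the paper takes: there the lemma carries no separate proof and is presented as a summary of the computation immediately preceding it (which is really a \emph{necessity} analysis, narrowing the candidates to the axis elements), so the \emph{sufficiency} check you propose is exactly what needs to be supplied. Your treatment of statement (1) is complete and correct.

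The gap is in statement (2), at precisely the point you flag as the ``delicate core.'' Carrying out the substitution shows that both sign patterns occur: for $x\in\{\pm 1,\pm i\}$ the quantities $\mathcal A$ and $\mathcal B$ change by a \emph{common} sign ($+$ for $\pm 1$, $-$ for $\pm i$), so $w=\mathcal A\slash\mathcal B$ and hence $v=\arctan w$ are genuinely invariant; but for $x=\pm j$ one finds $\mathcal A(g\cdot x)=-\mathcal A(g)$ while $\mathcal B(g\cdot x)=+\mathcal B(g)$, and for $x=\pm k$ one finds $\mathcal A(g\cdot x)=+\mathcal A(g)$ while $\mathcal B(g\cdot x)=-\mathcal B(g)$. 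In both of these cases $w(g\cdot x)=-w(g)$, and the oddness of $\arctan$ then yields $v(g\cdot x)=-v(g)$, not $v(g\cdot x)=v(g)$: oddness \emph{propagates} the sign flip, it cannot reconcile it. So the equality claimed in the lemma (reading $v(x)$ as the evident typo for $v(g)$) fails for the axis elements $\pm j,\pm k$; what your computation will actually deliver is $v(g\cdot x)=\pm v(g)$, with $+$ for $x\in\{\pm 1,\pm i\}$ and $-$ for $x\in\{\pm j,\pm k\}$. This up-to-sign statement is in fact all the paper uses downstream: in the proposition on $SU(2)\slash\mathcal D_8^*$ it records $v(g\cdot x)=\pm v(g)$ for $x\in\mathcal D_8^*$ and restores genuine invariance of $f=\Phi(v)$ by requiring $\Phi$ to be an \emph{even} function. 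Your write-up should therefore either prove the up-to-sign version (and note the evenness hypothesis needed later), or restrict the exact-invariance claim to $x\in\{\pm 1,\pm i\}$; as currently planned, the proof of (2) cannot close.
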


We are going to see if there are some particular finite subgroups $\Gamma$ of $SU(2)$ such that $f$ is $\Gamma$-invariant. We recall that if $\Gamma$ is a finite subgroup of $SU(2)$, then it must be one of the following: the cyclic group $\mathcal C_m$ of order $m$, the binary dihedral group $\mathcal D_{4n}^*$ of order $4n$, the binary tetrahedral group $\mathcal T^*$ of order $24$, the binary octahedral group $\mathcal O^*$ of order $24$, the binary icosahedral group $\mathcal I^*$ of order $120$ (see for eg. \cite{W} p. 87--88).

\begin{proposition}
Let $(\alpha^1,\alpha^2,\alpha^3)$ be the Maurer-Cartan forms of $SU(2)$. Then, the coframing 
\begin{equation}\label{coframe with df}
\begin{split}
& \omega^1=\alpha^1\\
& \omega^2=df-\alpha^3\\
& \omega^3=\alpha^2
\end{split}
\end{equation}
is an $(I,J,1)$-generalized Finsler structure, with invariants $I=f_{\alpha 1}$, $J=f_{\alpha 2}$, that descends on $SU(2)\slash\mathcal C_m$, where 
$f:SU(2)\to \R$ is a smooth function such that
\begin{itemize}
\item $f(a_1,b_0,b_1):=
\Phi(b_0^2+b_1^2, 
\arctan \frac{a_0b_0+a_1b_1}{a_0b_1-a_1b_0})$, for $m=2$, and
\item $f(a_1,b_0,b_1):=\Phi(b_0^2+b_1^2)$, for $m>2$,
\end{itemize}
where $\Phi$ is an arbitrary function of two or one variables, respectively. 
\end{proposition}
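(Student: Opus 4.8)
The plan is to establish the statement in two stages: first that \eqref{coframe with df} is an $(I,J,1)$-generalized Finsler structure on the whole of $SU(2)$, and then that it is invariant enough to descend to $SU(2)/\mathcal{C}_m$ for the indicated $f$. For the first stage I would simply match \eqref{coframe with df} against the general induced coframe \eqref{coframe in I, J}. The standard Cartan structure $(\alpha^1,\alpha^2,\alpha^3)$ satisfies \eqref{str_eq_for_stand_Cartan} with $\varepsilon=1$, so it is a $\mathcal K$-Cartan structure with $\mathcal K\equiv 1$; the two coframes agree exactly when $df=I\alpha^1+J\alpha^2$, that is when $f_{\alpha 3}=0$ together with $I=f_{\alpha 1}$ and $J=f_{\alpha 2}$. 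Both prescribed choices of $f$ are instances of the general solution of $f_{\alpha 3}=0$ found earlier (the $m>2$ choice being the special case in which $\Phi$ ignores its second argument), so this holds. By Proposition \ref{prop4.3} it then remains to verify that $I$ and $J$ solve \eqref{PDE wrt Cartan struct} with $\mathcal K=1$, and this is exactly the computation carried out just before the statement: substituting $I=f_{\alpha 1}$, $J=f_{\alpha 2}$ and $f_{\alpha 3}=0$ into the Ricci identities of the $(+1)$-Cartan structure produces the three directional equations at once.

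For the descent I would first observe that, the $\alpha^i$ being Maurer-Cartan forms, they are right-invariant and hence automatically pass to the quotient by the finite group $\mathcal{C}_m$; consequently the only form demanding attention is $\omega^2=df-\alpha^3$, whose descent is equivalent to the $\mathcal{C}_m$-invariance of $f$. Setting $u=b_0^2+b_1^2$ and $v=\arctan\frac{a_0b_0+a_1b_1}{a_0b_1-a_1b_0}$, Lemma \ref{lem:4.6} reduces the invariance of $f=\Phi(u,v)$ (and of $I$, $J$) to that of $u$ and $v$, while Lemma \ref{lem:4.7} records precisely which elements preserve each. The key input is to realize $\mathcal{C}_m$ concretely as the subgroup generated by $e^{2\pi i/m}=\cos(2\pi/m)+i\sin(2\pi/m)$, which lies on the circle $\{x_0+ix_1\}$; by Lemma \ref{lem:4.7}(1) this makes $u$ invariant under $\mathcal{C}_m$ for every $m$.

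The two cases then differ only through $v$. For $m=2$ the group is $\{\pm1\}$, and its nontrivial element $-1$ is purely real, hence of the axis type covered by Lemma \ref{lem:4.7}(2), so $v$ is also $\mathcal{C}_2$-invariant and one may keep the full $f=\Phi(u,v)$. For $m>2$ the generator $e^{2\pi i/m}$ has, in general, both $x_0\neq 0$ and $x_1\neq 0$, so it is not of axis type and $v$ is no longer invariant; I would therefore drop the dependence on $v$ and take $f=\Phi(u)$, which is $\mathcal{C}_m$-invariant for all $m$ (the borderline values such as $m=4$, where the generator happens to be axis type, are harmlessly absorbed, since $\Phi(u)$ is a legitimate choice there too). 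In either case $f$, and therefore $df$ and $\omega^2$, descends, yielding the asserted $(I,J,1)$-generalized Finsler structure on $SU(2)/\mathcal{C}_m$. The one genuinely delicate step is this case analysis on the generator of $\mathcal{C}_m$: everything hinges on whether $e^{2\pi i/m}$ is of axis type, which is exactly the dichotomy that Lemma \ref{lem:4.7} is tailored to resolve.
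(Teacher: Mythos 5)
Your proof is correct and takes essentially the same route as the paper: the structure equations are verified exactly as in the discussion preceding the proposition (general solution of $f_{\alpha 3}=0$ plus the Ricci identities of the $(+1)$-Cartan structure), and the descent to $SU(2)\slash\mathcal C_m$ is reduced via Lemmas \ref{lem:4.6} and \ref{lem:4.7} to the invariance of $u$ and $v$ under the generator $\cos\frac{2\pi}{m}+i\sin\frac{2\pi}{m}$. Your handling of the borderline case $m=4$ is in fact slightly more careful than the paper's own proof, which asserts that only $u$ is $\mathcal C_m$-invariant for $m>2$ (not true for $m=4$, where the generator $i$ is of axis type, though this is harmless since $\Phi(u)$ remains a legitimate choice there).
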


\begin{proof}
Since $\mathcal C_m$ is the cyclic group of order $m$, it is generated by $x=\cos\frac{2\pi}{m}+i\sin\frac{2\pi}{m}$. From Lemma \ref{lem:4.7} one can see that both $u$ and $v$ are $\mathcal C_2$-invariant, while only $u$ is $\mathcal C_m$-invariant for $m>2$. In the second case the assumption that the arbitrary function $\Phi$ depends only on $u$ suffices to obtain a $\mathcal C_m$-invariant function $f$. The rest follows from Lemma \ref{lem:4.6} and the discussion above.

$\qedd$
\end{proof}

\begin{proposition}
Let $(\alpha^1,\alpha^2,\alpha^3)$ be the Maurer-Cartan forms of $SU(2)$. Then, the coframing \eqref{coframe with df}
is an $(I,J,1)$-generalized Finsler structure which descends on $SU(2)\slash \mathcal D_8^*$, with the invariants $I=f_{\alpha 1}$, $J=f_{\alpha 2}$, where 
$f:SU(2)\to \R$ is a smooth function such that $f(a_1,b_0,b_1):=\Phi(\arctan \dfrac{a_0b_0+a_1b_1
}{a_0b_1-a_1b_0}
)$, 
where $\Phi$ is an arbitrary even function of one variable.
\end{proposition}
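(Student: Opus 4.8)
The plan is to combine the general construction of $(I,J,1)$-generalized Finsler structures from an exact form $\varphi=df$ with the descent criterion of Lemma \ref{lem:4.6}; the only genuinely new ingredient is the identification of $\mathcal D_8^*$ together with a sign computation for its action on the solution variable.

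First I would note that the proposed function $f=\Phi\bigl(\arctan\frac{a_0b_0+a_1b_1}{a_0b_1-a_1b_0}\bigr)$ depends only on the second of the two integration variables appearing in the general solution of $f_{\alpha 3}=0$, and hence is itself a solution of $f_{\alpha 3}=0$. By the discussion culminating in \eqref{IJformulas} (equivalently, by Proposition \ref{prop4.3} via the Ricci identities and the condition $f_{\alpha 3}=0$), the coframe \eqref{coframe with df} is therefore automatically an $(I,J,1)$-generalized Finsler structure on $SU(2)$, with invariants $I=f_{\alpha 1}$ and $J=f_{\alpha 2}$. This part requires no new computation.

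The substantive step is the descent to the quotient $SU(2)\slash\mathcal D_8^*$. I would first recall that the binary dihedral (dicyclic) group of order $8$ is the quaternion group $\mathcal D_8^*=\{\pm1,\pm i,\pm j,\pm k\}\subset\Sph^3$. By Lemma \ref{lem:4.6} it suffices to check that $f$ is $\mathcal D_8^*$-invariant, and since $f=\Phi(v)$ with $v=\arctan(\mathcal A/\mathcal B)$, everything reduces to tracking how each element of the group transforms $v$. Substituting $x\in\{\pm1,\pm i,\pm j,\pm k\}$ into the formulas already recorded for $\mathcal A(g\cdot x)$ and $\mathcal B(g\cdot x)$, one finds that $\pm1$ and $\pm i$ fix the ratio $\mathcal A/\mathcal B$, and hence fix $v$, whereas $\pm j$ and $\pm k$ send $\mathcal A/\mathcal B\mapsto-\mathcal A/\mathcal B$, so that $v\mapsto-v$. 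Consequently $f(g\cdot x)=\Phi(\pm v(g))$, and invariance under the full group holds exactly when $\Phi(-v)=\Phi(v)$, i.e. when $\Phi$ is even. Lemma \ref{lem:4.6} then upgrades this to $\mathcal D_8^*$-invariance of $I=f_{\alpha 1}$ and $J=f_{\alpha 2}$ as well, so the whole coframe descends to $SU(2)\slash\mathcal D_8^*$.

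I expect the sign bookkeeping to be the main obstacle, and two points deserve care. One must observe that the radial variable $u=b_0^2+b_1^2$ is \emph{not} $\mathcal D_8^*$-invariant — indeed $j$ sends it to $a_0^2+a_1^2=1-(b_0^2+b_1^2)$ — which is precisely why $f$ is allowed to depend only on the angular variable $v$ and not on $u$. One must also correctly determine on which generators $v$ changes sign, since it is this sign flip under $\pm j,\pm k$ (rather than outright invariance of $v$) that forces the evenness hypothesis on $\Phi$. Once these signs are verified the conclusion is immediate.
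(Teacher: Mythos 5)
Your proposal is correct and follows essentially the same route as the paper: solve $f_{\alpha 3}=0$ so that \eqref{coframe with df} is automatically an $(I,J,1)$-generalized Finsler structure, identify $\mathcal D_8^*$ with the quaternion group $\{\pm 1,\pm i,\pm j,\pm k\}$, verify via the formulas for $\mathcal A(g\cdot x)$, $\mathcal B(g\cdot x)$ that $v\mapsto v$ under $\pm 1,\pm i$ and $v\mapsto -v$ under $\pm j,\pm k$ (so that evenness of $\Phi$ is exactly what forces invariance of $f$, while $u$ fails to be invariant), and then invoke the argument of Lemma \ref{lem:4.6}(2) to descend $I$ and $J$. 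Your sign bookkeeping agrees with the paper's remark that $v(g\cdot x)=\pm v(g)$ on $\mathcal D_8^*$, and your explicit observation that $j$ sends $u$ to $1-u$ makes precise the paper's terser claim that $u$ is not $\mathcal D_{4n}^*$-invariant.
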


\begin{proof}
The binary dihedral group  $\mathcal D_{4n}^*=\{x,y:x^2=(xy)^2=y^n\}$ of order $4n$ has the generators
$x=i$ and $y=\cos\frac{\pi}{n}+j\sin\frac{\pi}{n}$. A straightforward computation followed an application of Lemma  \ref{lem:4.7} shows that $v$ is $\mathcal D_{8}^*$-invariant, while $u$ is not $\mathcal D_{4n}^*$-invariant for any $n$, nor $v$ for $n>2$. Here $\mathcal D_{8}^*=\{\pm 1, \pm i, \pm j, \pm k\}$ is the quaternion group denoted also by $\mathcal Q_8$. The rest follows remarking that $v(g\cdot x)=\pm v(g)$ for $x\in  \mathcal D_{8}^*$ and any $g\in SU(2)$ and following the proof above.

$\qedd$
\end{proof}
\begin{remark}
One can see that since the binary tetrahedral group $\mathcal T^*=\mathcal Q_8^{x,y}\rtimes \mathcal C_3^z$ is generated by $x=i$, $y=j$ and $z=-(1+i+j+k)$, nor $u$ either $v$ can be invariant. The same is true for $\mathcal O^*$ and $\mathcal I^*$. Therefore we cannot construct by this method $(I,J,1)$-generalized Finsler structures on $SU(2)/\Gamma$, for $\Gamma\in\{\mathcal T^*, \mathcal O^*, \mathcal I^*\}$. Nevertheless, the existence of other $(I,J,1)$-generalized Finsler structures is not eliminated.
\end{remark}


\subsection{The case of $\widetilde{E}_2$}\label{sec:4.2}

Let us recall that the group of Euclidean motions of the plane $ASO(2)$ is made of transformations of the form 
\begin{equation}
\begin{pmatrix}
X\\Y
\end{pmatrix}
\mapsto 
\begin{pmatrix}
x\\y
\end{pmatrix}
+R
\begin{pmatrix}
X\\Y
\end{pmatrix},
\end{equation}
where $R\in SO(2)$ is a rotation matrix. It is customary to write it as a matrix Lie group containing matrices of the form
\begin{equation}
\begin{pmatrix}
1 & 0\\
Z & R
\end{pmatrix},\qquad Z=(x,y)^t\in \E^{2},\quad 
R=\begin{pmatrix}
\cos\theta & -\sin\theta\\
\sin\theta & \cos\theta
\end{pmatrix}
\in SO(2),
\end{equation}
or, simply
\begin{equation}
ASO(2)=\{(x,y,\theta): (x,y)^t\in \E^{2},\ \theta\in[0,2\pi)\}
\end{equation}
(see for example \cite{IL2003} for details on $ASO(2)$).

The universal covering $\widetilde E_2$ of $ASO(2)$ is obtained by allowing any real value for $\theta$, i.e. $\widetilde E_2$ is the subgroup of $\mathbb R^3$ with the multiplication
\begin{equation}
(x_1,y_1,\theta_1)\cdot(x_2,y_2,\theta_2)=\Bigg(
\begin{pmatrix}
\cos\theta_1 & -\sin\theta_1\\
\sin\theta_1 & \cos\theta_1
\end{pmatrix}
\begin{pmatrix}
x_2\\y_2
\end{pmatrix}
+\begin{pmatrix}
x_1\\y_1
\end{pmatrix},
\theta_1+\theta_2\Bigg).
\end{equation}

It can be seen therefore that the standard metric in $\mathbb R^3$ leads to left-invariant metrics on $\widetilde E_2$ under this identification (see \cite{GG1995} for details).

We can now realize as $\widetilde E_2=\{(x,y,\theta):z=x+iy,w=\lambda+i\theta\in \mathbb C^2\}\subset \mathbb C^2$, where $(z=x+iy,w=\lambda+i\theta)$ are the standard coordinates of $\mathbb C^2$, and we denote the inclusion $\iota:\widetilde E_2\to \mathbb C^2$. 

It can be now easily checked that the real and imaginary parts of the 1-form
\begin{equation}\label{global MC on E2}
\alpha^1+i\alpha^2=\iota^*(e^{-w}dz)
\end{equation}
give the standard Cartan structure on $\widetilde E_2$. The remaining 1-form $\alpha^3$ being
\begin{equation}
\alpha^3=\iota^*(idw).
\end{equation}

In the local coordinates of $\widetilde E_2$ we get
\begin{equation}
\begin{split}
& \alpha^1=\cos \theta dx+\sin\theta dy\\
& \alpha^2=-\sin\theta dx+\cos\theta dy\\
& \alpha^3=-d\theta.
\end{split}
\end{equation}

Indeed, it can be easily seen that these 1-forms satisfy \eqref{global MC on E2}.

\begin{remark}
Using these coordinates, we can see that the PDE system \eqref{PDE wrt Cartan struct} reads
\begin{equation}
\begin{split}
& I_\theta=J\\
& J_\theta=-I\\
& \sin\theta(I_x+J_y)+\cos\theta(J_x-I_y)=1
\end{split}
\end{equation}
and that it has the solution 
\begin{equation}
 \begin{pmatrix}
I \\ J
\end{pmatrix}=
\begin{pmatrix}
\cos\theta & \sin\theta\\
-\sin\theta & \cos\theta
\end{pmatrix}
\begin{pmatrix}
\eta(x,y)\\
f(x,y)
\end{pmatrix}
\end{equation} 
where we put $\eta(x,y)=\int  f_xdy+y-g(x)$
for any arbitrary functions $f=f(x,y)$ and $g=g(x)$ defined in the plane $(x,y)$.

In order to complete the construction, we need to obtain explicit functions $I$ and $J$ invariant to some admissible subgroup $\Gamma$ of $\widetilde E_2$. Although this is possible, the computations involved are cumbersome. 
\end{remark}

Instead of working directly on the 3-manifold $\Sigma$ we prefer to do the construction using Liouville-Cartan structures taking into account that all $\mathcal K$-Cartan structures come from Liouville-Cartan structures on Riemannian surfaces and that all are conformal to the standard one. 

Following the same strategy as in Proposition \ref{construction prop}
 we consider the Riemannian space form 
$(\Lambda_0,g)=(\mathbb E^2,can)$, where $can$ is the canonical metric on $\E^{2}$, and recall that its full group of isometries is $\R\times O(2)$. 

An orientation preserving subgroup $\mathcal F$ of isometries of the plane, acting freely and discretely on $\mathbb E^2$,  is generated by translations only (see \cite{S} p. 407). This is a discrete subgroup of $\R^2$, the group of all translations of $(\mathbb E^2,can)$. It follows that $\mathcal F$ is isomorphic to $\mathbb Z$ or 
$\mathbb Z\oplus \mathbb Z$ being generated by one or two translations, respectively. 

In this case, $\Lambda:=\Lambda_0\slash\mathcal F$ is a smooth surface, namely an open cylinder or a torus, if $\mathcal F$ is generated by a translation or two translations, respectively. We will restrict here to the compact quotient case, i.e. the 2-dimensional torus $\Lambda=T^{2}$ case.

What we need now is any 1-form $\phi=f(x,y)dx+g(x,y)dy$ on $\Lambda_0$ which is  
$\mathcal F$-invariant, where $can=dx^2+dy^2$. Since the coframe  $(\eta^{1},\eta^{2})=(dx,dy)$ is $\mathcal F$-invariant by definition we need two  $\mathcal F$-invariant functions of two variables defined in plane. If we denote $\mathcal F=<\tau_1,\tau_2>$, where $\tau_1:(x,y)\mapsto (x+1,y)$ and 
$ \tau_2:(x,y)\mapsto (x,y+1)$, then we are looking for functions $f$, $g$ periodic of period 1 in plane satisfying the supplementary condition $-f_y+g_x>0$. 

One way to find such functions is to take any two functions $f$ and $g$ of two variables, or equivalently any 1-form $\phi$,  directly on the torus $T^2$ such that $-f_y+g_x>0$.

Indeed, let us remark that the torus $T^{2}$ is an orientable surface, therefore it exists a nowhere zero 2-form $\Theta$ on $T^{2}$ that gives the orientation, i.e. we can choose $\Theta>0$ or $\Theta<0$ everywhere. 

On the other hand, since $H^{2}(T^{2})=0$ it follows that it always exists a 1-form $\phi=f(x,y)dx+g(x,y)dy$ on $T^{2}$ such that $d\phi=\Theta$, where $(x,y)$ are some local coordinates on 
$T^{2}$. 

Therefore, for the convenient orientation on $T^{2}$, we always have a 1-form $\phi$ on $T^{2}$ such that $d\phi>0$ everywhere. 


In this case, condition 2 in \ref{C3} reads $d\phi=\frac{1}{v^{2}}dx\wedge dy$ and hence 
$v=\frac{1}{\sqrt{-f_{y}+g_{x}}}$, where $f_{y}$ and $g_{x}$ represent usual partial derivatives of the functions $f$ and $g$ with respect to the variables $y$ and $x$, respectively. 

Having all these done, \eqref{induced GFS} implies that the coframe 
\begin{equation}\label{GFS coframe on E_{2}}
\begin{split}
& \omega^1= \sqrt{-f_y+g_x}\cdot\alpha^1\\
& \omega^2= -\nu^*(\phi)-\alpha^3\\
& \omega^3= \sqrt{-f_y+g_x}\cdot \alpha^2
\end{split}
\end{equation}
is an $(I,J,1)$-generalized Finsler structure on $\Sigma=\widetilde{E}_2\slash \Gamma$, one of the five $T^2$-bundles over $\Sph^1$ with periodic monodromy, where $(\alpha^{1},\alpha^{2}, \alpha^{3})$ is the standard Cartan structure on $\widetilde E_{2}$, $\nu:ST(T^{2})\to T^{2}$ is the usual canonical bundle projection,
 and $\phi$ is the 1-form on $T^{2}$ defined above. 
 The subgroup $\Gamma$ is the inverse image of $\mathcal F$ through the projection map $\widetilde{E}_2 \mapsto Isom_o(\mathbb E^2, can)$. The invariants of this $(I,J,1)$-generalized Finsler structure are
 \begin{equation}
 \begin{split}
 & I=-\frac{f}{\sqrt{-f_{y}+g_{x}}}-\frac{f_{yy}-g_{xy}}{2(-f_{y}+g_{x})\sqrt{-f_{y}+g_{x}}},\\
 & J=-\frac{g}{\sqrt{-f_{y}+g_{x}}}+\frac{f_{xy}-g_{xx}}{2(-f_{y}+g_{x})\sqrt{-f_{y}+g_{x}}}.
 \end{split}
 \end{equation}

In the case of $\mathcal F$ isomorphic to $\mathbb Z\oplus \mathbb Z$, i.e. generated by two translations, the corresponding discrete group $\Gamma$ of $\widetilde E_2$ is the discrete group with monodromy matrix 
$A_1=\begin{pmatrix}
1 & 0 \\
0 & 1 
\end{pmatrix}
$
and the 3-manifold $\Sigma$ is diffeomorphic with the 3-torus $T^3$. We point out that starting with Liouville-Cartan structures, we obtain only homothety classes of Cartan structures with all leaves of the codimension one  foliation $\{\alpha^1=0,\alpha^2=0\}$ closed (see \cite{GG1995}, Theorem 7.4).

Using complex coordinates, the 2-dimensional real torus $\Lambda_{0}=T^{2}$ can be identified with the 1-dimensional complex torus $\Lambda_{0}=\mathbb C\slash \Gamma'$, where $\Gamma'=<1,z_{0},z_{1}>$. Here $z_{0}\in \mathcal H^{2}$ and $z_{1}\in \mathbb C$ are arbitrary. Remark that $\Gamma'$ is discrete provided $qz_{1}\in<1,z_{0}>$ for some positive integer $q$ (one can take $q$ minimal, namely the order of $z_{1}$ modulo $<1,z_{0}>$).

Then the corresponding discrete subgroup $\Gamma$ of $\widetilde E_{2}\subset \mathbb C^{2}$ is the lattice 
\begin{equation}\label{lattice Gamma on E_{2}}
\Gamma=<(1,0),(z_{0},0), (z_{1},2\pi i r_{1})>,
\end{equation}
where $z_{0},z_{1}$ are as above and $r_{1}$ is a positive integer. It follows that the 3-manifold $\Sigma=\widetilde E_{2}\slash\Gamma$ is the 3-torus $T^{3}=\Lambda_{0}\times \Sph^{1}$ which covers $ST\Lambda_{0}$ (see \cite{GG1995}, p. 206 for details). 

Therefore the coframe \eqref{GFS coframe on E_{2}} is an $(I,J,1)$-generalized Finsler structure on the compact 3-manifold $\Sigma=\widetilde E_{2}\slash\Gamma=T^{3}$, where $\Gamma$ is given in 
\eqref{lattice Gamma on E_{2}}.

Similarly, one can obtain $\Gamma'$-invariant Liouville-Cartan structures on $\E^{2}$, where $\Gamma'$ is another discrete subgroup of $Isom_{o}(\E^{2},can)$ and compute its corresponding discrete subgroup $\Gamma$ of $\widetilde E_{2}$, but the construction we gave above is enough for proving the following result.

\begin{proposition}\label{lemma: E_2}
The standard $0$-Cartan structure on $\Sigma=\widetilde{E}_2\slash \Gamma=T^{3}$ induces a non-trivial  $(I,J,1)$-generalized Finsler structure on the 3-manifold $\Sigma$, where $\Gamma$ is the lattice  \eqref{lattice Gamma on E_{2}}.
\end{proposition}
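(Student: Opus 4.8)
The plan is to specialize the general construction of Proposition \ref{construction prop} to the flat model $(\Lambda_0,g)=(\mathbb{E}^2,can)$ and then to pull the resulting structure back along the covering $\Sigma=\widetilde E_2\slash\Gamma\to ST\Lambda_0\slash d\mathcal F$. First I would record that in this model the $g$-orthonormal coframe is simply $(\eta^1,\eta^2)=(dx,dy)$, so that the structure functions in \eqref{struct eq downstairs} vanish, $a=b=0$, and consequently $\mathcal K=0$. With these values the positivity requirement of condition C \ref{C3} collapses to $-f_y+g_x>0$, and the scaling function \eqref{v general} becomes $v=1/\sqrt{-f_y+g_x}$. This reduces the entire matter to producing a single admissible $1$-form $\phi=f\,dx+g\,dy$ on $\Lambda=T^2$.

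Granting such a $\phi$, substituting $a=b=\mathcal K=0$ into \eqref{induced GFS} yields precisely the coframe \eqref{GFS coframe on E_{2}}, which by Proposition \ref{construction prop} is an $(I,J,1)$-generalized Finsler structure on $ST\Lambda_0\slash d\mathcal F$. Pulling back along the covering map identifies the carrier with $\Sigma=\widetilde E_2\slash\Gamma=T^3$, where $\Gamma$ is the inverse image of $\mathcal F$ under the projection $\widetilde E_2\to Isom_o(\mathbb{E}^2,can)$, i.e. the lattice \eqref{lattice Gamma on E_{2}}. The invariants are then read off directly from \eqref{IJ generali} by setting $a=b=0$, producing the two displayed formulas for $I$ and $J$ in terms of $f$, $g$ and their first and second derivatives. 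Non-triviality is now immediate: these formulas depend explicitly on $f$, $g$ and their derivatives, so any choice of $\phi$ for which $I$ and $J$ are not simultaneously constant settles the claim, and I would simply exhibit one such $\phi$.

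The step I expect to be the main obstacle is the existence of the admissible $\phi$ itself, namely an $\mathcal F$-invariant $1$-form on $T^2$ with $d\phi=(-f_y+g_x)\,dx\wedge dy>0$ everywhere. This is the genuinely topological heart of the argument, and I would approach it through the orientability of $T^2$: choosing a nowhere-vanishing $2$-form $\Theta$ compatible with the orientation and solving $d\phi=\Theta$. The delicate point is reconciling the global positivity of $d\phi$ with the $\mathcal F$-invariance required for the descent to $\Sigma$, so this is exactly where the argument must be handled with care; once a satisfactory $\phi$ is secured, everything else reduces to routine substitution into the formulas already established.
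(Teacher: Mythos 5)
Your reduction is faithful to the paper's own route: specializing Proposition \ref{construction prop} to $(\Lambda_0,g)=(\E^2,can)$, so that $a=b=0$, $\mathcal K=0$ and $v=1/\sqrt{-f_y+g_x}$, and reading the invariants off \eqref{IJ generali}, is exactly what the paper does, and those substitutions are indeed routine. The problem is the step you defer to the end --- producing an $\mathcal F$-invariant $\phi=f\,dx+g\,dy$ with $-f_y+g_x>0$ everywhere --- and it is not a delicate point to be handled with care: it is impossible. An $\mathcal F$-invariant $1$-form on $\E^2$ is precisely the pullback of a $1$-form on the closed torus $T^2=\E^2\slash\mathcal F$, and Stokes' theorem gives $\int_{T^2}d\phi=0$, which is incompatible with $d\phi=(-f_y+g_x)\,dx\wedge dy>0$ everywhere. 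Equivalently, $H^2_{dR}(T^2)\cong\R\neq 0$, and a nowhere-vanishing $2$-form $\Theta$ on a closed oriented surface has $\int_{T^2}\Theta\neq 0$, so the equation $d\phi=\Theta$ that you propose to solve has no solution. (The paper's own proof founders at exactly this point: it asserts $H^{2}(T^{2})=0$ in order to solve $d\phi=\Theta$, and that assertion is false.)

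Moreover, the gap cannot be closed by a cleverer choice of $\phi$, because --- granting Theorem \ref{conf classes of K-Cartan}(3), which the paper quotes from the literature --- the statement itself fails. If $(\omega^1,\omega^2,\omega^3)$ were an $(I,J,1)$-generalized Finsler structure on a closed quotient $\Sigma=\widetilde E_2\slash\Gamma$, then $\omega^1\wedge\omega^3=-d\omega^2$ would be exact, and $(\alpha^1,\alpha^2)=(\omega^1,\omega^3)$ is a Cartan structure; by Theorem \ref{conf classes of K-Cartan}(3) its conformal class contains a $0$-Cartan structure $(\beta^1,\beta^2)=w\,(\omega^1,\omega^3)$ with $w>0$, so that $\omega^1\wedge\omega^3=w^{-2}\,\beta^1\wedge\beta^2$. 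But for a $0$-Cartan structure one has $d\beta^3=0$, and then no positive multiple $u\,\beta^1\wedge\beta^2$ can be exact on closed $\Sigma$: if $u\,\beta^1\wedge\beta^2=d\gamma$, then
\[
0\neq\int_\Sigma u\,\beta^1\wedge\beta^2\wedge\beta^3=\int_\Sigma d\left(\gamma\wedge\beta^3\right)=0,
\]
a contradiction, the left integral being nonzero because $u\,\beta^1\wedge\beta^2\wedge\beta^3$ is a volume form. So $T^3$ carries no $(I,J,1)$-generalized Finsler structure at all, and no choice of $\phi$ (or of the lattice $\Gamma$ in \eqref{lattice Gamma on E_{2}}) can rescue the construction; the correct response to Proposition \ref{lemma: E_2} is to identify this cohomological obstruction rather than to seek a proof.
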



\subsection{The case of $\widetilde{SL}_2$}\label{sec:4.3}


Let us denote by $\mathcal H^2$ the upper half plane in $\mathbb C$, the coordinates of 
$\mathcal H^2\times \mathbb C$ with $(z,\widetilde w)$ and the corresponding point in 
$\mathcal H^2\times \mathbb C^*$ with $(z,e^{\widetilde w})=(z,w)$.

We identify $\widetilde{SL}_2$, the universal covering of the unit tangent bundle $ST\mathcal H^2$, with 
$\mathcal H^2\times i\R$ with coordinates $(z,\theta)$, where we denote by $z=x+iy\in \mathbb C$ the standard complex coordinate, and $\widetilde w=\lambda +i\theta$. We consider the inclusion 
$\iota: \mathcal H^2\times (i\R)\to \mathcal H^2\times \mathbb C$, i.e. the 3-manifold  
$\widetilde{SL}_2\equiv  \mathcal H^2\times (i\R)$ is transversal to the vector $\dfrac{\partial}{\partial \lambda}$.

It can be easily verified that the real and imaginary parts of the 1-form
\begin{equation}\label{E_{2} standard Cartan forms}
\alpha^1+i\alpha^2=\iota^*(e^wdz)
\end{equation}
give the standard Cartan structure on $\widetilde{SL}_2$. 

Since the natural inclusion $\iota:\widetilde{SL}_{2}\to\bbC^{2}$ is given by 
$\iota(z,i\theta)=(z,0+i\theta)$ we have
\begin{equation*}
\iota^{*}dw=-\iota^{*}d\bar w=id\theta.
\end{equation*}

On the other hand, by adding and then subtracting formula \eqref{E_{2} standard Cartan forms} with its complex conjugate $\alpha^{1}-i\alpha^{2}$ we obtain concrete formulas for $\alpha^{1}$ and $\alpha^{2}$. Using now Lemma \ref{Lemma 3.3} it follows that the remaining 1-form $\alpha^3$ is
\begin{equation}
\alpha^3=-i\iota^*(dw).
\end{equation}

If we consider the 3-manifold $\widetilde{SL}_2$ to be defined by the equations $(x=x,y=y,\lambda=\log y,\theta=\theta)$, as a submanifold of $\mathcal H^{2}\times \mathbb C$, then in local coordinates $(x,y,\theta)$ of $\widetilde{SL}_2$ we obtain
\begin{equation}
\begin{split}
& \alpha^1=\frac{1}{y}(\cos \theta dx+\sin\theta dy)\\
& \alpha^2=\frac{1}{y}(-\sin\theta dx+\cos\theta dy)\\
& \alpha^3=-d\theta-\frac{1}{y}dx.
\end{split}
\end{equation}


\begin{remark}
Using these coordinates, we can see that the PDE system \eqref{PDE wrt Cartan struct}  reads
\begin{equation}
\begin{split}
& I_\theta=-J\\
& J_\theta=I\\
& \sin\theta(yI_x+I_\theta+xJ_y)-\cos\theta(yJ_x+J_\theta-xI_y)=2
\end{split}
\end{equation}
and it has the solution 
\begin{equation}
\begin{split}
& I(x,y,\theta)=\frac{2x-C}{y}\sin\theta\\
& J(x,y,\theta)=\frac{2x-C}{y}\cos\theta
\end{split}
\end{equation} 
for any arbitrary constant $C$.

Similarly with the case of $\widetilde E_2$, although it is possible to check the invariance of these functions under the action of the discrete subgroups of $\widetilde{SL}_2$, since all Cartan structures are conformal to the standard one and this corresponds to the Liouville-Cartan structure induced from the space form $(\mathcal H^2,can)$, we prefer to follow the construction in Section \ref{sec:5}.

\end{remark}

Let us recall that the orientation preserving isometries group $Isom_o(\mathcal H^2,can)$ can be identified with the group $PSL_2$, which is the quotient of $SL_2$ by its central subgroup of order two (see \cite{S}), namely $PSL_{2}=SL_{2}\slash\{\pm I_{2}\}$, where $I_{2}$ is the 2x2 identity matrix. 

Indeed, the action of $SL_2$ on $\mathcal H^2$ is defined by the usual {\it M\"obius transformation}
\begin{equation}
A
\cdot z=\dfrac{az+b}{cz+d},
\end{equation}
where $z=u+iv$, $v>0$, $A=\begin{pmatrix}
a & b \\
c & d
\end{pmatrix}\in\mathcal M_{2\times 2}(\R)$, 
and $\det(A)=1$, and a simple computation shows that $A\cdot z=(-1)A\cdot z$. It follows that the equivalence relation $A\sim B$ if and only $A=\pm B$, defined on $SL_{2}$, is natural and we obtain the quotient space $PSL_{2}:=SL_{2}\slash\sim$, that motivates the definition above.

\begin{remark}
It is also known that $PSL_{2}$ can be identified with the unit tangent bundle $ST\mathcal H^{2}$ by choosing any vector $\xi\in ST\mathcal H^{2}$ at some fixed point $(x,y)\in \mathcal H^{2}$ and defining the map
\begin{equation*}
PSL_{2}\to ST\mathcal H^{2},\qquad A\mapsto A_{*}(\xi),
\end{equation*}
where $A_{*}$ is the differential of $A$ (see \cite{GG1995}, Subsection 5.3 for details).

It can be seen that under this map, the natural metric on $ST\mathcal H^{2}$ induced from 
$(\mathcal H^{2},can)$ is pulled back to a left invariant metric on $PSL_{2}$, which is independent of the choice of $\xi$. This metric can now be pulled back to $\widetilde{SL}_{2}$ 
by the universal covering mapping $\widetilde{SL}_{2}\to PSL_{2}$, 
if necessary. 

The universal covering $\widetilde{SL}_{2}$ of ${SL}_{2}$ is an example of finite dimensional Lie group that is not a matrix group, i.e. $\widetilde{SL}_{2}$ admits no faithful, finite-dimensional representation.

This means that the mapping $G=\widetilde{SL}_{2}\to Isom_{o}\Lambda_{0}=PSL_{2}$ that maps the discrete subgroup $\Gamma$ into its image $\Gamma'$ is just the universal covering map.
\end{remark}

We also recall that the elements $A\in PSL_{2}$ can be classified by their traces as follows. If the trace of the matrix $A$ is less, equal or greater than 2, the element $A$ of $ PSL_{2}$ is called {\it elliptic}, {\it parabolic} or {\it hyperbolic}, respectively. 

It is also known that a subgroup $\mathcal F$ of $PSL_{2}$ is discrete if and only if it acts properly discontinuously on $\mathcal H^{2}$, namely the following three conditions hold
\begin{enumerate}
\item the $\mathcal F$-orbit of any point is locally finite;
\item  the $\mathcal F$-orbit of any point is discrete and the stabilizer of that point is finite;
\item for any point, there is a neighborhood of that point, say $V$, for which only finitely many 
$A\in \mathcal F$ satisfy $A(V)\cap V\neq \emptyset$.
\end{enumerate}

Such a discrete subgroup $\mathcal F$ is called a {\it Fuchsian group}.

\begin{remark}\label{Fuchs groups propert}
\begin{enumerate}
\item It is known that a Fuchsian group is abelian if and only if it is cyclic.
\item Any hyperbolic or parabolic cyclic subgroup of $PSL_{2}$ is Fuchsian, while an elliptic cyclic subgroup is Fuchsian if and only if it is finite.
\end{enumerate}
\end{remark}

It can be seen that $PSL_2$ is generated by the matrices 
\begin{equation}\label{Ai matrices}
A_1:=\begin{pmatrix}
1 & s \\
0 & 1
\end{pmatrix}
,\quad 
A_2:=\begin{pmatrix}
\lambda & 0 \\
0 & \frac{1}{\lambda}
\end{pmatrix},\quad 
A_3:=\begin{pmatrix}
0 & 1 \\
-1 & 0
\end{pmatrix},
\end{equation}
which are parabolic, hyperbolic and elliptic elements of $PSL_{2}$, respectively, where $s\in \mathbb R$ and $\lambda>0$.

These matrices correspond to  three types of isometries: {\it translations, dilatations} and {\it rotations}, respectively, which form the full group of orientation preserving isometries of $(\mathcal H^2, can)$. 

Let us point out here that another kind of isometry of $(\mathcal H^{2}, can)$ is the {\it reflection} $R$, but one can see that this is not an orientation preserving matrix, therefore unlikely $A_{i}$, $i=1,2,3$, it results $R\notin PSL_{2}$. 
By direct computation taking into account the following actions 
\begin{equation}
A_1:(x,y)\mapsto (x+s, y),\quad A_2:(x,y)\mapsto \lambda^2(x,y),
\quad A_{3}:(x,y)\mapsto (-\dfrac{x}{x^2+y^2},\dfrac{y}{x^2+y^2})
.
\end{equation}

In order to obtain the corresponding discrete orientation preserving subgroups $\mathcal F$ of 
$(\mathcal H^{2}, can)$, let us remark that, since $A_{1}$ and $A_{2}$ are parabolic and hyperbolic, respectively, for some fixed $s$ and $\lambda>0$, the cyclic groups $\mathcal F_{1}(s):=<A_{1}(s)>$, $\mathcal F_{2}(\lambda):=<A_{2}(\lambda)>$ are Fuchsian groups due to Remark \ref{Fuchs groups propert}. Here we denote by $A_{1}(s)$ and $A_{2}(\lambda)$ the matrices $A_{1}$ and $A_{2}$ in \eqref{Ai matrices} defined for some fixed $s$ and $\lambda>0$, respectively.

Similarly, in the case of $A_{3}$, let us observe that $(A_{3})^{2}=-I_{2}$ and therefore the corresponding discrete subgroup $<A_{3}>$ is any finite group of order 2, hence $\mathcal F_{3}:=<A_{3}>$ is also Fuchsian. 

It is known that the hyperbolic 2-orbifold $\Lambda=\Lambda_{0}\slash\mathcal F$ is a manifold when $\mathcal F$ contains no elements of finite order, in other words, when $\mathcal F$ contains no elliptic elements.

Therefore, for some fixed $s$ and $\lambda>0$, the quotient sets $\Lambda_{1}=\mathcal H^{2}\slash \mathcal F_{1}(s)$ and $\Lambda_{2}=\mathcal H^{2}\slash \mathcal F_{2}(\lambda)$ are hyperbolic 2-manifolds. Taking into account the discussion above, it follows $\Lambda_1$ can be identified with the open upper half cylinder, and $\Lambda_2$ with an open transversally sectionned torus (a cylinder cut by 2 transversal plans without both lids).

What we need now is to find two functions of two variables $f(x,y)$, $g(x,y)$ defined on $\mathcal H^2$, which are invariant under the action of a finite order discrete subgroup $\mathcal F\subset Isom_o(\mathcal H^2, can)$ and such that 
\begin{equation}\label{phi cond on SL2}
-f_{\eta 2}+f+g_{\eta_1}-1>0,
\end{equation}
 where subscripts here are directional derivatives with respect to the orthonormal coframe $(\eta^1,\eta^2)$ of $(\mathcal H^2, can)$, namely $df=f_{\eta 1}\eta^1+f_{\eta 2}\eta^2$, and similar for $g$. Indeed, the structure equations \eqref{struct eq downstairs}
 of this coframe are 
\begin{equation*}
d\eta^1=\eta^1\wedge\eta^2,\quad d\eta^2=0, 
\end{equation*}
and therefore the structure functions $a$ and $b$ take the values 1 and 0, respectively.

 In $(x,y)$-coordinates the canonical metric $can$ on $\mathcal H^{2}$ is given by $\dfrac{(dx)^{2}+(dy)^{2}}{y^{2}}$ and therefore we have 
\begin{equation}
\eta^1=\dfrac{1}{y}dx,\quad \eta^2=\dfrac{1}{y}dy.
\end{equation} 
Moreover, the Levi-Civita connection form is $\eta^3=-\dfrac{1}{y}dx$, the Gauss curvature is $\mathcal K=-1$, and hence, from \eqref{phi cond on SL2},
 we obtain the condition 
\begin{equation}\label{sl2 condition}
-yf_y+f+yg_x-1>0,
\end{equation}
where the subscripts represent usual partial derivatives, namely $df=f_xdx+f_ydy$. One can easily see that 
$(f_{\eta 1},f_{\eta 2})=y(f_x,f_y)$. 

\bigskip

In the case of the {\it translations} group $\mathcal F_{1}$, we can consider $f=f(y)$ and $g=g(y)$, functions invariant to the translation subgroup defined above. In this case, \eqref{sl2 condition} reads $-yf'(y)+f-1>0$, and $g(y)$ arbitrary, but this kind of $f$ always exists (for instance $f(y)=my+n$, for any $m>0$ and $n>1$ will do). Therefore, 
for any $\phi=\dfrac{1}{y}(f(y)dx+g(y)dy)$, the coframe 
\begin{equation}\label{translations induced GFS SL2} 
\begin{split}
& \omega^1=\sqrt{-yf'(y)+f-1}\ \alpha^1\\
& \omega^2=-\nu^*\phi-\alpha^3\\
& \omega^3=\sqrt{-yf'(y)+f-1}\ \alpha^2
\end{split}
\end{equation}
 is a $(I,J,1)$-generalized Finsler structure on $\Sigma=\widetilde{SL}_2\slash \Gamma$, where $\Gamma$ is the lift of the translations group $\mathcal F_{1}$ on $(\mathcal H^2, can)$ to $\widetilde{SL}_{2}$. The structure functions are given by
\begin{equation}
I=-\frac{1}{\sqrt{-yf'(y)+f-1}}[f+\frac{y^2f''(y)}{2(-yf'(y)+f-1)}],\quad J=-\frac{1}{\sqrt{-yf'(y)+f-1}}g(y).
\end{equation}

Indeed, since $v^{2}=\frac{1}{-yf'(y)+f-1}$, by direct computation we get $v_x=0$, $v_y=\dfrac{yf''(y)}{2}v^3$ and therefore $v_{\eta 1}=0$, $v_{\eta 2}=yv_y=\dfrac{y^2f''(y)}{2}v^3$ and from \eqref{IJ generali} the formulas above follow. 

In the case of $f$ linear function, we get $I=-\frac{1}{\sqrt{-yf'(y)+f-1}}f$, $J=-\frac{1}{\sqrt{-yf'(y)+f-1}}g$, with $g$ arbitrary function of one variable. Obviously this is non trivial 
$(I,J,1)$-generalized Finsler structure. 

\bigskip

In the case of {\it dilatations} group $\mathcal F_{2}$, if we consider $f(x,y)=\bar f(\tau)_{|\tau=\frac{x}{y}}$ and $g(x,y)=\bar g(\tau)_{|\tau=\frac{x}{y}}$, these functions are invariant to dilatations. A stronger restriction would be $f=0$, hence we have $\phi=\bar g(\tau)_{|\tau=\frac{x}{y}}\eta^{2}$, and $d\phi=\bar g'(\tau)\tau_{\eta 1}\eta^1\wedge\eta^2$. In this case, we get 
$(\tau_{\eta 1},\tau_{\eta 2})=(1,-\tau)$, and
imposing the condition 
$\bar g'(\tau)\tau_{\eta 1}-1=\bar g'(\tau)-1>0$, we get $v^2=\dfrac{1}{\bar g'(\tau)-1}$. A straightforward computation shows 
\begin{equation}
v_x=-\frac{v^{3}}{2y}\bar g''(\tau),\quad v_y=\frac{xv^{3}}{2y^2}\bar g''(\tau),
\end{equation}
and from here
\begin{equation}
v_{\eta 1}=-\frac{v^{3}}{2}\bar g''(\tau),\quad v_{\eta 2}=\frac{xv^{3}}{2y}\bar g''(\tau).
\end{equation}

Therefore, the coframe 	
\begin{equation}\label{dilatations induced GFS SL2} 
\begin{split}
& \omega^1=\sqrt{\bar g'(\tau)-1}\ \alpha^1\\
& \omega^2=-\nu^*\phi-\alpha^3\\
& \omega^3=\sqrt{\bar g'(\tau)-1}\ \alpha^2
\end{split}
\end{equation}
for any $\phi=\dfrac{1}{y}\bar g(\dfrac{x}{y})dy$, $\bar g(\tau)>\tau$ is an $(I,J,1)$-generalized Finsler structure on $\Sigma=\widetilde{SL}_2\slash \Gamma$, where $\Gamma$ is the lift of the finite 
order dilatations group 
$\mathcal F_{2}$ of $(\mathcal H^2, can)$ to the universal covering space $\widetilde{SL}_{2}$.

The structure functions are given by
\begin{equation}
I=-\frac{x}{2y}
\frac{\bar g''(\tau)}{(\bar g'(\tau)-1)^{\frac{3}{2}}},\quad 
J=-\frac{\bar g''(\tau)}{2(\bar g'(\tau)-1)^{\frac{3}{2}}}-
\frac{\bar g(\tau)}{\sqrt{\bar g'(\tau)-1}},\quad 
\tau=\frac{x}{y}.
\end{equation}

A similar study can be done for other examples of Fuchsian groups, but these two cases are enough for our purposes. 

We have
\begin{proposition}\label{lemma: SL_2}
The standard $(-1)$-Cartan structure on $\Sigma=\widetilde{SL}_2\slash \Gamma$ induces a non-trivial  $(I,J,1)$-generalized Finsler structures on the 3-manifold $\Sigma$, where $\Gamma$ is one of the 
lifts of the groups $\mathcal F_{1}$ or $\mathcal F_{2}$ to $\widetilde{SL}_2$.
\end{proposition}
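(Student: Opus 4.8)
The plan is to recognize this proposition as a direct specialization of the general construction in Proposition \ref{construction prop} to the Riemannian space form $(\Lambda_0, g) = (\mathcal H^2, can)$, whose Gauss curvature is $\mathcal K = -1$ and whose associated Liouville--Cartan structure is precisely the standard $(-1)$-Cartan structure on $\widetilde{SL}_2$. Essentially all of the analytic work has already been assembled in the discussion preceding the statement; what remains is to verify the hypotheses of that proposition for the two chosen isometry subgroups and to exhibit explicit data witnessing non-triviality.

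First I would confirm that $\mathcal F_1$ and $\mathcal F_2$ are admissible discrete subgroups of $Isom_o(\mathcal H^2, can) = PSL_2$. By Remark \ref{Fuchs groups propert}, a cyclic subgroup generated by a parabolic or by a hyperbolic element is Fuchsian, so both $\mathcal F_1 = \langle A_1 \rangle$ and $\mathcal F_2 = \langle A_2 \rangle$ act properly discontinuously on $\mathcal H^2$; since neither contains elliptic elements, the quotients $\Lambda_1, \Lambda_2$ are genuine hyperbolic surfaces rather than orbifolds. Their lifts $\Gamma$ to the universal cover $\widetilde{SL}_2$ are therefore discrete, and $\Sigma = \widetilde{SL}_2/\Gamma$ covers $ST\Lambda_0/d\mathcal F$ as required by Proposition \ref{construction prop}.

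Next I would produce, in each case, an $\mathcal F$-invariant $1$-form $\phi = f\,\eta^1 + g\,\eta^2$ satisfying condition C \ref{C3}, i.e. the positivity \eqref{sl2 condition}. For the translation group $\mathcal F_1$, invariance forces $f = f(y)$ and $g = g(y)$, and the condition reduces to $-y f'(y) + f(y) - 1 > 0$; the linear choice $f(y) = my + n$ with $m > 0$, $n > 1$ satisfies this with $g$ free. For the dilatation group $\mathcal F_2$, invariance forces $f$ and $g$ to depend only on $\tau = x/y$, and taking $f \equiv 0$ reduces the condition to $\bar g'(\tau) - 1 > 0$. In either case Proposition \ref{construction prop} then delivers the coframe \eqref{translations induced GFS SL2}, respectively \eqref{dilatations induced GFS SL2}, as an $(I,J,1)$-generalized Finsler structure on $\Sigma$, with invariants $I$ and $J$ read off from \eqref{IJ generali}.

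Finally I would check non-triviality, namely that $I$ and $J$ are not simultaneously constant. For the linear choice $f(y) = my + n$ in the translation case one has $f'' = 0$, so $I = -(my+n)/\sqrt{n-1}$, which is visibly non-constant in $y$; for the dilatation case any $\bar g$ with $\bar g'' \not\equiv 0$ makes $I = -\tfrac{x}{2y}\,\bar g''(\tau)/(\bar g'(\tau)-1)^{3/2}$ non-constant. I expect the only delicate bookkeeping to be matching the discrete group $\Gamma$ upstairs with the chosen $\mathcal F_i$ downstairs and justifying that $\Sigma = \widetilde{SL}_2/\Gamma$ is the correct quotient; but since $G = \widetilde{SL}_2 \to PSL_2$ is simply the universal covering map, the lift is unambiguous and this step is routine. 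The genuine mathematical content is entirely the solvability of \eqref{sl2 condition} by $\mathcal F$-invariant data, which the explicit examples settle.
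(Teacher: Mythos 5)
Your proposal is correct and follows essentially the same route as the paper: its proof of this proposition is exactly the specialization of Proposition \ref{construction prop} to $(\Lambda_0,g)=(\mathcal H^2, can)$ with the Fuchsian groups $\mathcal F_1$, $\mathcal F_2$, the same invariant data ($f(y)=my+n$, $m>0$, $n>1$ with $g$ free in the translation case; $f\equiv 0$, $\bar g'(\tau)>1$ in the dilatation case), and the resulting coframes \eqref{translations induced GFS SL2} and \eqref{dilatations induced GFS SL2} with non-constant invariants read off from \eqref{IJ generali}. The only slight overstatement is that invariance under the cyclic groups \emph{forces} $f,g$ to depend only on $y$ (resp.\ only on $\tau=x/y$) --- it only forces periodicity, resp.\ scale-invariance, and these are sufficient choices, as in the paper --- but this does not affect the argument.
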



\section{Relation with the conformal classes of Cartan structures}\label{sec:6}

 Recall that the {\bf homothety class}  of a contact circle $(\alpha^1,\alpha^2)$ is the collection of all pairs $(\tilde{\alpha}^1,\tilde{\alpha}^2)$  obtained from $(\alpha^1,\alpha^2)$ by multiplication by positive functions $v$ and rotation by a constant angle $\theta$.

The {\bf conformal class}  of a contact circle $(\alpha^1,\alpha^2)$ is the collection of all pairs $(\tilde{\alpha}^1,\tilde{\alpha}^2)$  obtained from $(\alpha^1,\alpha^2)$ by multiplication by the same positive function $v$.

 If a contact circle is $\mathcal K$-Cartan, then all contact circles homothetic to it are also $\mathcal K$-Cartan provided $dv\in<\alpha^1,\alpha^2>$.

\begin{theorem} [\cite{GG2002}]\label{conf classes of K-Cartan}
\begin{enumerate}
\item  Let $\Sigma=SU(2)\slash \Gamma$ be a (compact) left quotient of $SU(2)$. 
Then on $\Sigma$, up to homothety and diffeomorphism, 
the set of Cartan structures is given by  descending to $\Sigma$ the following family of Cartan structures living on $\Sph^3$:
\begin{equation}\label{sphere family}
\alpha^1+i\alpha^2=4\iota^\ast \left(az_1dz_2-(1-a)z_2dz_1\right), \qquad 0 < a<1,
\end{equation}
where $\iota$ is the standard inclusion of $\Sph^3$ in $\mathbb C^2$. For $a=\frac{1}{2}$ one obtains the standard Cartan structure.
\item Let $\Sigma=\widetilde{SL}_2\slash \Gamma$ be a (compact) left quotient of $\widetilde{SL}_2$. Then in each conformal class of Cartan structures on $\Sigma$ there is one and only one (-1)-Cartan structure.
\item Let $\Sigma=\widetilde{E}_2\slash \Gamma$ be a (compact) left quotient of $\widetilde{E}_2$. Then each conformal class of Cartan structures on $\Sigma$ contains a (0)-Cartan structure. This (0)-Cartan structure is unique up to multiplication by a positive constant.
\end{enumerate}
\end{theorem}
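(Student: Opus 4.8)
The plan is to treat all three cases through a single mechanism. A $\mathcal K$-Cartan structure is, by definition, the same as a $(0,0,\mathcal K)$-generalized Finsler structure, and its defining foliation $\{\alpha^1=0,\alpha^2=0\}$ realizes $\Sigma$ as a Seifert fibration over a closed $2$-orbifold $\Lambda$ onto which $\mathcal K$ descends (recall $d\mathcal K\in\langle\alpha^1,\alpha^2\rangle$). Up to pullback, $(\alpha^1,\alpha^2)$ is the tautological coframe of an orbifold metric $g$ on $\Lambda$ whose Gauss curvature is exactly $\mathcal K$, and a conformal change $(\wt\alpha^1,\wt\alpha^2)=v(\alpha^1,\alpha^2)$ of the Cartan structure corresponds to replacing $g$ by the conformally related metric $v^{2}g$. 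Thus classifying Cartan structures up to conformal equivalence reduces to an orbifold uniformization problem, and the three groups $G$ are separated precisely by the sign of $\chi^{\mathrm{orb}}(\Lambda)$: spherical for $SU(2)$, Euclidean for $\wt E_2$, hyperbolic for $\wt{SL}_2$, matching the three base geometries of the Seifert fibrations.

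First I would make the reduction quantitative. Starting from Lemma \ref{Lemma 3.3}, I would record how $\eta$ and $\mathcal K$ transform under $(\wt\alpha^1,\wt\alpha^2)=v(\alpha^1,\alpha^2)$: writing $\wt\eta=\eta-\ast d\log v$ with $\eta=\alpha^3$ and differentiating, exactly as in the computation preceding \eqref{alg eq for v}, one isolates the $\alpha^1\wedge\alpha^2$-coefficient of $d\wt\eta$ and obtains the classical conformal transformation rule for Gauss curvature,
\begin{equation}
\wt{\mathcal K}\,v^2=\mathcal K-\Delta_g\log v.
\end{equation}
Combined with Gauss--Bonnet, $\int_\Lambda \mathcal K\,dA=2\pi\,\chi^{\mathrm{orb}}(\Lambda)$, this forces the sign of any constant value of $\mathcal K$ attainable within a conformal class to agree with the sign of $\chi^{\mathrm{orb}}(\Lambda)$.

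Next I would invoke the uniformization theorem for closed $2$-orbifolds to settle the two rigid cases. When $\chi^{\mathrm{orb}}(\Lambda)<0$ (the $\wt{SL}_2$ case), each conformal class contains a unique metric of curvature $-1$; solving the Liouville-type equation $\Delta_g\log v=\mathcal K+v^2$ for $v>0$ is the existence statement, and the uniqueness of the hyperbolic representative transfers verbatim to Cartan structures since $v$ is determined by the metric, yielding exactly one $(-1)$-Cartan structure per conformal class and proving (2). When $\chi^{\mathrm{orb}}(\Lambda)=0$ (the $\wt E_2$ case), each conformal class contains a flat metric, unique up to an overall positive scaling, which gives the $(0)$-Cartan structure unique up to a positive constant as in (3).

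The main obstacle is the spherical case (1), where uniformization alone is insufficient: on a spherical orbifold the conformal automorphism group is large, so curvature-$+1$ metrics inside one conformal class form a genuine moduli family rather than a single point, and there is no rigidity collapsing them to one model. Here I would argue algebraically instead, classifying left-invariant Cartan structures directly on $\mathfrak{su}(2)$, where such a structure is a pair of covectors constrained by the conditions of Lemma \ref{Lemma 3.3}; reducing by the residual symmetries---$SO(3)$ acting by inner automorphisms (diffeomorphism), rotation of the $(\alpha^1,\alpha^2)$-plane, and homothety---should produce a one-parameter normal form, which I would then identify with the family \eqref{sphere family} parametrized by $0<a<1$, verifying that $a=\tfrac12$ recovers the standard Cartan structure. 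The genuinely delicate step is the accompanying rigidity argument showing that \emph{every} Cartan structure on $SU(2)/\Gamma$, including non-invariant ones, is equivalent up to homothety and diffeomorphism to a member of this descended $a$-family; ruling out inequivalent exotic structures is where the real work lies.
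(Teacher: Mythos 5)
First, a point of comparison is in order: the paper contains no proof of this theorem at all --- it is quoted verbatim from Geiges--Gonzalo \cite{GG2002}, where the classification is obtained not by two-dimensional uniformization but through the correspondence between taut contact circles on $\Sigma$ and complex structures on $\Sigma\times\Sph^1$ (Hopf surfaces for quotients of $SU(2)$, properly elliptic surfaces for $\wt{SL}_2$, and so on). So your proposal is necessarily a different route and must stand on its own.

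It does not, because its foundational reduction is false. You assume that every Cartan structure on $G/\Gamma$ is, up to pullback and conformal change, the tautological (Liouville--Cartan) coframe of a metric on a closed $2$-orbifold leaf space $\Lambda$, with $\mathcal K$ its Gauss curvature. But the leaves of the foliation $\{\alpha^1=0,\alpha^2=0\}$ need not be closed, and when they are not, the leaf space is not an orbifold and there is no surface on which to run Gauss--Bonnet or solve a Liouville equation. Indeed, Section \ref{sec:6} of this very paper records (citing \cite{GG1995}, Theorem 7.4) that Liouville--Cartan structures account only for those homothety classes in which all leaves are closed, and --- worse for your argument --- that on $SU(2)/\Gamma$ with $\Gamma$ abelian the structures \eqref{sphere family} with $a\neq\frac12$ do \emph{not} arise from Liouville--Cartan structures on Riemannian surfaces. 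So the reduction fails exactly where the theorem has content. A second, related problem: for a general Cartan structure, $d\eta$ is not proportional to $\alpha^1\wedge\alpha^2$ (the paper notes that the structure equations of $\eta$ ``can be complicated'' without further hypotheses), so there is no curvature function to uniformize until one has already produced a $\mathcal K$-Cartan representative in the conformal class --- which is part of what must be proved; your conformal transformation rule is then a genuinely three-dimensional equation, not a PDE on a surface. Finally, in case (1) you concede the essential rigidity step --- that arbitrary, non-invariant Cartan structures on $SU(2)/\Gamma$ are all equivalent to the descended $a$-family --- without supplying an argument; that step is precisely where Geiges--Gonzalo need the complex-surface machinery, and no normal-form reduction of left-invariant structures on $\mathfrak{su}(2)$ can substitute for it. As it stands, your argument establishes at most that conformal classes admitting a Liouville--Cartan representative behave as claimed, which is a strict --- and in the $SU(2)$ case badly insufficient --- subcase of the theorem.
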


Let us recall that our construction of $(I,J,1)$-generalized Finsler structures given in Proposition \ref{construction prop} works for arbitrary Cartan structures, regardless they are standard or not. 

In other words, for each $\mathcal K$-Cartan structure $(\alpha^1,\alpha^2,\alpha^3)$,  on $\Sigma=G\slash\Gamma$, in any of the conformal classes described in Theorem \ref{conf classes of K-Cartan}, for a 1-form $\varphi$ on $\Sigma$, that satisfies C \ref{C1} or C \ref{C2}, there is a naturally associated $(I,J,1)$-generalized Finsler structure on $\Sigma$. Indeed, if $G$ is $\widetilde E_2$ or $\widetilde{SL}_2$, then roughly speaking all $\mathcal K$-Cartan structures are conformal to the standard Cartan structure on $\Sigma$, and therefore all are in the same time Liouville-Cartan structures arising as in the construction in Section \ref{sec:5}, where $\Gamma$ is any admissible subgroup of $G$. The same conclusion holds for $G=\Sph^3$ and $\Gamma$ non-abelian. 

We recall that if $G=\Sph^3$ and $\Gamma$ abelian, then there are $\mathcal K$-Cartan structures on $G\slash\Gamma$ which are not conformal to the standard Cartan structure. This is the case of any $\{\alpha^1,\alpha^2\}$ in the family \eqref{sphere family} obtained for any $a\in (0,1)$, $a\neq \frac{1}{2}$. These Cartan structures do not arise from Liouville-Cartan structures on Riemannian surfaces. 
However, one can easily construct  $(I,J,1)$-generalized Finsler structures in this case as follows. 

If $\Gamma$ is abelian, then $\Sigma=SU\slash \Gamma$ must be a lens space 
$L(m,m-1)$, $m\in \mathbb{N}$, and its $\mathcal K$-Cartan structures are the family \eqref{sphere family} for any $a\in (0,1)$. 

Let us briefly recall the definition of a lens space.

On the unit sphere $\Sph^3=\left\{(z^1,z^2)\in \mathbb C^2:\left|z^1\right|^2+\left|z^2\right|^2=1\right\}$, the map 
\[
(z^1,z^2)\mapsto\left(e^{i\phi^1}z^1,e^{i\phi^2}z^2\right), \qquad 0\leq \phi^1,\phi^2 < 2\pi
\]
defines an isometric action of the torus $\Sph^1 \times \Sph^1$.
\par
Now, let $p,q \in \mathbb N$ relatively prime and $1\leq p <q$ and denote by $\mathbb Z_q$ the cyclic group of order $q$. Composing the homomorphism 
\[
\mathbb Z_q \rightarrow \Sph^1 \times \Sph^1, \qquad r \mapsto \left(e^{2\pi i r/q},e^{2\pi i r p/q}\right)
\]with the above action, it follows that $\mathbb Z_q$ acts isometrically on $\Sph^3$. This action has no fixed points because $p$ and $q$ are relatively prime and moreover, the quotient $\Sph^3/\mathbb Z_q$ is a manifold, denoted $L(p,q)$, and called \textit{lens space}.

Intuitively speaking, a lens space is a $3$--dimensional manifold obtained by gluing two solid tori together along their boundaries.

On $L(m,m-1)$ all $\mathcal K$-Cartan structures, up to homothety and diffeomorphism, are obtained by descending to $L(m,m-1)$ the family  \eqref{sphere family}  living on $\Sph^3$. 

With these notations it can be seen that $(\alpha^1,\alpha^2,\alpha^3)$ is a $\mathcal K$--Cartan structure with
\begin{equation}\label{alpha_{3}, K}
\begin{split}
& \alpha^3=\iota^\ast\left[\frac{1}{ai}\overline{z}_1dz_1+\frac{1}{(1-a)i}\overline{z}_2dz_2\right]\\
& \mathcal K=\frac{1}{8a(1-a)\iota^\ast \left[az_1\overline{z}_1+(1-a)z_2\overline{z}_2\right]}>0.
\end{split}
\end{equation}

We remark that $\mathcal K$ and $\alpha^{3}$ are real 0- and 1-forms on $\Sph^{3}$ written in complex coordinates, respectively. One can easily see that $\mathcal K= 1$ only if $a=1/2$.

Therefore, starting with any $\mathcal K$-Cartan structure in the family \eqref{sphere family}, we can construct $(I,J,1)$-generalized Finsler structures on the lens space $L(m,m-1)$, $m\in \mathbb{N}$, making use of the technique developed in Section \ref{sec:5.1}. Indeed, the simplest way to do this is to take the coframe
\begin{equation}\label{new coframe}
\begin{split}
&\omega^1=\sqrt{\mathcal K}\ \alpha^1\\
&\omega^2=-\alpha^3\\
&\omega^3=\sqrt{\mathcal K}\ \alpha^2,
\end{split}
\end{equation}
where $\{\alpha^{1},\alpha^{2}\}$ is given by \eqref{sphere family} and $\alpha^{3}$, $\mathcal K$ by \eqref{alpha_{3}, K}.

It can be easily seen that $\{\omega^1,\omega^2,\omega^3\}$ is a non-trivial $(I,J,1)$-generalized Finsler structures with the structure functions
\begin{equation*}
I=\frac{1}{2\mathcal K\sqrt{\mathcal K}}\mathcal K_{\alpha 2},\qquad J=-\frac{1}{2\mathcal K\sqrt{\mathcal K}}\mathcal K_{\alpha 1}.
\end{equation*}
provided $\mathcal K\neq 1$, i.e. $a\neq\frac{1}{2}$.

More sophisticated constructions are also possible by taking a $\Gamma$-invariant 1-form $\phi$ on $\Sigma$, but the details of such constructions are too complicated for the purpose of this paper. 

There is another interesting fact about the $(I,J,1)$-generalized Finsler structures \eqref{new coframe} constructed above.  

If one denotes by $Diff(\Sigma)$ the diffeomorphism group of $\Sigma$, and by $C(\Sigma)$ the space of homothety classes of Cartan structures on $\Sigma$, then the {\it moduli space} of Cartan structures is  given by $\mathcal{M}(\Sigma)=C(\Sigma)\slash Diff(\Sigma)$ (see \cite{GG2002}). One can easily see that there is a 1-to-1  correspondence between $\mathcal K$-Cartan structures and the $(I,J,1)$-generalized Finsler structures \eqref{new coframe} up to diffeomorphism and conformal equivalence. 

Indeed, let us give a definition. An $(I,J,1)$-generalized Finsler structure $\omega=(\omega^{1}, \omega^{2}, \omega^{3})$ on a closed 3-manifold $\Sigma=SU(2)\slash \Gamma$ is called {\it $\mathcal K$-induced} if there exists a $\mathcal K$-Cartan structure $\alpha=(\alpha^{1}, \alpha^{2}, \alpha^{3})$, $\mathcal K>0$, such that \eqref{new coframe} holds good. The structure functions $I$, $J$ are obtained from the relation
\begin{equation}
I\alpha^{1}+J\alpha^{2}=\frac{1}{\sqrt{K}}*d(\frac{1}{\sqrt{K}}).
\end{equation}

We will introduce an equivalence relation of $\mathcal K$-induced $(I,J,1)$-generalized Finsler structures as follows. If $\omega$ and $\tilde{\omega}$ are two such structures, then $\omega\sim\tilde\omega$ if and only if the corresponding $\mathcal K>0$-Cartan structures $\alpha$ and $\tilde\alpha$ are conformal equivalent, i.e. there exists a function $v>0$ on $\Sigma=SU(2)\slash\Gamma$ such that 
$(\tilde\alpha^{1},\tilde{\alpha}^{2})=v(\alpha^{1},\alpha^{2})$. We denote by $C_{GFS}(\Sigma)$ the space of equivalence classes of $\mathcal K$-induced $(I,J,1)$-generalized Finsler structures on $\Sigma$ and consider the moduli space of such structures $\mathcal M_{GFS}(\Sigma)=C_{GFS}(\Sigma)/Diff(\Sigma)$.

Then the mapping
\begin{equation}
\Phi:\mathcal M_{GFS}(\Sigma)\to \mathcal M(\Sigma),\quad [\omega]\mapsto [\alpha]_{conf}
\end{equation}
associates to each $\bar\omega\in[\omega]$ the corresponding conformal $\mathcal K$-Cartan structure $\alpha$ from the conformal class  $[\alpha]_{conf}$ as described above. One can see that this is a 1-to-1 correspondence that does not depend on the choice of representatives of the equivalence classes. It follows that we can identify the moduli spaces $\mathcal M_{GFS}(\Sigma)$ and $\mathcal M(\Sigma)$ and from \cite{GG1995}, Theorem 7.4 we obtain the following

\begin{theorem}
\begin{enumerate}
\item If $G=SU(2)$ and $\Gamma$ is a non-abelian discrete subgroup, then $\mathcal{M}_{GFS}(\Sigma,\omega)$ consists of a single point. 
\item  If  $G=SU(2)$ and $\Gamma$ is abelian, then $\Sigma$ must be a lens space $L(m,m-1)$, $m\in \mathbb{N}$, and the moduli space is 
\begin{equation*}
\mathcal{M}_{GFS}(L(m,m-1),\omega)  = \{a\in \mathbb{C}:0<\rm{Re}(a)<1\}\slash(a\sim 1-a).
\end{equation*}
\end{enumerate}
\end{theorem}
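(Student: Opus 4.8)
The plan is to reduce the entire statement to the identification of moduli spaces established immediately before it, together with the Geiges--Gonzalo computation of the moduli space of Cartan structures. Recall that we constructed a bijection $\Phi:\mathcal{M}_{GFS}(\Sigma)\to\mathcal{M}(\Sigma)$ sending the class of a $\mathcal{K}$-induced $(I,J,1)$-generalized Finsler structure $[\omega]$ to the conformal class of the $\mathcal{K}$-Cartan structure from which it arises via \eqref{new coframe}, and that $\Phi$ is compatible with the $Diff(\Sigma)$-action. Since $\Phi$ is a bijection identifying $\mathcal{M}_{GFS}(\Sigma)$ with $\mathcal{M}(\Sigma)=C(\Sigma)\slash Diff(\Sigma)$, it suffices to compute $\mathcal{M}(\Sigma)$ in each of the two cases and then transport the answer through $\Phi$.

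For part (1), where $\Gamma$ is non-abelian, I would use the fact recalled earlier that on $SU(2)\slash\Gamma$ with $\Gamma$ non-abelian every $\mathcal{K}$-Cartan structure is conformal to the standard one. Hence $\mathcal{M}(\Sigma)$ reduces to a single point, and applying $\Phi^{-1}$ shows $\mathcal{M}_{GFS}(\Sigma,\omega)$ is a single point as well. This part is essentially formal once $\Phi$ is in hand.

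For part (2), where $\Gamma$ is abelian, the classification forces $\Sigma\cong L(m,m-1)$, and every $\mathcal{K}$-Cartan structure, up to homothety and diffeomorphism, descends from the family \eqref{sphere family}. I would then invoke \cite{GG1995}, Theorem 7.4, which computes the moduli space of these structures as $\{a\in\mathbb{C}:0<\mathrm{Re}(a)<1\}$ modulo the identification $a\sim 1-a$; the complex parameter encodes the homothety freedom within the family, while the relation $a\sim 1-a$ is realized by the diffeomorphism induced by the coordinate swap $z_1\leftrightarrow z_2$ of $\Sph^3$, which descends to $L(m,m-1)$ and interchanges the structures with parameters $a$ and $1-a$ in \eqref{sphere family}. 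Transporting this through $\Phi$ yields the stated formula for $\mathcal{M}_{GFS}(L(m,m-1),\omega)$.

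The main obstacle I expect lies in part (2): one must verify both that the swap $z_1\leftrightarrow z_2$ genuinely descends to a diffeomorphism of $L(m,m-1)$ and that it is the only diffeomorphism acting non-trivially on the parameter space, so that no identifications beyond $a\sim 1-a$ occur. This is precisely the content carried by \cite{GG1995}, Theorem 7.4, on which the argument rests; the remaining work is the routine check that $\Phi$ matches the two moduli spaces point by point and respects the respective equivalence relations.
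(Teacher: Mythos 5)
Your proposal is correct and follows essentially the same route as the paper: the paper obtains this theorem precisely by combining the identification $\Phi:\mathcal M_{GFS}(\Sigma)\to \mathcal M(\Sigma)$ established immediately before the statement with Theorem 7.4 of \cite{GG1995}, which computes the moduli space of Cartan structures on left quotients of $SU(2)$. The additional details you sketch (conformality to the standard structure in the non-abelian case, the swap $z_1\leftrightarrow z_2$ realizing $a\sim 1-a$, and the absence of further identifications) are exactly the content the paper delegates to that citation.
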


\begin{remark}
We remark that our construction of the induced  $(I,J,1)$-generalized Finsler structures from taut contact circles in Proposition \ref{prop: correspondence} is quite naive, but this is enough for proving Theorem  \ref{thm: GFS classif}. More sophisticated constructions, for example linear combinations of the 1-forms $\alpha$ can be imagined. These can lead to other special generalized Finsler structures. A simple example related to some previous work (\cite{SSS2010}, \cite{SSS2011}) is presented in the Appendix.
\end{remark}

\section{Appendix}
Let us recall that a {\it generalized Landsberg structure} on a 3-manifold $\Sigma$ is an $(I,0,K)$-generalized Finsler structure that satisfies the structure equations \eqref{GFS} for $J=0$.

If $(\Sigma, \omega^1,\omega^2,\omega^3)$ is such a structure, we consider the set of 1-forms $(\alpha^1,\alpha^2)$ given by
\begin{equation}\label{alpha1, alpha2}
\alpha^{1}:=\omega^{2},\quad \alpha^{2}:=m\omega^{3},
\end{equation}
where $m$ is a smooth function on the 3-manifold $\Sigma$.



\begin{proposition}\label{GLS}
 If $(\Sigma,\omega)$ is an $(I,0,K)$-generalized Finsler structure, then the pair $(\alpha^1,\alpha^2)$ is a Cartan structure on $\Sigma$ if and only if the function m satisfies the conditions:
\begin{enumerate}
\item $m$ is nowhere vanishing on $\Sigma$,
\item $m_1=0$,
\item $m^2=\frac{1}{K}$,
\end{enumerate}
where we write $dm=m_1\omega^1+m_2\omega^2+m_3\omega^3$.
\end{proposition}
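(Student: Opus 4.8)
The plan is to compute the exterior derivatives $d\alpha^1$ and $d\alpha^2$ directly from the structure equations \eqref{finsler_struct_eq} with $J=0$, and then to test the three defining conditions of a Cartan structure by wedging against the coframe and reading off the coefficients relative to the volume form $\Omega=\omega^1\wedge\omega^2\wedge\omega^3\neq 0$. Writing $dm=m_1\omega^1+m_2\omega^2+m_3\omega^3$, the two derivatives I need are $d\alpha^1=d\omega^2=-\omega^1\wedge\omega^3$ and, using $d\omega^3=K\,\omega^1\wedge\omega^2$,
\begin{equation*}
d\alpha^2=dm\wedge\omega^3+m\,d\omega^3=m_1\,\omega^1\wedge\omega^3+m_2\,\omega^2\wedge\omega^3+mK\,\omega^1\wedge\omega^2.
\end{equation*}

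Next I would evaluate the four triple wedge products entering the Cartan conditions, keeping only the terms that do not repeat a factor. This gives
\begin{align*}
\alpha^1\wedge d\alpha^1 &= \omega^2\wedge(-\omega^1\wedge\omega^3)=\Omega,\\
\alpha^2\wedge d\alpha^2 &= m\omega^3\wedge(mK\,\omega^1\wedge\omega^2)=m^2K\,\Omega,\\
\alpha^1\wedge d\alpha^2 &= \omega^2\wedge(m_1\,\omega^1\wedge\omega^3)=-m_1\,\Omega,\\
\alpha^2\wedge d\alpha^1 &= m\omega^3\wedge(-\omega^1\wedge\omega^3)=0.
\end{align*}
Only the $mK\,\omega^1\wedge\omega^2$ term of $d\alpha^2$ survives the wedge with $\alpha^2=m\omega^3$, and only the $m_1\,\omega^1\wedge\omega^3$ term survives the wedge with $\alpha^1=\omega^2$; the last line vanishes identically because the factor $\omega^3$ is repeated.

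With these four identities in hand the equivalence is immediate. The condition $\alpha^2\wedge d\alpha^1=0$ holds automatically, so $(\alpha^1,\alpha^2)$ is a Cartan structure precisely when $\alpha^1\wedge d\alpha^1=\alpha^2\wedge d\alpha^2\neq 0$ and $\alpha^1\wedge d\alpha^2=0$. Since $\Omega\neq 0$, the first requirement reads $m^2K=1$, and the nonvanishing is then automatic; the second reads $m_1=0$. Finally, $m^2K=1$ is equivalent to $m^2=1/K$ together with the requirement that $m$ (hence $K$) be nowhere zero, which are exactly conditions (1) and (3), while $m_1=0$ is condition (2). As this is a pure computation there is no serious obstacle; the only points requiring care are the sign bookkeeping in the triple wedge products and the observation that the single relation $m^2K=1$ simultaneously encodes both the nonvanishing of $m$ and the formula $m^2=1/K$.
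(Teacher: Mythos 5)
Your computation is correct and complete: the four triple wedge products are evaluated with the right signs, the condition $\alpha^2\wedge d\alpha^1=0$ is indeed automatic, and the equivalence with $m_1=0$ and $m^2K=1$ (hence $m$ nowhere zero and $m^2=1/K$) follows exactly as you state. This is the same direct computation the paper has in mind — its own proof is simply the remark that the claim is ``purely computational'' — so your argument supplies precisely the details the paper omits.
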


\begin{proof} The proof is purely computational. 
\end{proof}

Let us remark that the conditions in Proposition \ref{GLS} impose some restrictions on the generalized 
Landsberg structure $(\Sigma, \omega^1,\omega^2,\omega^3)$ as well. Indeed, we must have
\begin{equation*}
K>0\qquad K_1=0
\end{equation*}
everywhere on $\Sigma$. 

It can now be easily seen that an $(I,0,K)$-generalized Finsler structure satisfying the conditions above naturally induces a $\mathcal K$-Cartan structure. We have 

\begin{proposition}
Let $\Sigma$ be a closed 3-manifold and let $(\Sigma,\omega)$ be an $(I,0,K)$-generalized Finsler structure such that $K>0$, $K_{1}=0$. Then the coframe
\begin{equation}
\begin{pmatrix}
\alpha^{1}\\ \alpha^{2}\\ \alpha^{3}
\end{pmatrix}=
\begin{pmatrix}
0 & 1 & 0 \\
0 & 0 & \frac{1}{\sqrt{K}}\\
\sqrt{K} & 0 & -\frac{1}{2K\sqrt{K}}K_{2}
\end{pmatrix}
\begin{pmatrix}
\omega^{1}\\ \omega^{2}\\ \omega^{3}
\end{pmatrix}
\end{equation}
is a $\mathcal K$-Cartan structure with structure constant
\begin{equation}
\mathcal K=K-\frac{3}{4}\frac{1}{K}\Bigl(\frac{1}{K}(K_{2})^{2}-\frac{2}{3}K_{22}   \Bigr),
\end{equation}
where the subscripts are directional derivatives with respect to the coframe $\omega=(\omega^{1},\omega^{2},\omega^{3})$.
\end{proposition}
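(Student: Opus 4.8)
The plan is to verify the three structure equations \eqref{K_Cartan_structure} of a $\mathcal K$-Cartan structure directly in the coframe $\omega$, using the structure equations \eqref{finsler_struct_eq} with $J=0$, the Bianchi identities \eqref{GFS_Bianchi}, and the Ricci identities, and then to read off $\mathcal K$. Two facts will be used repeatedly: since $K_1=0$ one has $dK=K_2\omega^2+K_3\omega^3$, and the Bianchi identity \eqref{GFS_Bianchi} with $J=0$ collapses to $K_3+KI=0$.

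First I would settle the first two equations of \eqref{K_Cartan_structure}. Put $m:=1/\sqrt K$; the hypotheses $K>0$ and $K_1=0$ make $m$ nowhere vanishing with $m_1=0$ and $m^2=1/K$, so Proposition \ref{GLS} applies and $(\alpha^1,\alpha^2)=(\omega^2,m\omega^3)$ is a Cartan structure. By Lemma \ref{Lemma 3.3} there is a unique $1$-form $\eta$ with $d\alpha^1=\alpha^2\wedge\eta$ and $d\alpha^2=\eta\wedge\alpha^1$. Computing $d\alpha^1=d\omega^2=-\omega^1\wedge\omega^3$ and $d\alpha^2=m_2\,\omega^2\wedge\omega^3+mK\,\omega^1\wedge\omega^2$, expanding $\eta$ in the coframe $\omega$, and matching coefficients determines $\eta$; this produces the third member $\alpha^3$ of the coframe and establishes the first two equations of \eqref{K_Cartan_structure}.

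The substantive step is computing $d\alpha^3=d\eta$ and extracting $\mathcal K$. Applying the Leibniz rule to $\alpha^3$ and substituting the structure equations for $d\omega^1$ and $d\omega^3$, together with the expansions of $dK$ and $dK_2$, produces an expression that a priori carries all three $2$-forms $\omega^1\wedge\omega^2$, $\omega^1\wedge\omega^3$ and $\omega^2\wedge\omega^3$; since $\mathcal K\,\alpha^1\wedge\alpha^2=\tfrac{\mathcal K}{\sqrt K}\,\omega^2\wedge\omega^3$ has only the last, the first task is to show the other two components cancel. The $\omega^1\wedge\omega^2$ part cancels directly against the contribution of $K\,\omega^1\wedge\omega^2$ coming from $d\omega^3$. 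The $\omega^1\wedge\omega^3$ part is the delicate one: it vanishes only after using $K_3=-KI$ together with the Ricci identity $K_{21}-K_{12}=-K\,K_3$, which, combined with $K_{12}=(K_1)_2=0$, yields $K_{21}=K^2I$. Multiplying the surviving $\omega^2\wedge\omega^3$ coefficient by $\sqrt K$ then gives $\mathcal K=K-\tfrac34 K^{-2}(K_2)^2+\tfrac12 K^{-1}K_{22}$, which is the stated formula after factoring $\tfrac{3}{4K}$ out of the last two terms.

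I expect the main obstacle to be the bookkeeping of the second-order directional derivatives of $K$ in $d\eta$, and in particular the correct use of the commutation (Ricci) identity to annihilate the $\omega^1\wedge\omega^3$ component; once that cancellation is secured, the value of $\mathcal K$ is obtained by simply reading off the $\omega^2\wedge\omega^3$ coefficient.
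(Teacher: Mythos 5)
Your proposal is correct and is essentially the paper's own proof: the paper likewise combines Proposition \ref{GLS} (with $m=1/\sqrt{K}$) and Lemma \ref{Lemma 3.3}, determines the third coframe element by coefficient matching (obtaining $\alpha^{3}=\frac{1}{m}\omega^{1}-m_{2}\omega^{3}$), computes $d\alpha^{3}$, and reads off $\mathcal K$ --- it merely keeps the bookkeeping in the variable $m$ and substitutes back $K$ at the very end, whereas you work with $K$ throughout and spell out the Bianchi/Ricci cancellations that the paper compresses into ``a straightforward computation shows'' and ``by exterior derivation it follows''. One remark worth recording: carrying out the coefficient matching actually yields $\alpha^{3}=\sqrt{K}\,\omega^{1}+\frac{1}{2K\sqrt{K}}K_{2}\,\omega^{3}$ (since $-m_{2}=+\frac{1}{2}K^{-3/2}K_{2}$), so the minus sign in the $(3,3)$ entry of the matrix in the statement is a typo in the paper; the stated formula for $\mathcal K$ corresponds to this corrected sign, and with it both your argument and the paper's go through.
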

\begin{proof}
Since $(\alpha^{1},\alpha^{2})$ given by \eqref{alpha1, alpha2} is a Cartan structure, 
Lemma \ref{Lemma 3.3} implies that there exists a unique 1-form $\alpha^{3}$ such that 
\begin{equation}\label{d}
\begin{split}
&d\alpha^1=\alpha^2\wedge\alpha^{3}\\
&d\alpha^2=\alpha^{3}\wedge\alpha^1.
\end{split}
\end{equation}

A straightforward computation shows that 
\begin{equation}
\alpha^{3}=\frac{1}{m}\omega^1-m_2\omega^3.
\end{equation}

One can see that $\alpha^1\wedge\alpha^2\wedge\alpha^{3}=\omega^1\wedge\omega^2\wedge\omega^3\neq 0$, therefore $(\alpha^1,\alpha^2,\alpha^{3})$ is also a coframe on $\Sigma$.

By exterior derivation it follows
\begin{equation}
d\alpha^{3}=\frac{1}{m}\Bigl(\frac{1}{m}-m_{22} \Bigr)\alpha^1\wedge\alpha^2
\end{equation}

and therefore we can conclude that $(\alpha^1,\alpha^2,\alpha^{3})$ is a $\mathcal K$--Cartan structure with 
\begin{equation}
\mathcal K=\frac{1}{m}\Bigl(\frac{1}{m}-m_{22} \Bigr).
\end{equation}
Using now the conditions on $m$ found in Proposition \ref{GLS} the conclusion follows.

$\qedd$
\end{proof}

\begin{remark}
Do not confound the structure function $\mathcal K$ of the Cartan structure $\alpha$ with the structure function $K$ of the generalized Finsler structure $\omega$. 
\end{remark}

Conversely, by similar computations as above, we obtain
\begin{proposition}
Let $(\Sigma,\alpha)$ be a $\mathcal K$-Cartan structure on the closed 3-manifold $\Sigma$ and 
let $m:\Sigma\to \R\setminus\{0\}$ a smooth function that satisfies the relations
\begin{equation}\label{directional PDE in m}
\begin{split}
& m_{\alpha 3}=0\\
& m_{\alpha 11}=\frac{1}{m}-m\mathcal K\\
& m_{\alpha 12}=0,
\end{split}
\end{equation}
where the subscripts represent directional derivatives with respect to the coframe $\alpha=(\alpha^{1},\alpha^{2},\alpha^{3})$.
Then the coframe
\begin{equation}
\begin{pmatrix}
\omega^{1} \\ \omega^{2} \\ \omega^{3}
\end{pmatrix}=
\begin{pmatrix}
0 & m_{\alpha 1} & m\\
1 & 0 & 0 \\
0 & \frac{1}{m} & 0
\end{pmatrix}
\begin{pmatrix}
\alpha^{1} \\ \alpha^{2} \\ \alpha^{3}
\end{pmatrix}
\end{equation}
is an $(I,0,K)$-generalized Finsler structure on $\Sigma$ with the structure functions 
\begin{equation}
I=2m_{\alpha 2},\qquad K=\frac{1}{m^{2}}.
\end{equation}
\end{proposition}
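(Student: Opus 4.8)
The plan is to verify the three structure equations \eqref{finsler_struct_eq} with $J=0$ directly by exterior differentiation of the defining coframe, reading off $I$, $J$ and $K$ from the coefficients that appear. First I would record the $\mathcal K$-Cartan structure equations $d\alpha^1=\alpha^2\wedge\alpha^3$, $d\alpha^2=\alpha^3\wedge\alpha^1$, $d\alpha^3=\mathcal K\,\alpha^1\wedge\alpha^2$, and invert the defining linear system to get $\alpha^1=\omega^2$, $\alpha^2=m\,\omega^3$, $\alpha^3=\tfrac1m\omega^1-m_{\alpha1}\omega^3$. The transformation matrix has determinant $1$ and $m$ is nowhere zero, so $(\omega^1,\omega^2,\omega^3)$ is genuinely a coframe. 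From the inverted relations one builds the wedge-product dictionary $\alpha^2\wedge\alpha^3=-\omega^1\wedge\omega^3$, $\alpha^1\wedge\alpha^2=m\,\omega^2\wedge\omega^3$ and $\alpha^1\wedge\alpha^3=-\tfrac1m\omega^1\wedge\omega^2-m_{\alpha1}\,\omega^2\wedge\omega^3$, which are the only substitutions needed at the end.

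The two easy equations come first. Since $\omega^2=\alpha^1$, we get $d\omega^2=d\alpha^1=\alpha^2\wedge\alpha^3=-\omega^1\wedge\omega^3$, which is exactly the required middle equation. For $\omega^3=\tfrac1m\alpha^2$ I would expand $d\omega^3=d(\tfrac1m)\wedge\alpha^2+\tfrac1m\,d\alpha^2$, use $m_{\alpha3}=0$ to drop the $\alpha^3$-component of $dm$, and substitute via the dictionary; the two $\omega^2\wedge\omega^3$ contributions cancel and one is left with $d\omega^3=\tfrac1{m^2}\omega^1\wedge\omega^2$. This already identifies $K=1/m^2$ and shows that the $\omega^1\wedge\omega^3$ term is absent, i.e. $J=0$.

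The real work is the equation for $\omega^1=m_{\alpha1}\alpha^2+m\alpha^3$. Differentiation produces the four terms $d(m_{\alpha1})\wedge\alpha^2$, $m_{\alpha1}\,d\alpha^2$, $dm\wedge\alpha^3$ and $m\,d\alpha^3$. Here the hypotheses enter decisively: $m_{\alpha11}=\tfrac1m-m\mathcal K$ and $m_{\alpha12}=0$ control two components of $d(m_{\alpha1})$, while the remaining component $m_{\alpha13}$ must be handled through the Ricci identities of the $\mathcal K$-Cartan coframe (obtained from $d^2=0$ exactly as the identities displayed in Section~\ref{sec: FinslerStructures}, with $K$ replaced by $\mathcal K$). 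Combining $m_{\alpha3}=0$, which gives $m_{\alpha31}=0$, with the identity $m_{\alpha31}-m_{\alpha13}=m_{\alpha2}$ yields $m_{\alpha13}=-m_{\alpha2}$. This is the step I expect to be the main obstacle, and the place where the condition $m_{\alpha3}=0$ is genuinely used.

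With $m_{\alpha13}=-m_{\alpha2}$ in hand, the two $\alpha^1\wedge\alpha^3$ terms (from $m_{\alpha1}\,d\alpha^2$ and from $dm\wedge\alpha^3$) cancel, the $\mathcal K$-dependent pieces coming from $d(m_{\alpha1})\wedge\alpha^2$ and from $m\,d\alpha^3$ cancel, and the two surviving $\alpha^2\wedge\alpha^3$ contributions add to $2m_{\alpha2}$, so that $d\omega^1=\tfrac1m\alpha^1\wedge\alpha^2+2m_{\alpha2}\,\alpha^2\wedge\alpha^3$. Applying the wedge-product dictionary converts this to $d\omega^1=\omega^2\wedge\omega^3-2m_{\alpha2}\,\omega^1\wedge\omega^3$, whence $I=2m_{\alpha2}$, the factor $2$ being precisely the sum of the two $\alpha^2\wedge\alpha^3$ terms. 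Together with the computations of $d\omega^2$ and $d\omega^3$ above, this verifies \eqref{finsler_struct_eq} with $(I,J,K)=\bigl(2m_{\alpha2},\,0,\,1/m^2\bigr)$, completing the proof.
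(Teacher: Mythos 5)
Your proof is correct and is exactly the argument the paper intends: the paper omits the converse computation with the phrase ``by similar computations as above,'' meaning direct exterior differentiation of the coframe and comparison with the structure equations \eqref{finsler_struct_eq}, which is precisely what you carry out. Your key step---obtaining $m_{\alpha 13}=-m_{\alpha 2}$ by combining $m_{\alpha 3}=0$ with the Ricci identity $m_{\alpha 31}-m_{\alpha 13}=m_{\alpha 2}$ of the $(0,0,\mathcal K)$-structure---is the right way to close the computation, and all the cancellations you record (the $\alpha^1\wedge\alpha^3$ terms, the $\mathcal K$-dependent terms, and the doubling of the $\alpha^2\wedge\alpha^3$ contributions giving $I=2m_{\alpha 2}$) check out.
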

\begin{remark}
The  involutivity of the directional PDE \eqref{directional PDE in m} can be studied by means of Cartan-K\"ahler theory. We have done such a study for a similar directional PDE in \cite{SSS2010} and construct explicit solutions in \cite{SSS2011}.
\end{remark}


Since any Cartan structure is a taut contact circle, from Theorem \ref{thm: classification} we obtain

\begin{theorem}
If $(\Sigma, \omega)$ is an $(I,0,K)$-generalized Finsler structure on a closed 3-manifold $\Sigma$
satisfying the conditions $K>0$, $K_1=0$, then $\Sigma$ is diffeomorphic to $G\slash\Gamma$, where $G$ and 
$\Gamma$ are as in Theorem \ref{thm: classification}.
\end{theorem}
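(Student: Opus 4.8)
The plan is to exhibit a taut contact circle on $\Sigma$ and then invoke Theorem \ref{thm: classification}. The hypotheses $K>0$ and $K_1=0$ are tailored precisely to the requirements of Proposition \ref{GLS}, so the whole argument reduces to making the single natural choice of the auxiliary function $m$ and checking that it meets the three conditions listed there.

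First I would set $m:=1/\sqrt{K}$. Since $K>0$ everywhere on the closed manifold $\Sigma$, the function $\sqrt{K}$ is smooth and strictly positive, so $m$ is a well-defined, smooth, nowhere-vanishing function; this gives condition (1) of Proposition \ref{GLS}, and condition (3), $m^2=1/K$, holds by construction. For condition (2), differentiating gives $dm=-\tfrac{1}{2}K^{-3/2}\,dK$, whence $m_1=-\tfrac{1}{2}K^{-3/2}K_1=0$ because $K_1=0$ by hypothesis. Thus all three conditions are satisfied.

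With this $m$, Proposition \ref{GLS} guarantees that the pair $(\alpha^1,\alpha^2)=(\omega^2,m\omega^3)$ defined by \eqref{alpha1, alpha2} is a Cartan structure on $\Sigma$. As already observed in the proof of Theorem \ref{thm: GFS classif}, every Cartan structure is in particular a taut contact circle: the two defining identities $\alpha^1\wedge d\alpha^2=0$ and $\alpha^2\wedge d\alpha^1=0$ add up to the tautness relation $\alpha^1\wedge d\alpha^2+\alpha^2\wedge d\alpha^1=0$, while $\alpha^1\wedge d\alpha^1=\alpha^2\wedge d\alpha^2\neq 0$ is common to both notions. Hence $\Sigma$ is a closed 3-manifold admitting a taut contact circle, and Theorem \ref{thm: classification} immediately yields that $\Sigma$ is diffeomorphic to a quotient $G\slash\Gamma$ with $G$ and $\Gamma$ as stated.

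I do not expect any serious obstacle: once Proposition \ref{GLS} is in place the entire content is the observation that $K>0$ permits a global single-valued square root and $K_1=0$ kills the obstructing directional derivative. The only point requiring a word of care is the global existence of $\sqrt{K}$ as a smooth function, which is immediate from strict positivity, so no monodromy or orientation issues arise.
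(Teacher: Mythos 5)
Your proof is correct and follows essentially the same route as the paper: choose $m=1/\sqrt{K}$ (which the paper's hypotheses $K>0$, $K_1=0$ make admissible in Proposition \ref{GLS}), obtain the Cartan structure $(\omega^2, m\omega^3)$, observe that every Cartan structure is a taut contact circle, and conclude via Theorem \ref{thm: classification}. The only cosmetic difference is that the paper packages the choice of $m$ into an intermediate proposition exhibiting the full $\mathcal K$-Cartan coframe $(\alpha^1,\alpha^2,\alpha^3)$, whereas you verify the three conditions of Proposition \ref{GLS} directly, which suffices for the stated theorem.
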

Finally, let us point out that this construction doesn't lead to a classical Landsberg structure by the reasons presented in \cite{SSS2010} and \cite{SSS2011}.


\bigskip

\medskip

\begin{center}

Sorin V. SABAU\\

School of Science, Department of Mathematics\\
Tokai University,\\
Sapporo, 
005\,--\,8601 Japan

\medskip
{\tt sorin@tspirit.tokai-u.jp}

\bigskip

Kazuhiro SHIBUYA\\
Graduate School of Science, Hiroshima University, \\
Higashi Hiroshima, 739\,--\,8521, Japan

\medskip
{\tt shibuya@hiroshima-u.ac.jp}

\bigskip
Gheorghe Piti\c s\\
Department of Mathematics and Informatics\\
University Transilvania of Bra\c sov, \\
Bra\c sov, Romania

\medskip
{\tt gh.pitis@unitbv.ro}

\end{center}


\begin{thebibliography}{MMMM}



\bibitem{BCS2000}
     D.~Bao, S. S.~Chern and Z.~Shen,
     An Introduction to Riemann Finsler Geometry, Springer, GTM \textbf{200}, 
2000.

\bibitem{Be}
A. L. Besse, 
Manifolds All of Whose Geodesics are Closed, Springer, 1978.

\bibitem{Br1996}
	       R. {Bryant},
{\it Finsler structures on the 2-sphere satisfying $K=1$}, Finsler Geometry,
Contemporary Mathematics {\bf 196} (1996), 27--41.



\bibitem{Br2002}
 R. {Bryant},
{\it Some remarks on Finsler manifolds with constant
flag curvature}, Houston Journal of Mathematics, {\bf 28}, no.2,
(2002), 221--262.

\bibitem{GG1995}
		H. {Geiges} and J.  {Gonzalo},
{\it Contact Geometry and Complex Surfaces},
Invent. Math., {\bf 121}(1995), 147--209.


\bibitem{GG1997}
		H. {Geiges} and J.  {Gonzalo},
{\it Contact circles on 3-manifolds},
J. Diff. Geom., {\bf 46} (1997), 236--286.

\bibitem{GG2002}
H. {Geiges} and J.  {Gonzalo},
{\it Moduli of contact circles},
J. Reine Angew. Math., {\bf 551}(2002), 41--85. 

\bibitem{G1987}
J. {Gonzalo}, {\it Branched covers and contact structures}, Proc. Amer.
Math. Soc., {\bf 101}(1987), 347--352.

\bibitem{IL2003}
Th. A. {Ivey} and L. M. {Landsberg},
{\it Cartan for Beginners; Differential Geometry via Moving Frames and
		 Exterior Differential systems}, AMS, GSM 61, 2003.

\bibitem{SSS2010}
S. V. {Sabau},  K. {Shibuya} and  H. {Shimada},
{\it On the existence of generalized unicorns on surfaces}, Diff. Geom. and its Appl., {\bf 28} (2010), 406--435.

\bibitem{SSS2011}
S. V. {Sabau},  K. {Shibuya} and H. {Shimada},
{\it Moving frames on generalized Finsler structures}, arXiv:1110.5115v1 [math.DG], 2011.

\bibitem{S}
P. Scott, 
{\it The geometries of 3-manifolds}, Bull. London Math. Soc., {\bf 15} (1983), 401--487.

\bibitem{W}
J. A. Wolf, 
{\it Spaces of Constant Curvature}, Fifth Edition, Publish or Perish, 1984.

\end{thebibliography}
\end{document}